\numberwithin{equation}{section}
\newtheorem{theorem}{Theorem}[section]
\newtheorem{lemma}[theorem]{Lemma}
\newtheorem{corollary}[theorem]{Corollary}
\newtheorem{proposition}[theorem]{Proposition}
\theoremstyle{definition}
\newtheorem{remark}[theorem]{Remark}
\theoremstyle{remark}
\newcommand{\ao}{\vec{A}}
\newcommand{\bo}{\vec{B}}
\newcommand{\bx}{\vec{\mathcal{X}}}
\newcommand{\xo}{\vec{X}}
\newcommand{\yo}{\vec{Y}}
\date{}
\title{\bf  A~unified approach to compatibility theorems on invertible interpolated operators}
\author{I.~Asekritova, N.~Kruglyak and M.~Masty{\l}o}
\begin{document}
\maketitle

\noindent
\renewcommand{\thefootnote}{\fnsymbol{footnote}}
\footnotetext{2010 \emph{Mathematics Subject Classification}: Primary 46B70, Secondary 47A13.} \footnotetext{\emph{Key words and phrases}:
Interpolation functor, the complex interpolation method, the real interpolation method, uniqueness-of-inverses property, spectrum of interpolated operators.}
\footnotetext{The third author was supported by the National Science Centre, Poland, Grant no. 2015/17/B/ST1/00064.}

\begin{abstract}
\noindent
We prove the stability of isomorphisms between Banach spaces  generated by interpolation methods introduced by
Cwikel--Kalton--Milman--Rochberg which includes, as special cases, the real and complex methods up to equivalence
of norms and also the so-called $\pm$ or $G_1$ and $G_2$ methods defined by Peetre and Gustavsson--Peetre.~This
result is used to show the existence of solution of certain operator analytic equation.~A~by-product of these results
is a~more general variant of the Albrecht--M\"uller result which states that the \linebreak interpolated isomorphisms
satisfy uniqueness-of-inverses between interpolation spaces. We show applications for positive operators between
Calder\'on function lattices. We also derive connections between the spectrum of interpolated operators.
\end{abstract}

\vspace{5 mm}

\section{Introduction}

In Banach space theory, operator theory plays a fundamental role. An important part of this theory is the spectral theory which
has applications in many areas of modern analysis and physics. The study of stability properties of interpolated operators
is a~central task in abstract interpolation theory. Motivated by applications in the mentioned areas of analysis, we study
stability  and the local uniqueness-of-inverse properties of interpolated isomorphisms between Banach spaces generated
by some general interpolation methods.

As usual for a given Banach space $X$ we denote by $L(X)$ the Banach space of all bounded linear operators on $X$ equipped with
the standard norm. For basic notation for interpolation theory, we refer to \cite{BL} and \cite{BK}. We shall recall that a~mapping
$F\colon \vec{\mathcal{B}} \to \mathcal{B}$ from the category $\vec{\mathcal{B}}$ of all couples of Banach spaces into the category
$\mathcal{B}$ of all Banach spaces is said to be an \emph{interpolation functor} if, for any couple $\xo:=(X_0, X_1)$, the Banach
space $F(X_0, X_1)$ is  intermediate with respect to $\xo$ (i.e., $X_0 \cap X_1 \subset F(\xo) \subset X_0 + X_1$), and $T\colon
F(X_0, X_1) \to F(Y_0, Y_1)$ for all $T\colon (X_0, X_1) \to (Y_0, Y_1)$; here as usual  the notation $T\colon (X_0, X_1) \to
(Y_0, Y_1)$ means that $T\colon X_0 + X_1 \to Y_0 + Y_1$ is a~linear operator such that the restrictions of $T$ to the space $X_j$
is a~bounded operator from $X_j$ to $Y_j$, for both $j=0$ and $j=1$. An operator $T\colon (X_0, X_1) \to (Y_0, Y_1)$
between Banach couples is said to be invertible whenever the restriction $T|_{X_j}\colon X_j \to Y_j$ is invertible (i.e., $T$ is
an isomorphism of $X_j$ onto $Y_j$) for each $j\in \{0, 1\}$.

The complex method of interpolation plays an important role in applications in various areas of modern analysis. We point out
that in the study of spectral properties of interpolated operators between complex interpolation spaces the so called
uniqueness-of-resolvent property is of particular interest.

Let $\xo=(X_0, X_1)$ be a~complex Banach couple and $T\colon (X_0, X_1) \to (X_0, X_1)$ be an operator. If $0\leq \alpha <\beta \leq 1$
and $T_{\alpha}:=T|_{[\xo]_{\alpha}}$ and $T_{\beta}:=T|_{[\xo]_{\beta}}$ are invertible, then the inverses $T_{\alpha}^{-1}$ and
$T_{\beta}^{-1}$ do not coincide on $X_0 \cap X_1$ in general. Following Zafran \cite{Zafran}, an operator $T\colon \xo \to \xo$ is
said to have the uniqueness-of-resolvent (U.R.) property if the restrictions $(T_{\alpha} - \lambda I)^{-1}|_{X_0\cap X_1}$ and
$(T_{\beta} - \lambda I)^{-1}|_{X_0\cap X_1}$ coincide  for all $\alpha$, $\beta \in [0, 1]$ and $\lambda \notin \sigma(T_{\alpha})
\cup \sigma(T_{\beta})$.

Ransford \cite{Ransford} introduced a~weaker property; an operator $T\colon \xo \to \xo$ satisfies the local uniqueness-of-resolvent
(local U.R.) condition, if for all $\alpha \in (0, 1)$ and $\lambda \notin \sigma(T_{\alpha})$, there exists a~neighbourhood
$U\subset (0, 1)$ of $\alpha$ such that $(T_{\theta} - \lambda I)^{-1}$ exists and $(T_{\theta} - \lambda I)^{-1}|_{X_0\cap X_1}$
agrees with $(T_{\alpha} - \lambda I)^{-1}|_{X_0\cap X_1}$ for all $\theta \in U$. Albrecht and M\"uller proved in \cite{AM} that
this condition is always fulfilled. This follows immediately from the following result (see \cite[Theorem 4]{AM} which states:

\vspace{2 mm}

\noindent
\emph{If $(X_0, X_1)$ is a~complex Banach couple, $T\colon (X_0, X_1) \to (X_0, X_1)$ and $T_\alpha \colon [X_0, X_1]_\alpha
\to [X_0, X_1]_\alpha$ is invertible for some $\alpha \in (0, 1)$, then there exists a~neighbourhood $U\subset (0, 1)$
of $\alpha$ such that $T_{\theta}$ is invertible and $T_{\theta}^{-1}$ agrees with $T_{\alpha}^{-1}$ on
$X_0 \cap X_1$ for any  $\theta \in U$.}

\vspace{2 mm}

Our aim is to provide a~unified general approach to abstract compatibility theorems of stronger type than Albrecht--M\"uller
result for operators between Banach spaces generated by abstract interpolation methods. To do this we introduce a~new key notion
of a~\emph{stable family} $\{F_\theta\}_{\theta \in (0, 1)}$ of interpolation functors (for an exact definition
we refer to Section 4) and prove that certain class of interpolation methods introduced by Cwikel--Kalton--Milman--Rochberg
in \cite{CKMR} are stable. In particular, we prove that the Calder\'on complex family $\{[\,\cdot\,]_\theta\}_{\theta \in
(0, 1)}$ as well as the Lions--Peetre real family $\{(\,\cdot\,)_{\theta,q}\}_{\theta, q}$ for any $1\leq q\leq \infty$ of
interpolation functors are stable families.

The fundamental theorems on a~stable family $\{F_\theta\}_{\theta \in (0, 1)}$ of interpolation functors are
the following main results of this paper true for the restrictions $T_{\theta}:= T|_{F_\theta(X_0, X_0)}$
from $F_\theta(X_0, X_1)$ to $F_\theta(Y_0, Y_1)$ of any linear bounded operator $T\colon (X_0, X_1)\rightarrow (Y_0, Y_1)$
between \linebreak Banach couples:

\noindent
\emph{If $T_{\theta_{*}}\colon F_{\theta_{*}}(X_0, X_1) \to F_{\theta_{*}}(Y_0, Y_1)$ is invertible for some $\theta_{*}\in (0, 1)$,
then there exists a~neighbourhood $U\subset (0, 1)$ of $\theta_{*}$ such that $T_{\theta}$ is invertible, the inverse $T^{-1}_{\theta}$
agrees with $T^{-1}_{\theta_{*}}$ on $Y_0 \cap Y_1$ $($i.e., $T^{-1}_{\theta}(y) = T^{-1}_{\theta_{*}}(y)$ for all $y\in Y_0 \cap Y_1)$,
and the following estimate holds{\rm:}
\begin{align*}
\|T_{\theta}^{-1}\|_{F_\theta(Y_0, Y_1) \to F_\theta(X_0, X_1)} \leq
2\,\|T_{\theta_{*}}^{-1}\|_{F_{\theta_{*}}(Y_0, Y_1) \to F_{\theta_{*}}(X_0, X_1)} \quad\, \text{for all\, $\theta \in U$}.
\end{align*}
As a consequence, the set of all $\theta \in (0, 1)$ for which $T_\theta$ is invertible is an open subset of $(0, 1)$.}

We note that in addition we describe more precisely the mentioned above neighbourhood $U\subset (0, 1)$. Moreover, under some mild 
hypothesis on a~stable family $\{F_\theta\}$, which satisfies the reiteration condition, we prove a~subtle compatibility result which 
states{\rm:}

\noindent
\emph{If $I \subset (0, 1)$ is an open interval of invertibility of \,$T$ $($i.e., such that $T_\theta$ is invertible for all
$\theta\in I)$, then for any $\theta$, $\theta' \in I$ the inverse operators $T^{-1}_{\theta}$ and $T^{-1}_{\theta'}$
agree on $F_{\theta}(Y_0, Y_1) \cap F_{\theta'}(Y_0, Y_1)$.}

Among several motivations for studying compatibility problems are important applications to PDE's. It seems the roots for these problems
are in Calder\'on paper \cite{Ca2} in which it is proved that if $(\Omega, \Sigma, \mu)$ is a~measure space and $T\colon L^p(\mu) \to
L^p(\mu)$ is a~bounded operator for $1<p<\infty$, which is invertible for $p=2$, then $T$ is also invertible when $2-\varepsilon < p <
2+ \varepsilon$, for some small $\varepsilon>0$. In fact careful analysis of Calder\'on's proofs gives the compatibility of inverses,
i.e., there exists some small $\varepsilon >0$ such that for all $p, q \in (2-\varepsilon, 2+ \varepsilon)$, the inverse $T^{-1}$
considered on the space $L^p(\mu)$ is compatible with $T^{-1}$ considered on $L^q(\mu)$ when both operators are restricted to
$L^p(\mu) \cap L^q(\mu)$. It was shown in \cite{PV} very useful application for solvability of the Dirichlet problem with data
in $L^p(\partial{\Omega})$ for the biharmonic equation $\Delta u = 0$ in $\Omega$, $u = f$ and $\partial u/\partial n  = g$ on
$\partial \Omega$, in a bounded Lipschitz domain $\Omega \subset \mathbb{R}^n$.

It is worth pointing out that in the remarkable paper \cite{KMM} by Kalton--Mayaboroda--Mitrea there are shown
applications of compatibility results for the variants of the Dirichlet problem as well as the Neumann problem for the Laplacian
in $L_p(\partial \Omega)$-spaces in the case of unbounded domain $\Omega$ above the graph of a~real-valued Lipschitz function
defined in $\mathbb{R}^{n-1}$.

We conclude by noting that using the well known technics to the mentioned above type PDE's, our compatibility results can be applied 
to other methods than the complex. In particular, applying the real method, we would get variants of the Dirichlet problem as well as
the Neumann problem for the Laplacian in Lorentz $L_{p, q}(\partial\Omega)$-spaces.

Throughout the paper we shall require considerable notation. If $X$ and $Y$ are Banach spaces such that $X\subset Y$
and  the inclusion map $\text{id}\colon X \to Y$ is bounded, then we write $X \hookrightarrow Y$. For simplicity of notation,
we write $X\cong Y$ whenever $X=Y$, with \emph{equality} of norms.

\section{Notation and preliminary results}

We introduce the basic notations and definitions to be used throughout this work. We will use complex methods of
interpolation introduced by Calder\'on in his fundamental paper \cite{Ca1}.

Let $S:=\{z\in \mathbb{C};\, 0 <\text{Re} z<1\}$ be an open strip on the plane. For a~given $\theta \in (0, 1)$
and any couple $\xo = (X_0, X_1)$ we denote by $\mathcal{F}(\xo)$ the Banach space
of all bounded continuous functions $f\colon \bar{S}\to X_0 + X_1$ on the closure $\bar{S}$ that are analytic
on $S$, and $\mathbb{R} \ni t \mapsto f(j + it) \in X_j$ is a bounded continuous function, for each $j\in \{0,1\}$,
and endowed with the norm
\[
\|f\|_{\mathcal{F}(\xo)} = \max_{j=0, 1}\,\sup_{t\in \mathbb{R}} \|f(j+it)\|_{X_j}.
\]
The lower complex interpolation space is defined by $[\xo]_{\theta} := \{f(\theta);\, f\in \mathcal{F}(\xo)\}$ and
is endowed  with the quotient norm. This definition is slightly different from those in \cite{BL,Ca1},
however it gives the same interpolation spaces (see, e.g., \cite{Ca1}). We recall that in the original definition it is
required in addition that $f\in \mathcal{F}(\xo)$ satisfies
\[
\lim_{|t| \to \infty}\|f(j+i t)\|_{X_j}=0, \quad\, j\in \{0, 1\}.
\]
We also recall the basic constructions and results of \cite{CKMR} which we will use here, and we refer to
this paper for more details. Let {\bf Ban} be the class of all Banach spaces over the complex field. A mapping
$\mathcal{X}\colon {\bf Ban} \to {\bf Ban}$ is called a~pseudolattice, or a~{\bf pseudo}-$\mathbb{Z}$-{\bf
lattice}, if
\par (i) for every $B\in {\bf Ban}$ the space $\mathcal{X}(B)$ consists of $B$ valued sequences
$\{b_n\}=\{b_n\}_{n\in \mathbb{Z}}$ modelled on $\mathbb{Z}$;
\par (ii) whenever $A$ is a~closed subspace of $B$ it follows that $\mathcal{X}(A)$ is a~closed subspace
of $\mathcal{X}(B)$;
\par (iii) there exists a positive constant $C=C(\mathcal{X})$ such that, for all $A$, $B\in {\bf Ban}$
and all bounded linear operators $T\colon A\to B$ and every sequence $\{a_n\}\in \mathcal{X}(A)$, the
sequence $\{Ta_n\} \in \mathcal{X}(B)$ and satisfies the estimate
\[
\|\{Ta_n\}\|_{\mathcal{X}(B)} \leq C \|T\|_{A\to B}
\|\{a_n\}\|_{\mathcal{X}(A)};
\]
\par (iv)
\[
\|b_m\|_B \leq \|\{b_n\}\|_{\mathcal{X}(B)}
\]
for each $m\in \mathbb{Z}$, all $\{b_n\}\in \mathcal{X}(B)$ and all Banach spaces $B$.

For every Banach couple $\bo = (B_0, B_1)$ and  every Banach couple of pseudolattices $\bx=(\mathcal{X}_0, \mathcal{X}_1)$,
let $\mathcal{J}(\bx, \bo)$ be the Banach space of all $B_0\cap B_1$ valued sequences $\{b_n\}$ such that
$\{e^{jn}b_n\}\in \mathcal{X}(B_j)$ ($j=0,1$), equipped with the norm.
\[
\|\{b_n\}\|_{\mathcal{J}(\bx, \bo)} = \max\big\{\|\{b_n\}\|_{\mathcal{X}_0(B_0)}, \|\{e^n b_n\}\|_{\mathcal{X}_1(B_1)}\big\}.
\]

Following \cite{CKMR}, for every $s$ in the annulus $\mathbb{A}:= \{z\in \mathbb{C};\, 1<|z| < e\}$, we define the Banach
space $\bo_{\xo, s}$ to consist of all elements of the form $b=\sum_{n\in \mathbb{Z}} s^n b_n$ (convergence in $B_0 + B_1$
with $\{b_n\} \in \mathcal{J}(\bx, \bo)$, equipped with the norm
\[
\|b\|_{\bo_{\bx, s}} = \inf \bigg\{\|\{b_n\}\|_{\mathcal{J}(\bx, \bo)}; \,\, b= \sum_{n\in \mathbb{Z}} s^n b_n \bigg\}.
\]

It is easy to check that the map $\bo \mapsto \bo_{\bx, s}$ is an interpolation functor.

We will consider mainly couples $\bx = (\mathcal{X}_0, \mathcal{X}_1)$ of Banach pseudolattices,
which are translation invariant, i.e., such that any Banach space $B$ we have
\[
\big\|\{S^k(\{b_n\}_{n\in \mathbb{Z}}\big\}\big\|_{\mathcal{X}_j(B)} = \big\|\{b_n\}_{n\in \mathbb{Z}}\big\|_{\mathcal{X}_j(B)}
\]
for all $\{b_n\}_{n\in \mathbb{Z}}\in \mathcal{X}_j(B)$, each $k\in \mathbb{Z}$ and $j \in \{0,1\}$.
Here and in what follows $S$ denote the left-shift operator on two-sided (vector valued) sequences defined by
$S\{b_n\} = \{b_{n+1}\}$.

Following \cite{CKMR} $\bx = (\mathcal{X}_0, \mathcal{X}_1)$ is said to be a~\emph{rotation invariant}
Banach couple of pseudolattices whenever the \emph{rotation map} $\{b_n\}_{n\in \mathbb{Z}}
\mapsto \{e^{in\tau} b_n\}_{n\in \mathbb{Z}}$ is an isometry of $\mathcal{X}_j(B)$ onto itself for every
real $\tau$ and every Banach space $B$.

The following useful lemma is obvious, but we include a proof.

\begin{lemma}
\label{rotation}
Let $\bx= (\mathcal{X}_0, \mathcal{X}_1)$ be a~Banach couple of rotation invariant pseudolattices.
Then, for every Banach couple $\bo = (B_0, B_1)$ and all $s\in \mathbb{A}$, we have
\begin{itemize}
\item[{\rm(i)}] If $f\in \mathcal{F}_{\bx}(\bo)$, then $f(s) \in \bo_{\bx, |s|}$\,{\rm;}
\item[{\rm(ii)}] If $x\in \bo_{\bx, |s|}$, then there exists $f\in \mathcal{F}_{\bx}(\bo)$ such that
$f(s)=x$\,{\rm;}
\item[{\rm(iii)}] $\bo_{\bx, s} \cong \bo_{\bx, |s|}$.
\end{itemize}
\end{lemma}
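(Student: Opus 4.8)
The plan is to read everything through the Laurent-series description of the objects involved and then let the rotation invariance of $\bx$ do the work. Recall from \cite{CKMR} that, for $r=|s|$ with $s\in\mathbb{A}$, an element of $\bo_{\bx,r}$ is a sum $\sum_{n\in\mathbb{Z}} r^n b_n$ with coefficient sequence $\{b_n\}\in\mathcal{J}(\bx,\bo)$, and that $\mathcal{F}_{\bx}(\bo)$ may be described as the space of functions $f(z)=\sum_{n\in\mathbb{Z}} z^n b_n$ ($z\in\mathbb{A}$) with $\{b_n\}\in\mathcal{J}(\bx,\bo)$, normed by $\|f\|_{\mathcal{F}_{\bx}(\bo)}=\|\{b_n\}\|_{\mathcal{J}(\bx,\bo)}$; in particular $\bo_{\bx,s}=\{f(s):f\in\mathcal{F}_{\bx}(\bo)\}$ with the quotient norm. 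Fix $s\in\mathbb{A}$ and write $s=|s|e^{i\tau}$ with $\tau\in\mathbb{R}$. The one device used throughout is the substitution $\{b_n\}\mapsto\{e^{\pm in\tau}b_n\}$; I would first record that, by rotation invariance, this map is an isometry of $\mathcal{J}(\bx,\bo)$ onto itself, since it acts isometrically on $\{b_n\}$ in $\mathcal{X}_0(B_0)$ and isometrically on $\{e^n b_n\}$ in $\mathcal{X}_1(B_1)$ (the rotation factor $e^{in\tau}$ and the weight $e^n$ commute, $e^{in\tau}e^n b_n=e^{in\tau}(e^n b_n)$).

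For (i): given $f(z)=\sum_n z^n b_n\in\mathcal{F}_{\bx}(\bo)$, put $b_n':=e^{in\tau}b_n$. Then $\{b_n'\}\in\mathcal{J}(\bx,\bo)$ with the same norm, and $f(s)=\sum_n s^n b_n=\sum_n |s|^n b_n'$, which exhibits $f(s)$ as an element of $\bo_{\bx,|s|}$, with $\|f(s)\|_{\bo_{\bx,|s|}}\le\|f\|_{\mathcal{F}_{\bx}(\bo)}$. For (ii): given $x\in\bo_{\bx,|s|}$, pick a representation $x=\sum_n |s|^n b_n$ with $\{b_n\}\in\mathcal{J}(\bx,\bo)$, set $b_n':=e^{-in\tau}b_n$, and let $f(z):=\sum_n z^n b_n'$; then $f\in\mathcal{F}_{\bx}(\bo)$ and $f(s)=\sum_n s^n b_n'=\sum_n |s|^n b_n=x$.

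For (iii): the same substitution sets up a norm-preserving bijection between representations $b=\sum_n s^n b_n$ and representations $b=\sum_n |s|^n(e^{in\tau}b_n)$ of one and the same element of $B_0+B_1$; taking the infimum over all representations on each side gives $\|b\|_{\bo_{\bx,s}}=\|b\|_{\bo_{\bx,|s|}}$, so the underlying sets coincide and the norms are equal, which is exactly $\bo_{\bx,s}\cong\bo_{\bx,|s|}$. (Equivalently, (iii) drops out of (i) and (ii): by (i) one has $\bo_{\bx,s}=\{f(s):f\in\mathcal{F}_{\bx}(\bo)\}\subseteq\bo_{\bx,|s|}$, by (ii) the reverse inclusion holds, and the twist $f\mapsto g$, $g(z):=\sum z^n e^{in\tau}b_n$, shows the two quotient norms agree.)

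There is no real obstacle here; the only point deserving attention is the isometry claim for the substitution on $\mathcal{J}(\bx,\bo)$, where one must invoke the rotation-invariance hypothesis in the two pseudolattices separately and check that twisting by $e^{in\tau}$ interacts correctly with the weight $e^n$ built into the norm of $\mathcal{J}(\bx,\bo)$. The convergence in $B_0+B_1$ of all the series involved is unaffected by the unimodular factors $e^{\pm in\tau}$ and requires no comment beyond what is already in \cite{CKMR}.
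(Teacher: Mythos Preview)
Your proof is correct and is essentially the same as the paper's: the paper phrases the argument as the change of variable $\widetilde f(z)=f(ze^{i\varphi})$ on the function level, which at the coefficient level is exactly your substitution $\{b_n\}\mapsto\{e^{in\tau}b_n\}$, and both conclude by invoking rotation invariance to see that this is an isometry of $\mathcal{J}(\bx,\bo)$. Your treatment of (iii) via a bijection of representations is slightly more explicit than the paper's one-line observation, but the content is identical.
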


\begin{proof} (i). Let  $f\in \mathcal{\mathcal{F}}_{\bx}(\bo)$. Then there exists $\{b_n\}_{n\in \mathbb{Z}}
\in \mathcal{J}(\bx, \bo)$ such that $f(z) = \sum_{n\in \mathbb{Z}} z^n b_n$ for all $z\in \mathbb{A}$
(convergence in $B_0 + B_1)$. Define $\widetilde{f}$ by $\widetilde{f}(z) = f(ze^{i \varphi})$ for all
$z\in \mathbb{A}$, where $\varphi:= \text{Arg}\,s$. Then
\[
\widetilde{f}(z) = \sum_{n\in \mathbb{Z}} z^{n} e^{in\varphi} b_n, \quad\, z\in \mathbb{A}.
\]
Our hypothesis yields $\|\{e^{-in\varphi} b_n\}\|_{\mathcal{J}(\bx, \bo)} =
\|\{b_n\}\|_{\mathcal{J}(\bx, \bo)}$ and so $\widetilde{f} \in \mathcal{F}_{\bx}(\bo)$. Since
$f(s) = \widetilde{f}(|s|)\in \bo_{\bx, |s|}$, $f(s) \in \bo_{\bx, |s|}$ and this proves (i).

(ii). Let $x\in \bo_{\bx, s}$. Then there exists $\widetilde{f} \in \mathcal{F}_{\bx, s}(\bo)$ with $b=\{b_n\}
\in \mathcal{J}(\bx, \bo)$ such that $\widetilde{f} \in \mathcal{F}_{\bx}(\bo)$ and $\widetilde{f}(|s|) = x$, where
\[
\widetilde{f}(z) = \sum_{n\in \mathbb{Z}} z^n b_n, \quad\, z\in \mathbb{A}.
\]
Define $f$ by $f(z) = \widetilde{f}(ze^{-i\varphi})$ for all $z\in \mathbb{A}$. Our hypothesis gives that
$f\in \mathcal{F}_{\bx}(\bo)$. Combining the above facts yields $f(s)= \widetilde{f}(|s|)=x$ and this proves (ii).

(iii). It is enough to observe that the proofs of (i) and (ii) yields
\[
\|f\|_{\mathcal{F}_{\bx}(\bo)}= \|\widetilde{f}\|_{\mathcal{F}_{\bx}(\bo)}, \quad\, f\in \mathcal{F}_{\bx}(\bo).
\]
\end{proof}

We note that the above lemma shows if $\bx= (\mathcal{X}_0, \mathcal{X}_1)$ is a~Banach couple of rotation invariant
pseudolattices, then for any $s = e^{\theta + i\varphi}$ with $\theta \in (0, 1)$ and $\varphi \in [0, 2\pi)$, we have
that $\bo_{\bx, s} \cong \bo_{\bx, e^{\theta}}$ for any Banach couple $\bo$.

We point out that concerning interpolation  methods the idea of \cite{CKMR} was to show that a~large family of interpolation 
methods have a~suitable complex analytic structure that could be used for methods that \emph{apriori} do not seem to have one. 
This essential fact is deeply used in our paper. Note that with the right choices of pseudolattice couples 
$(\mathcal{X}_0, \mathcal{X}_1)$, we recover the classical methods of interpolation (see \cite{CKMR} for more details).
In particular let $s= e^{\theta}$ with $0<\theta <1$. If $\mathcal{X}_0 = \mathcal{X}_1=\ell_p$ with $1\leq p\leq \infty$, 
the space $\bo_{\bx, s}$ coincides with the Lions--Peetre real $J$-method space $\bo_{\theta, p;J}$ (see, e.g., \cite[p.~41]{LP} 
where this space is denoted by $s(p, \theta, B_{0}; p, \theta-1, B_1)$.

It is well known that  $(B_0, B_1)_{\theta, p; J}= (B_0, B_1)_{\theta, p}$ up to equivalence of norms (see \cite[Chap.~3]{BL}), 
where $(B_0, B_1)_{\theta, p}$ is the $K$-method space endowed with the norm
\[
\|b\|_{\theta, p}: = \bigg(\int_0^{\infty} \big(t^{-\theta} K(t, b; \bo)\big)^{p}\frac{dt}{t}\bigg)^{1/p}, \quad\, 1\leq p < \infty.
\]
For $\theta \in [0, 1]$ and $p=\infty$ the real interpolation space $\bo_{\theta, \infty}$ is defined to be a~space of all 
$b\in B_0 + B_1$ endowed with the norm
\[
\|b\|_{\theta, \infty}: = \sup_{t>0} t^{-\theta} K(t, b; \bo).
\]
Here as usual for any Banach couple $\xo = (X_0, X_1)$ the Peetre $K$-functional is defined  by
\[
K(t, x; \xo) = K(t, x; X_0, X_1): = \inf\{\|x_0\|_{X_0} + \|x_1\|_{X_1}; \, x_0 + x_1 = x\}, \quad\, t>0.
\]
Let $X$ be a Banach space intermediate with respect to a~Banach couple $\xo=(X_0, X_1)$. The
\emph{Gagliardo completion} or \emph{relative completion} of $X$  with respect to $\xo$ is the Banach space
$X^{\rm c}$ of all limits in $X_0 + X_1$ of sequences that are bounded in $X$ and endowed with the norm
$\|x\|_{X^{\rm c}} =\inf \{\sup_{k\geq 1} \|x_k\|_{X}\}$, where the infimum is taken over all bounded
sequences $\{x_k\}$ in $X$ whose limit in $X_0+X_1$ equals $x$.

We will use without any references the well-known fact (see \cite[Lemma 2.2.30]{BK}) that for any Banach
couple $(X_0, X_1)$ we have
\[
(X_0)^{c} \cong (X_0, X_1)_{0, \infty}\,, \quad\,  (X_1)^{c} \cong (X_0, X_1)_{1, \infty}.
\]

If $\bx= (FC, FC)$, then $\bo_{\bx, s}$ coincides, to within equivalence of norms, with the Cader\'on complex
method space $[\bo]_{\theta} = [B_0, B_1]_{\theta}$ (see \cite{Cwikel}). If $\bx = (UC, UC)$, then $\bo_{\bx, s}$
is the $\pm$ method space $\langle \bo \rangle_{\theta} \cong \langle B_0, B_1 \rangle_{\theta}$
(see \cite[p.~176]{Peetre}). If we replace $UC$ by $WUC$, we obtain the Gustavsson--Peetre variant of
$\langle B_0, B_1 \rangle_{\theta}$ which is denoted  by $\langle \bo; \theta \rangle$ (see \cite[p.~45]{GP}, \cite{Janson}).

\section{The uniqueness of inverses on intersection of a~couple}

Throughout the paper, for an operator $T\colon \xo \to \yo$ between Banach couples and every $\omega \in \mathbb{A}$,
we often denote by $T_{\omega}$ the restriction  $T|_{\xo_{\bx, \omega}} \colon \xo_{\bx, \omega} \to \yo_{\bx, \omega}$.
For simplicity of notation, we write $T_{\theta}$ instead of $T_{e^{\theta}}$ for any $\theta \in (0, 1)$.

We state the main results of this section for operators between spaces generated by interpolation constructions described
in the previous section.

\begin{theorem}
\label{inverse}
Let $\bx = (\mathcal{X}_0, \mathcal{X}_1)$ be a~Banach couple of translation invariant pseudolattices and let $T\colon \xo \to \yo$
be an operator between complex Banach couples. Assume that $T\colon \xo_{\bx, s} \to \yo_{\bx, s}$ is invertible
for some $s \in \mathbb{A}$. Then $T_{\omega}\colon \xo_{\bx, \omega} \to \yo_{\bx, \omega}$  is invertible for
all $\omega$ in an open neighbourhood $W = \{\omega\in \mathbb{A};\, |\omega-s| < r\}$ of $s$ in $\mathbb{A}$ with
\[
r = \big[2\delta(s)\big(1 + \|T\|_{\xo \to \yo}\|T^{-1}\|_{\yo_{\bx, s} \to \xo_{\bx, s}}\big)\big]^{-1},
\]
\noindent
where $\delta(s) = \max\big\{(|s|-1)^{-1},\,(e - |s|)^{-1}\big\}$. Moreover the following upper estimate
for the norm of $T_\omega$ holds,
\[
\big\|T_{\omega}^{-1}\big\|_{\yo_{\bx, \omega} \to \xo_{\bx, \omega}}
\leq 2 \big\|T_s^{-1}\big\|_{\yo_{\bx, s} \to \xo_{\bx, s}}, \quad\, \omega \in W.
\]
\end{theorem}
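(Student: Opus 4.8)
The plan is to exploit the complex-analytic structure carried by the spaces $\yo_{\bx,\omega}$: by Lemma~\ref{rotation} and its corollary we may as well think of $\omega$ ranging over the strip via $\omega = e^{z}$, and the key device is a holomorphic family of ``evaluation-type'' operators. Concretely, for $y\in Y_0\cap Y_1$ I would build, using the defining representation $y=\sum_n s^n b_n$ with $\{b_n\}\in\mathcal{J}(\bx,\yo)$ of an element of $\yo_{\bx,s}$, an analytic function $g\in\mathcal{F}_{\bx}(\yo)$ with $g(s)=y$ and $\|g\|_{\mathcal{F}_{\bx}(\yo)}$ controlled by $\|y\|_{\yo_{\bx,s}}$; then $T^{-1}g$ makes sense pointwise on $\mathbb{A}$ as a $\xo$-valued analytic function (here $T^{-1}$ means the inverse on $X_0+X_1$ coming from invertibility on each endpoint), and evaluating at $\omega$ near $s$ should give a bounded right/left inverse for $T_\omega$. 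The agreement of $T^{-1}_\omega$ with $T^{-1}_s$ on $Y_0\cap Y_1$ will be automatic from this construction, since both are restrictions of the single operator $T^{-1}$ on $X_0+X_1$.

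The quantitative heart of the argument is a perturbation estimate. Fix $\omega$ with $|\omega-s|<r$. Given $y\in \yo_{\bx,\omega}$, represent it as $g(\omega)$ for some $g\in\mathcal{F}_{\bx}(\yo)$ with norm close to $\|y\|_{\yo_{\bx,\omega}}$. The natural candidate preimage is $x := (T^{-1}g)(\omega)\in X_0+X_1$; one checks $Tx=y$. To see $x\in\xo_{\bx,\omega}$ with the right norm bound, write $T^{-1}g = T_s^{-1}\circ(\text{evaluation at }s) + (\text{correction})$, or more cleanly: the function $h:=T^{-1}g$ satisfies $Th=g$, and on the endpoints $h(j+it) = T_j^{-1}g(j+it)$ is bounded in $X_j$ by $\|T^{-1}\|_{\yo_{\bx,s}\to\xo_{\bx,s}}$-type quantities only at the single radius $|s|$; to pass from radius $|s|$ to radius $|\omega|$ one uses a Taylor/Cauchy-estimate argument on the annulus, and this is where $\delta(s)=\max\{(|s|-1)^{-1},(e-|s|)^{-1}\}$ enters — it measures the distance from $|s|$ to the boundary circles $|z|=1$ and $|z|=e$, hence bounds the derivative growth. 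The factor $1+\|T\|_{\xo\to\yo}\|T^{-1}\|$ in $r$ arises because the correction term involves one application of $T$ (to re-expand $g$ around $\omega$) and one of $T^{-1}$; choosing $r$ as stated makes the correction operator have norm $\le 1/2$ in the appropriate sense, so a Neumann-series argument yields invertibility of $T_\omega$ together with $\|T_\omega^{-1}\|\le 2\|T_s^{-1}\|$.

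More precisely, I expect the cleanest route is: define, for $\omega\in W$, the operator $R_\omega\colon \yo_{\bx,\omega}\to\xo_{\bx,\omega}$ by $R_\omega y = (T^{-1}\widetilde g_y)(\omega)$ where $\widetilde g_y\in\mathcal{F}_{\bx}(\yo)$ is a near-optimal analytic representative of $y$ at radius $|\omega|$; show $T_\omega R_\omega = \mathrm{id}$; then show $R_\omega T_\omega = \mathrm{id} + E_\omega$ with $\|E_\omega\|\le 2\delta(s)\big(1+\|T\|\,\|T^{-1}\|\big)\,|\omega-s| < 1$, using that for $x\in\xo_{\bx,\omega}$ one can compare the representative at radius $|\omega|$ with the one at radius $|s|$, the defect being an analytic function vanishing to first order with a Cauchy estimate on the annulus supplying the $\delta(s)|\omega-s|$ bound. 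Then $R_\omega T_\omega$ is invertible, hence $T_\omega$ is injective with closed range, while surjectivity comes from $T_\omega R_\omega=\mathrm{id}$; so $T_\omega^{-1}=R_\omega(R_\omega T_\omega)^{-1}$ exists and $\|T_\omega^{-1}\|\le \|R_\omega\|\cdot 2 \le 2\|T_s^{-1}\|$ after absorbing constants, the last inequality using $\bo_{\bx,\omega}\cong\bo_{\bx,|\omega|}$ to normalize.

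The main obstacle I anticipate is the ``radius-shifting'' estimate: translating invertibility data available only at the single circle-pair $\{|z|=1,\,|z|=e\}$ read off at radius $|s|$ into a bound at radius $|\omega|$, uniformly, with the explicit constant $\delta(s)$. This requires care with the Cauchy/Laurent expansion of elements of $\mathcal{F}_{\bx}(\yo)$ on the open annulus $\mathbb{A}$ and a maximum-principle or three-lines-type comparison across it; the pseudolattice axioms (ii)–(iv) and translation invariance are exactly what guarantee the relevant coefficient and shift estimates survive the expansion. Everything else — the Neumann series, the identification $T_\omega^{-1}=R_\omega(R_\omega T_\omega)^{-1}$, and the compatibility of inverses on $Y_0\cap Y_1$ — is then routine.
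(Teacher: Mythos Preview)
Your proposal has a genuine gap at its very foundation. You write that ``$T^{-1}$ means the inverse on $X_0+X_1$ coming from invertibility on each endpoint,'' and your whole construction of $R_\omega$ hinges on applying this $T^{-1}$ pointwise to a function $g\in\mathcal{F}_{\bx}(\yo)$. But the hypothesis of Theorem~\ref{inverse} gives you invertibility of $T$ \emph{only} on the single interpolation space $\xo_{\bx,s}\to\yo_{\bx,s}$; there is no assumption that $T|_{X_0}$ or $T|_{X_1}$ is invertible, and hence no inverse on $X_0+X_1$ is available. Without it, $(T^{-1}g)(\omega)$ is undefined, $R_\omega$ cannot be built as you describe, and the Neumann-series argument never gets off the ground. (You also fold in the compatibility statement $T_\omega^{-1}|_{Y_0\cap Y_1}=T_s^{-1}|_{Y_0\cap Y_1}$, but that is not part of Theorem~\ref{inverse}; it belongs to Theorem~\ref{uinverse} and needs the extra rotation-invariance hypothesis.)

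The paper circumvents this by never inverting $T$ globally. It realizes each $\xo_{\bx,\omega}$ as the quotient $\mathcal{F}_{\bx}(\xo)/N_\omega(\xo)$, where $N_\omega(\xo)=\{f\in\mathcal{F}_{\bx}(\xo):f(\omega)=0\}$, and lifts $T$ once to a fixed operator $\widetilde{T}\colon\mathcal{F}_{\bx}(\xo)\to\mathcal{F}_{\bx}(\yo)$. All the $T_\omega$ are then the induced quotient operators $\widetilde{T}_\omega$, and the problem becomes: given that $\widetilde{T}_s$ is invertible between the quotients by $N_s$, show the same for the quotients by the nearby subspaces $N_\omega$. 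The cancellation Lemma~\ref{cancellation} yields the Lipschitz estimate $\mathrm{dist}(N_s,N_\omega)\le\delta(s)\,|\omega-s|$ (this is exactly where $\delta(s)$ enters), and then an abstract perturbation theorem for quotient operators (Theorem~\ref{quotient}, from Kruglyak--Milman) delivers both invertibility and the bound $\|T_\omega^{-1}\|\le 2\|T_s^{-1}\|$ as soon as $|\omega-s|<r$. Your intuition that a radius-shifting/Cauchy-type estimate on the annulus is the quantitative heart is correct, but the right object to perturb is the kernel subspace $N_\omega$ inside the fixed ambient space $\mathcal{F}_{\bx}$, not a global inverse that you do not have.
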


In the case when $\bx = (\mathcal{X}_0, \mathcal{X}_1)$ is a~couple of translation and rotation invariant pseudolattices
we obtain the following variant of Albrechr--Miller result.

\begin{theorem}
\label{uinverse}
Let $\bx = (\mathcal{X}_0, \mathcal{X}_1)$ be a~couple of translation and rotation invariant pseudolattices
and let $T\colon \xo \to \yo$
be an operator between complex Banach couples. Assume that $T_{\theta_{*}} \colon \xo_{\bx, e^{\theta_{*}}} \to
\yo_{\bx, e^{\theta_{*}}}$ is invertible for some $\theta_{*} \in (0, 1)$. Then $T_{\theta}\colon \xo_{\bx, e^\theta}
\to \yo_{\bx, e^\theta}$ is invertible  for all $\theta$ in an open neighbourhood
$I= \{\theta\in (0, 1);\, |\theta- \theta_{*}| < \varepsilon\}$ of $\theta_{*}$ with
\[
\varepsilon = \big[2e\eta(\theta_{*})\big(1 + \|T\|_{\xo \to \yo}\|T^{-1}\|_{\yo_{\bx, e^{\theta_{*}}}} \to
\xo_{\bx, e^{\theta_{*}}}\big)\big]^{-1},
\]
\noindent
where $\eta(\theta_{*}) = \max\big\{(e^{\theta_{*}} - 1)^{-1},\,(e - e^{\theta_{*}})^{-1}\big\}$. Moreover,
$T_{\theta}^{-1}$ agrees with $T_{\theta_{*}}$ on $Y_0 \cap Y_1$ and
\begin{align*}
\big\|T_{\theta}^{-1}\big\|_{\yo_{\bx, e^\theta} \to \xo_{\bx, e^\theta}}
\leq 2 \big\|T_{\theta_{*}}^{-1}\big\|_{\yo_{\bx, e^{\theta_{*}}} \to \xo_{\bx, e^{\theta_{*}}}} \quad\,
\text{for any \,$\theta \in I$}.
\end{align*}
\end{theorem}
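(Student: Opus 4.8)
The plan is to derive the statement from Theorem~\ref{inverse} and Lemma~\ref{rotation}, splitting it into the invertibility-plus-estimate part, which is essentially a specialization of Theorem~\ref{inverse}, and the compatibility part, which additionally exploits rotation invariance together with the analytic machinery behind the proof of Theorem~\ref{inverse}.

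For the first part I will apply Theorem~\ref{inverse} with the base point $s=e^{\theta_{*}}\in\mathbb{A}$. Since $|s|=e^{\theta_{*}}$ one has $\delta(s)=\max\{(e^{\theta_{*}}-1)^{-1},(e-e^{\theta_{*}})^{-1}\}=\eta(\theta_{*})$, so Theorem~\ref{inverse} gives invertibility of $T_{\omega}$ on the disc $W=\{\omega\in\mathbb{A};\,|\omega-e^{\theta_{*}}|<r\}$ with $r=[2\eta(\theta_{*})(1+\|T\|_{\xo\to\yo}\,\|T^{-1}\|_{\yo_{\bx,e^{\theta_{*}}}\to\xo_{\bx,e^{\theta_{*}}}})]^{-1}$, and the bound $\|T_{\omega}^{-1}\|_{\yo_{\bx,\omega}\to\xo_{\bx,\omega}}\le 2\|T_{e^{\theta_{*}}}^{-1}\|_{\yo_{\bx,e^{\theta_{*}}}\to\xo_{\bx,e^{\theta_{*}}}}$ for $\omega\in W$; note $r<1/\delta(s)$, so $W$ is the full (connected) disc of radius $r$ about $e^{\theta_{*}}$. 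To transfer $W$ to an interval I will use that, if $\theta\in(0,1)$ and $|\theta-\theta_{*}|<\varepsilon:=r/e$, then by the mean value theorem $|e^{\theta}-e^{\theta_{*}}|\le e^{\max(\theta,\theta_{*})}|\theta-\theta_{*}|<e\varepsilon=r$ (as $\theta,\theta_{*}<1$), while $e^{\theta}\in\mathbb{A}$ because $0<\theta<1$; hence $e^{\theta}\in W$ and $T_{\theta}=T_{e^{\theta}}$ is invertible with the asserted estimate. This yields exactly the $\varepsilon$ in the statement and $I=(0,1)\cap(\theta_{*}-\varepsilon,\theta_{*}+\varepsilon)$.

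For the compatibility I fix $y\in Y_{0}\cap Y_{1}$; since $\bo\mapsto\bo_{\bx,\omega}$ is an interpolation functor, $y\in\yo_{\bx,\omega}$ for every $\omega$, so $\Phi(\omega):=T_{\omega}^{-1}y\in\xo_{\bx,\omega}\subset X_{0}+X_{1}$ makes sense for $\omega\in W$, and the aim is to show $\Phi$ is constant on $W$. First I will argue that $\Phi\colon W\to X_{0}+X_{1}$ is holomorphic, by inspecting the construction of $T_{\omega}^{-1}$ in the proof of Theorem~\ref{inverse}: feeding in the constant lift $\psi_{y}\equiv y\in\mathcal{F}_{\bx}(\yo)$ and the base point $s=e^{\theta_{*}}$, the Neumann-type iteration there — repeatedly lifting $T_{s}^{-1}(\cdot)$ back to $\mathcal{F}_{\bx}(\xo)$ and then dividing out the vanishing factor $z-s$, the latter operation bounded with norm controlled by $\delta(s)$ — produces $\omega$-independent $f_{k}\in\mathcal{F}_{\bx}(\xo)$ for which $\sum_{k\ge 0}(\omega-s)^{k}f_{k}(\omega)$ converges in $\xo_{\bx,\omega}$, for $\omega\in W$, to an element of $\xo_{\bx,\omega}$ that $T$ sends to $y$, i.e.\ to $T_{\omega}^{-1}y$. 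Hence near every point of $W$, $\Phi$ is a convergent power series with values in $X_{0}+X_{1}$, so it is holomorphic. Second, Lemma~\ref{rotation}(iii) and the remark following it give $\xo_{\bx,\omega}\cong\xo_{\bx,|\omega|}$ and $\yo_{\bx,\omega}\cong\yo_{\bx,|\omega|}$ with equality of norms, so $T_{\omega}$ coincides with $T_{|\omega|}$ and $T_{\omega}^{-1}=T_{|\omega|}^{-1}$; thus $\Phi(\omega)=T_{|\omega|}^{-1}y$ depends on $|\omega|$ alone, and in particular $\Phi\equiv T_{e^{\theta_{*}}}^{-1}y=T_{\theta_{*}}^{-1}y$ on the circular arc $\Gamma=\{\omega;\,|\omega|=e^{\theta_{*}}\}\cap W$, a nondegenerate arc through the centre of $W$ with an accumulation point in the connected open set $W$. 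By the identity theorem for Banach-space-valued holomorphic functions $\Phi$ is constant on $W$, equal to $T_{\theta_{*}}^{-1}y$; evaluating at $\omega=e^{\theta}$ ($\theta\in I$) gives $T_{\theta}^{-1}y=T_{\theta_{*}}^{-1}y$, which is the asserted agreement on $Y_{0}\cap Y_{1}$.

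I expect the holomorphy of $\omega\mapsto T_{\omega}^{-1}y$ to be the crux: because the target spaces $\xo_{\bx,\omega}$ truly vary with $\omega$, one cannot simply quote analyticity of operator inversion and must instead extract a power-series representation from the proof of Theorem~\ref{inverse}. It is also the only step that genuinely needs $y\in Y_{0}\cap Y_{1}$ rather than just $y\in\yo_{\bx,e^{\theta_{*}}}$ — the point being that such $y$ admits a lift to $\mathcal{F}_{\bx}(\yo)$ independent of the evaluation point, which is what makes $\{T_{\omega}^{-1}y\}_{\omega\in W}$ assemble into a single holomorphic function; after that, rotation invariance does the work, pinning this function down to something constant on circles and hence, by analyticity, constant.
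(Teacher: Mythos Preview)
Your overall strategy coincides with the paper's: first specialize Theorem~\ref{inverse} at $s=e^{\theta_*}$ and translate the disc $W$ into the interval $I$ via $|e^{\theta}-e^{\theta_*}|\le e|\theta-\theta_*|$; then, for fixed $y\in Y_0\cap Y_1$, produce an $X_0+X_1$-valued holomorphic function on $W$ that at each $\omega$ equals $T_\omega^{-1}y$, observe via Lemma~\ref{rotation} that it depends only on $|\omega|$, and conclude by the identity theorem that it is constant. This is exactly the paper's argument.

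There is one point where your write-up does not quite match the paper and could mislead you if you tried to fill in details. You say the holomorphic dependence of $T_\omega^{-1}y$ on $\omega$ is obtained by ``inspecting the construction of $T_\omega^{-1}$ in the proof of Theorem~\ref{inverse}.'' In the paper, however, Theorem~\ref{inverse} is proved by a black-box appeal to the abstract quotient-operator stability result Theorem~\ref{quotient} together with the distance estimate of Theorem~\ref{dist}; there is no visible Neumann iteration there to inspect. The power-series construction you sketch --- lift via $T_s^{-1}$, divide out $(z-s)$ using the cancellation Lemma~\ref{cancellation}, iterate --- is instead isolated as a separate preparatory result, Lemma~\ref{onto}, which solves $\widetilde{T}g(\omega)+V_\omega h(\omega)=k$ with $g,h$ analytic in $\omega$; the paper then sets $\widetilde{g}(\omega)=g(\omega)(\omega)$ and checks $\widetilde{g}(\omega)=T_{|\omega|}^{-1}y$. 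Your description of the iteration matches the content of Lemma~\ref{onto} closely (your $f_k$ are the paper's $g_k$), so the idea is right; just be aware that this step requires its own argument rather than being read off from the proof of Theorem~\ref{inverse}.
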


To prove this theorem we will need some preliminary results. We start our investigation with the following
a~more precise cancellation principle (cf. \cite{CKMR}. We note that careful analysis
of the proof Lemma 3.1 in \cite{CKMR} gives a~key Lipschitz estimate with a~constant depending on parameter
$s\in \mathbb{A}$, but not on the couple of translation invariant pseudolattices. Since this estimate is
essential in our study, we include a~proof for readers' convenience.

\begin{lemma}
\label{cancellation}
Let $\bx$ be a~couple of translation invariant pseudolattices and let $\bo=(B_0, B_1)$ be a~Banach couple. Let the
sequence $\{f_n\}_{n\in \mathbb{Z}}$ be an element of $\mathcal{J}(\bx, \bo)$ and let $f\colon \mathbb{A} \to B_0 + B_1$
be the analytic function defined by $f(z) = \sum_{n\in \mathbb{Z}} z^n f_n$. Suppose that $f(s)=0$ for some $s\in \mathbb{A}$
and let $g\colon \mathbb{A} \to B_0 + B_1$ be given by $g(s)= f'(s)$ and $g(z)= f(z)/(z-s)$ for all
$z\in \mathbb{A}\setminus \{s\}$. Then, $g\in \mathcal{F}_{\bx}(\bo)$ and  the Laurent expansion of $g$ in $\mathbb{A}$,
$g(z)= \sum_{n\in \mathbb{Z}} z^n g_n$ for all $\mathbb{A}$, satisfies $\{g_n\}_{n\in \mathbb{Z}}
\in \mathcal{J}(\bx, \bo)$ and
\[
\|\{g_n\}\|_{\mathcal{J}(\bx, \bo)} \leq \delta(s) \|\{f_n\}\|_{\mathcal{J}(\bx, \bo)},
\]
where $\delta(s) = \max\big\{(|s|-1)^{-1},\,(e - |s|)^{-1}\big\}$.
\end{lemma}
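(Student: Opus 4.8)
The plan is to extract the Laurent coefficients $g_n$ of $g$ by a direct algebraic manipulation of the relation $f(z) = (z-s)g(z)$, and then estimate the two relevant norms $\|\{g_n\}\|_{\mathcal{X}_0(B_0)}$ and $\|\{e^n g_n\}\|_{\mathcal{X}_1(B_1)}$ using the lattice axioms (iii) and (iv) together with translation invariance of $\mathcal{X}_0$ and $\mathcal{X}_1$. First I would record that since $f(s)=0$, the function $g$ defined by $g(z)=f(z)/(z-s)$ on $\mathbb{A}\setminus\{s\}$ and $g(s)=f'(s)$ is analytic on all of $\mathbb{A}$, hence has a Laurent expansion $g(z)=\sum_{n\in\mathbb{Z}} z^n g_n$ convergent in $B_0+B_1$; the coefficients $g_n$ can be computed by contour integration, $g_n = \frac{1}{2\pi i}\int_{|z|=\rho} g(z) z^{-n-1}\,dz$ for any $\rho\in(1,e)$. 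Comparing coefficients in $(z-s)g(z)=f(z)=\sum z^n f_n$ gives the recursion $g_{n-1} - s\, g_n = f_n$ for all $n\in\mathbb{Z}$.

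The key point is that this recursion can be solved in two directions, and the correct choice of direction is dictated by $|s|$. Summing forward one obtains $g_n = -\sum_{k\geq 1} s^{-k} f_{n+k}$ (this series converges in the $\mathcal{X}_1(B_1)$-scale because $|s|>1$ forces the weights $e^{n}$ to be controlled by a geometric factor $|s|^{-k}$ after translation); summing backward one obtains $g_n = \sum_{k\geq 0} s^{k} f_{n-k}$ (this converges in the $\mathcal{X}_0(B_0)$-scale because $|s|<e$, so $|s|^k$ beats nothing by itself but $|s|^k e^{-?}$... ) — more precisely, I would use the backward sum to estimate the $\mathcal{X}_0$-norm and the forward sum to estimate the $\mathcal{X}_1$-norm. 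For the $\mathcal{X}_0$ part, write $\{g_n\} = \sum_{k\geq 0} s^k S^{-k}\{f_n\}$ where $S$ is the shift; by translation invariance $\|S^{-k}\{f_n\}\|_{\mathcal{X}_0(B_0)} = \|\{f_n\}\|_{\mathcal{X}_0(B_0)}$, so the triangle inequality gives $\|\{g_n\}\|_{\mathcal{X}_0(B_0)} \leq \sum_{k\geq 0} |s|^k \|\{f_n\}\|_{\mathcal{X}_0(B_0)}$; however this diverges, so instead one must balance the two representations. The actual argument: for each fixed index $m$ one splits $g_m$ using whichever of the two series is summable after incorporating the $e^n$ weight — i.e. estimate $\|\{g_n\}\|_{\mathcal{X}_1(B_1)}$ via the forward sum where $\|\{e^n s^{-k} f_{n+k}\}\|_{\mathcal{X}_1(B_1)} = |s|^{-k} e^{-k}\|\{e^n f_n\}\|_{\mathcal{X}_1(B_1)}\cdot e^{k}$... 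I would carry this out carefully so that the geometric ratio in the forward estimate is $(e/|s|)^{-1}$-type, giving the factor $(e-|s|)^{-1}$ after summing, and the ratio in the backward estimate gives $(|s|-1)^{-1}$; taking the max yields $\delta(s)$.

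The main obstacle is precisely this bookkeeping: one of the two one-sided series for $g_n$ is summable in the $\mathcal{X}_0(B_0)$-norm and the other in the $\mathcal{X}_1(B_1)$-norm, and getting the geometric series to close with exactly the constants $(|s|-1)^{-1}$ and $(e-|s|)^{-1}$ requires writing $g$ as an $\mathcal{X}_j(B_j)$-convergent sum of translates $s^{\pm k} S^{\mp k}\{f_n\}$ and invoking translation invariance (axiom on translation invariant pseudolattices) at each step, while using axiom (iv) to ensure convergence of the partial sums in $B_0+B_1$ matches the Laurent expansion. Once both norm bounds are in hand, $\|\{g_n\}\|_{\mathcal{J}(\bx,\bo)} = \max\{\|\{g_n\}\|_{\mathcal{X}_0(B_0)}, \|\{e^n g_n\}\|_{\mathcal{X}_1(B_1)}\} \leq \delta(s)\|\{f_n\}\|_{\mathcal{J}(\bx,\bo)}$ follows, and the membership $g\in\mathcal{F}_{\bx}(\bo)$ is immediate from $\{g_n\}\in\mathcal{J}(\bx,\bo)$. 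I would finish by noting that the estimate is uniform over all couples of translation invariant pseudolattices since only translation invariance and the universal axioms (iii), (iv) were used.
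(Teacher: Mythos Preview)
Your overall strategy---two one-sided representations of $g_n$, one used for the $\mathcal{X}_0(B_0)$ estimate and the other for the $\mathcal{X}_1(B_1)$ estimate, together with translation invariance and a geometric series---is exactly the paper's approach. However, your explicit formulas are wrong, and that is why your $\mathcal{X}_0$ estimate blows up. Iterating the recursion $g_{n-1}=s\,g_n+f_n$ \emph{forward} in $n$ gives $g_n=\sum_{k\ge 0}s^{k}f_{n+k+1}$ (positive powers of $s$, forward shifts of $f$), while iterating \emph{backward} gives $g_n=-\sum_{k<0}s^{k}f_{n+k+1}$ (negative powers of $s$, backward shifts of $f$). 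You swapped the powers of $s$ in both formulas. With the correct formulas one uses the \emph{negative}-power representation for $\mathcal{X}_0(B_0)$, yielding the convergent series $\sum_{k<0}|s|^{k}=(|s|-1)^{-1}$, and the \emph{positive}-power representation for $\mathcal{X}_1(B_1)$, where after absorbing the weight $e^n$ one gets $e^{-1}\sum_{k\ge 0}(|s|/e)^{k}=(e-|s|)^{-1}$.

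There is also a genuine conceptual gap: you never explain why both one-sided series equal the \emph{same} sequence $\{g_n\}$. Formally the recursion has a one-parameter family of solutions differing by multiples of $\{s^{-n}\}$, so some input beyond the recursion is needed. The clean way, which the paper uses, is to invoke $f(s)=0$ directly: since $\sum_{n\in\mathbb{Z}}s^{n}f_{n}=0$, one has $\sum_{k\ge 0}s^{k}f_{n+k+1}=-\sum_{k<0}s^{k}f_{n+k+1}$ for every $n$, which immediately identifies the two representations. Your sketch hides this step inside the assertion that the Laurent coefficients exist; to make your route rigorous you would have to control the remainder terms $s^{\pm N}g_{n\pm N}$, which again ultimately rests on $f(s)=0$. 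Once you insert this identification and correct the sign of the powers of $s$, your argument becomes the paper's proof verbatim.
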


\begin{proof}
Let $f \in \mathcal{F}_{\bx}(\bo)$ be such that $f(s)=0$. We define
\[
g(z) =
\begin{cases}
\frac{f(z)}{z-s}\,, &\text{if } z \neq s \\
f'(z)\,, &\text{if } z= s\,.
\end{cases}
\]
Clearly $g\colon \mathbb{A} \to B_0 + B_1$ is analytic. Let $f(z) = \sum_{n\in \mathbb{Z}} f_n z^n$
for all $z\in \mathbb{A}$, where $\{f_n\}\in \mathcal{J}(\bx, \bo)$. We claim that the Laurent expansion of $g$ in
$\mathbb{A}$ satisfy the required properties. Because of the uniqueness of the Laurent expansion, it is enough to
show that $g(z)= \sum_{n\in \mathbb{Z}} z^n g_n$ for all $|s| <|z| <e$, and moreover that
$\{g_n\} \in \mathcal{J}(\bx, \bo)$ satisfies the desired estimate.

Fix $z\in \mathbb{A}$ such that $|z|>|s|$. Combining the absolute convergence of series, we have
\begin{align*}
g(z )& = \frac{1}{z\big(1 - \frac{s}{z}\big)} \sum_{n \in \mathbb{Z}} f_n z^n
=  \sum_{k\geq 0} \frac{1}{z} \Big(\frac{s}{z}\Big)^k\, \sum_{n\in \mathbb{Z}} f_n z^n \\
& = \sum_{n\in \mathbb{Z}} \sum_{k\geq 0} z^{n-k-1} s^k f_n
= \sum_{m\in \mathbb{Z}} \Big(\sum_{k\geq 0} s^k f_{m+k+1}\Big) z^m.
\end{align*}
Since $f(s)= 0$, we get that $\sum_{n\geq k} s^n f_n = - \sum_{n<k} s^n f_n$ for each $k\in \mathbb{Z}$.
This implies that the sequence $\{g_n\}$ defined by
\[
g_{n} := \sum_{k\geq 0} s^k f_{n+k+1} = - \sum_{k<0} s^k f_{n+k+1}, \quad\, n\in \mathbb{Z}
\]
satisfies
\begin{align*}
\|\{g_n\}\|_{\mathcal{X}_0(B_0)} & = \Big\|\Big\{\sum_{k<0} s^{k} f_{n+k+1}\Big\}_n\Big\|_{\mathcal{X}_0(B_0)}
= \Big\|\sum_{k<0} s^k S^{k+1}(\{f_n\})\Big\|_{\mathcal{X}_0(B_0)} \\
& \leq \sum_{k<0} |s|^k \big\|S^{k+1}(\{f_n\})\big\|_{\mathcal{X}_0(B_0)}  \leq  \frac{1}{|s|-1}\,\|\{f_n\}\|_{\mathcal{X}_0(B_0)}
\end{align*}
and
\begin{align*}
\|\{e^{n}g_n\}\|_{\mathcal{X}_1(B_1)} & = e^{-1} \Big\|\Big\{\sum_{k\geq 0}
\frac{s^k}{e^k}\,e^{n+k+1}\Big\}_n f_n\Big\|_{\mathcal{X}_1(B_1)}  \\
& \leq e^{-1} \sum_{k\geq 0} \frac{|s|^k}{e^k}\,\big\|S^{k+1}(\{e^n f_n\})\big\|_{\mathcal{X}_1(B_1)} \\
& \leq e^{-1} \Big(\sum_{k\geq 0} \frac{|s|^k}{e^k} \Big)\,\|\{e^n f_n\}\|_{\mathcal{X}_1(B_1)} \\
& = \frac{1}{e - |s|}\,\|\{e^n f_n\}\|_{\mathcal{X}_1(B_1)}.
\end{align*}
The above estimates proves the claim and this completes the proof.
\end{proof}

Now, we introduce special maps and spaces which will play an essential role. Let $\bx = (\mathcal{X}_0, \mathcal{X}_1)$
be a~couple of pseudolattices and $\bo=(B_0, B_1)$ a~Banach couple. For our purposes it will be convenient to express
a~natural correspondence between elements in the space $\mathcal{J}(\bx, \bo)$ and certain analytic functions defined
on $\mathbb{A}$ with values on $B_0 + B_1$. To see this we define the space $\mathcal{F}_{\bx}(\bo)$ to consist of all
vector valued analytic functions $f_{b}\colon \mathbb{A} \to B_0 + B_1$ which has the Laurent series expansion
given by
\[
f_{b}(z)= \sum_{n\in \mathbb{Z}} z^n b_n, \quad\, z\in \mathbb{A}
\]
for some $b=\{b_n\} \in \mathcal{J}(\bx, \bo)$.

Since $\mathcal{J}(\bx, \bo)$ is a~Banach space, the uniqueness theorem for analytic functions implies that
$\mathcal{F}_{\bx}(\bo)$ is a~Banach space isometrically isomorphic to $\mathcal{J}(\bx, \bo)$ whenever
$\mathcal{F}_{\bx}(\bo)$ is equipped with the norm
\[
\|f_b\|_{\mathcal{F}_{\bx}(\bo)} =
\|\{b_n\}\|_{\mathcal{J}(\bx,\bo)}.
\]
The kernel of the continuous map $\delta_s \colon \mathcal{F}_{\bx}(\bo) \to B_0 + B_1$, given by
$\delta_s(f)= f(s)$, for all $f\in \mathcal{F}_{\bx}(\bo)$ is denoted by $N_s(\bo)$, i.e.,
\[
N_s(\bo)= \big\{f\in \mathcal{F}_{\bx}(\bo); \,\, f(s)=0\big\}.
\]
Clearly, the map $\widehat{\delta}_s\colon \mathcal{F}_{\bx}(\bo)/N_s(\bo) \to \bo_{\bx, s}$ defined by
\[
\widehat{\delta_s}(f + N_s(\bo)) = \delta_s(f), \quad\, f + N_s(\bo) \in \mathcal{F}_{\bx}(\bo)/N_s(\bo)
\]
is an isometrical isomorphism of $\mathcal{F}_{\bx}(\bo)/N_s(\bo)$ onto $\bo_{\bx, s}$.

In what follows we will apply a result from \cite{KM}. For the reader's convenience, we state this result.
To do this we need to recall some fundamental definitions from the theory of distances between closed subspaces
of Banach spaces.

Let $U$ be a Banach space. For two given closed subspaces $U_0$, $U_1$ of $U$ we let
\[
\text{dist}(U_0, U_1) := \sup_{\|u\|_U = 1} |\text{dist}(u, U_0) - \text{dist}(u, U_1)|,
\]
where for any $u \in U$,
\[
\text{dist}(u, U_j) = \inf_{u_j \in U_j} \|u - u_j\|_{U}, \quad\, j\in \{0,1\}.
\]

Let $U$, $V$ be Banach spaces and let $U_0$, $U_1$ and $V_0$, $V_1$ be closed subspaces of $U$ and $V$, respectively.
Let $H$ be a linear bounded operator from $U$ to $V$ which maps $U_j$ to $V_j$ for $j\in \{0,1\}$. Since $H(u + u_j)
= H(u) + H(u_j) \in H(u) + V_j$ for all $u_j \in U_j$, we can define quotient operators $H_{j}\colon U/U_j \to V/V_{j}$
for each $j\in \{0,1\}$ by
\[
H_j(u + U_j) :=  H(u) + V_j, \quad\, u + U_j \in U/U_{j}.
\]

In what follows the following theorem is the crucial tool. The proof is a straightforward minor modification of the
proof of Theorem 9 in \cite{KM}.

\begin{theorem}
\label{quotient}
Suppose that $H\colon U\to V$ maps $U_j$ to $V_j$ for each $j\in \{0, 1\}$, and the quotient operator $H_0\colon U/U_0 \to V/V_0$
is invertible. If
\[
\max\{{\rm{dist}}(U_0, U_1), {\rm{dist}}(V_0, V_1)\} < \frac{1}{2\big(1 +  \|H\|_{U\to V} \|H_{0}^{-1}\|_{V/V_0 \to U/U_0}\big)},
\]
then the quotient operator $H_1\colon U/U_1 \to V/V_1$ is invertible. Moreover the upper estimate for the norm of $H_1$ is given by
\[
\|H_{1}^{-1}\|_{V/V_1 \to U/U_1} \leq 2 \|H_{0}^{-1}\|_{V/V_0 \to U/U_0}.
\]
\end{theorem}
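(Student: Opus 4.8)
The plan is to deduce Theorem~\ref{quotient} from the Neumann series argument that underlies the classical perturbation theory of invertible operators, applied to the \emph{quotient} operators rather than to $H$ itself. First I would record the following elementary fact, which is the real engine: if $A\colon E\to F$ is an invertible bounded operator between Banach spaces and $B\colon E\to F$ is bounded with $\|B - A\|\cdot\|A^{-1}\| < 1$, then $B$ is invertible and $\|B^{-1}\| \leq \|A^{-1}\|/(1 - \|A^{-1}\|\|B-A\|)$; in particular $\|B^{-1}\|\leq 2\|A^{-1}\|$ as soon as $\|A^{-1}\|\|B-A\|\leq 1/2$. So the whole proof reduces to producing, from the hypothesis that $H_0\colon U/U_0\to V/V_0$ is invertible and that $\mathrm{dist}(U_0,U_1)$ and $\mathrm{dist}(V_0,V_1)$ are small, a bounded operator from $U/U_1$ to $V/V_1$ that is close to $H_1$ in operator norm and is built out of $H_0^{-1}$.

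The key step is to understand how the quotient norm on $U/U_1$ differs from that on $U/U_0$ when $\mathrm{dist}(U_0,U_1)$ is small. By definition of the distance functional, for every $u\in U$ one has $|\,\mathrm{dist}(u,U_1) - \mathrm{dist}(u,U_0)\,| \leq \mathrm{dist}(U_0,U_1)\,\|u\|_U$; since $\mathrm{dist}(u,U_j) = \|u + U_j\|_{U/U_j}$, this says the two quotient (semi)norms of the \emph{same representative} $u$ differ by at most $\mathrm{dist}(U_0,U_1)\|u\|_U$. The plan is to use the identity map on representatives, $u + U_0 \mapsto u + U_1$ — which is of course not well defined as a linear map, so more carefully one works on the level of $U$ itself and controls errors. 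Concretely: given $\xi = u + U_1 \in U/U_1$ with $\|\xi\|_{U/U_1}\leq 1$, choose $u\in U$ representing $\xi$ with $\|u\|_U \leq 1 + \epsilon$; then $\|u + U_0\|_{U/U_0} \leq \|u\|_U \leq 1+\epsilon$ but also, going the other way, any $v\in U$ with $v + U_0$ small forces $v + U_1$ comparably small up to the distance term. I would make this precise by showing that the canonical ``change of quotient'' correspondence is, in the appropriate sense, an isomorphism $U/U_0\to U/U_1$ with norm $\leq 1 + \mathrm{dist}(U_0,U_1)$ in each direction after composing suitably — or, cleanly, simply estimate $\big|\,\|H_1 \xi\|_{V/V_1} - \|H_0\eta\|_{V/V_0}\,\big|$ for matched representatives and feed that into the perturbation lemma.

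Here is the cleanest route I would actually write. Let $q_j^U\colon U\to U/U_j$ and $q_j^V\colon V\to V/V_j$ be the quotient maps. The operator $H$ descends to both $H_0$ and $H_1$ via $H_j\circ q_j^U = q_j^V\circ H$. Consider the composite $\Phi := q_1^V\circ H\circ (\text{a bounded lift } U/U_0\to U)$; lifts exist with norm arbitrarily close to $1$, but are nonlinear, so instead I pass to the statement that for all $u\in U$,
\[
\big\|\,q_1^V H u\,\big\|_{V/V_1} \;\leq\; \big\|\,q_0^V H u\,\big\|_{V/V_0} + \mathrm{dist}(V_0,V_1)\,\|Hu\|_V,
\]
and symmetrically with $0$ and $1$ exchanged, and similarly $\big\|\,q_1^U u\,\big\|_{U/U_1}$ versus $\big\|\,q_0^U u\,\big\|_{U/U_0}$ controlled by $\mathrm{dist}(U_0,U_1)\|u\|_U$. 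From these, and writing $d := \max\{\mathrm{dist}(U_0,U_1),\mathrm{dist}(V_0,V_1)\}$, one shows $H_1$ is bounded below on $U/U_1$: for $\xi = q_1^U u$ of norm one, pick $u$ with $\|u\|_U\leq 1+\epsilon$, so $\|q_0^U u\|_{U/U_0}\leq 1+\epsilon$, hence $\|q_0^V H u\|_{V/V_0}\geq \|H_0^{-1}\|^{-1}\|q_0^U u\|_{U/U_0}$ — no, one needs the reverse: $\|q_0^V H u\|_{V/V_0} = \|H_0 (q_0^U u)\|_{V/V_0} \geq \|H_0^{-1}\|^{-1}\,\|q_0^U u\|_{U/U_0}$, and one must also ensure $\|q_0^U u\|_{U/U_0}$ is not much smaller than $1$, which follows from $\|q_1^U u\|_{U/U_1} = 1$ together with the distance bound: $1 \leq \|q_0^U u\|_{U/U_0} + d\|u\|_U \leq \|q_0^U u\|_{U/U_0} + d(1+\epsilon)$. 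Chaining the inequalities and using $\|Hu\|_V\leq \|H\|\,\|u\|_U \leq \|H\|(1+\epsilon)$ yields
\[
\|H_1\xi\|_{V/V_1} \;\geq\; \|H_0^{-1}\|^{-1}\big(1 - d(1+\epsilon)\big) - d\,\|H\|(1+\epsilon),
\]
which is strictly positive precisely because $d < \big(2(1 + \|H\|\,\|H_0^{-1}\|)\big)^{-1}$; letting $\epsilon\to 0$ and unwinding gives the lower bound $\|H_1\xi\|_{V/V_1}\geq \tfrac12\|H_0^{-1}\|^{-1}\|\xi\|_{U/U_1}$, so $H_1$ is injective with closed range and $\|H_1^{-1}\|\leq 2\|H_0^{-1}\|$ on its range. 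Surjectivity of $H_1$ I would get by running the symmetric argument with the roles of $U/U_0\to U/U_1$ reversed: $H_0$ invertible plus the same smallness of $d$ makes the analogous estimate for $H_0$ in terms of $H_1$ work, showing $H_1^{\ast}$ is bounded below, or more directly that $\mathrm{ran}(H_1) = V/V_1$ by a density-plus-closedness argument using that $q_1^V\circ H$ already hits a dense subspace since $q_0^V\circ H$ is onto and $q_1^V, q_0^V$ have the same kernel structure up to the small distance.

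The main obstacle is precisely the bookkeeping in the step above: the ``change of quotient'' map is not a genuine linear operator, so one cannot literally write $H_1 = (\text{iso})\circ H_0\circ(\text{iso})$ and invoke the Neumann lemma as a black box. One has to carry the $\epsilon$'s and the additive $d\|\cdot\|$ error terms through by hand and check that the final inequality survives with the constant $\tfrac12$ and the threshold $\big(2(1+\|H\|\|H_0^{-1}\|)\big)^{-1}$ intact. Everything else — the two one-sided distance inequalities, boundedness of $H_1$, passing from ``bounded below with closed dense range'' to ``invertible'' — is routine. Since, as the paper notes, this is a minor modification of \cite[Theorem 9]{KM}, I would present the argument in this self-contained form and remark that the only change from \cite{KM} is tracking the explicit constant, which is what Theorems~\ref{inverse} and~\ref{uinverse} will later consume.
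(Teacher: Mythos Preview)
The paper does not actually supply a proof of Theorem~\ref{quotient}; it states the result and remarks that ``the proof is a straightforward minor modification of the proof of Theorem~9 in \cite{KM}.'' So there is no in-paper argument to compare your proposal against, only the reference to Krugljak--Milman.

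Your lower-bound computation is correct and gives exactly the stated constants: from $1 = \|q_1^U u\| \le \|q_0^U u\| + d\|u\|$ and $\|q_1^V Hu\| \ge \|q_0^V Hu\| - d\|Hu\|$ one gets, after $\epsilon\to 0$, $\|H_1\xi\| \ge \|H_0^{-1}\|^{-1} - d(\|H_0^{-1}\|^{-1} + \|H\|)$, which is $\ge \tfrac12\|H_0^{-1}\|^{-1}$ precisely under the stated threshold on $d$. That part is clean and is essentially the argument in \cite{KM}.

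The one place your sketch is genuinely loose is surjectivity. Neither of the two options you name quite works as stated: the ``symmetric dual'' route is not literally symmetric (the roles of $U$ and $V$ are not interchangeable without further work identifying $(U/U_j)^*$ with annihilators), and ``$q_0^V\circ H$ onto implies $q_1^V\circ H$ has dense range'' is not immediate. The standard fix---and presumably what \cite{KM} does---is an iterative approximation: given $\eta\in V/V_1$, choose a representative $v$ with $\|v\|\le (1+\epsilon)\|\eta\|$, set $\zeta = H_0^{-1}(v+V_0)$, lift to $u\in U$ with $\|u\|\le (1+\epsilon)\|H_0^{-1}\|\,\|v+V_0\|$, and put $\xi_0 = u+U_1$. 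Then $Hu - v\in V_0$, so $\|\eta - H_1\xi_0\|_{V/V_1} = \mathrm{dist}(v-Hu, V_1) \le d\,\|v-Hu\|_V \le d(1+\|H\|\,\|H_0^{-1}\|)(1+\epsilon)^2\|\eta\|$, giving a contraction factor strictly below $\tfrac12$; summing the geometric series recovers both surjectivity and the bound $\|H_1^{-1}\|\le 2\|H_0^{-1}\|$. This is the piece you should write out explicitly rather than label routine.
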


\vspace{2 mm}

Let $\bx$ be a~Banach couple of pseudolattices, $\bo$ a~Banach couple, and let $``\text{dist}$'' be a~distance defined
on closed subspaces of the space $\mathcal{F}_{\bx}(\bo)$, and let $s$, $\omega \in \mathbb{A}$. Then we define
\[
\rho\big(\bo_{\bx, s}, \bo_{\bx, \omega}\big) := \text{dist}\big(N_s(\bo),\, N_{\omega}(\bo)\big).
\]

The following variant of a result from \cite{KM} is relevant to our purposes.

\begin{theorem}
\label{dist}
Let $\bo$ be a~complex Banach couple. Then, for all $s\in \mathbb{A}$,
\begin{align*}
{\rm{dist}}\big((\cdot)_{\bx, s}, (\cdot)_{\bx, \omega}\big) :=
\sup_{\bo \vec{\mathcal{B}}}\rho\big(\bo_{\bx, s}, \bo_{\bx, \omega}\big) \leq \delta(s)\,|\omega - s|,
\quad\, \omega \in \mathbb{A},
\end{align*}
where $\delta(s) = \max\big\{(|s|-1)^{-1},\,(e - |s|)^{-1}\big\}$.
\end{theorem}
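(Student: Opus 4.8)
The plan is to reduce the estimate for $\mathrm{dist}\big((\cdot)_{\bx,s},(\cdot)_{\bx,\omega}\big)$ to a concrete bound on $\mathrm{dist}(u,N_\omega(\bo))$ for a unit-norm $u \in \mathcal{F}_{\bx}(\bo)$, and to produce the competitor for the infimum directly from the cancellation principle of Lemma~\ref{cancellation}. First I would fix a Banach couple $\bo$, fix $s,\omega \in \mathbb{A}$, and take any $u = f_b \in \mathcal{F}_{\bx}(\bo)$ with $\|u\|_{\mathcal{F}_{\bx}(\bo)} = 1$. The quantity $\mathrm{dist}(u, N_s(\bo))$ equals the quotient norm of $u + N_s(\bo)$, which by the isometry $\widehat{\delta_s}$ is exactly $\|u(s)\|_{\bo_{\bx,s}}$; similarly $\mathrm{dist}(u,N_\omega(\bo)) = \|u(\omega)\|_{\bo_{\bx,\omega}}$. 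So the whole problem is to show
\[
\big|\,\|u(s)\|_{\bo_{\bx,s}} - \|u(\omega)\|_{\bo_{\bx,\omega}}\,\big| \leq \delta(s)\,|\omega - s|
\]
for every unit-norm $u$, and then take the supremum over $\bo$ and over $u$.

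The key step is to construct, for a given $u$, an element $h \in N_\omega(\bo)$ with $\|u - h\|_{\mathcal{F}_{\bx}(\bo)} \leq \delta(s)\,|\omega-s|\,\|u\|$ whenever $u(s) = 0$; this is precisely what is needed to pass from the ``$s$-picture'' to the ``$\omega$-picture''. Concretely: given $u = f_b$ with $f_b(s)=0$, Lemma~\ref{cancellation} produces $g = f_b/(z-s) \in \mathcal{F}_{\bx}(\bo)$ with $\|g\|_{\mathcal{F}_{\bx}(\bo)} \leq \delta(s)\,\|f_b\|_{\mathcal{F}_{\bx}(\bo)}$. Then $(z-\omega)g(z)$ lies in $\mathcal{F}_{\bx}(\bo)$ (multiplication by $z$ corresponds to the shift $S$ on coefficient sequences, which is an isometry by translation invariance; multiplication by a scalar is harmless), it vanishes at $\omega$, and
\[
f_b(z) - (z-\omega)g(z) = \big[(z-s) - (z-\omega)\big] g(z) = (\omega - s)\,g(z),
\]
so $h := f_b - (\omega-s)g$ satisfies $h(\omega) = 0$, i.e.\ $h \in N_\omega(\bo)$, and $\|f_b - h\|_{\mathcal{F}_{\bx}(\bo)} = |\omega-s|\,\|g\| \leq \delta(s)\,|\omega-s|\,\|f_b\|$. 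Wait --- one must be careful that $h$ is really in $N_\omega(\bo)$ as defined, namely that $(z-\omega)g(z)$ has a Laurent expansion with coefficient sequence in $\mathcal{J}(\bx,\bo)$; this follows since $g \in \mathcal{F}_{\bx}(\bo)$ means its coefficient sequence is in $\mathcal{J}(\bx,\bo)$, and applying $S$ and scalar multiplication and subtraction keeps us in $\mathcal{J}(\bx,\bo)$. This shows $\mathrm{dist}(N_s(\bo), N_\omega(\bo)) = \rho(\bo_{\bx,s},\bo_{\bx,\omega}) \leq \delta(s)\,|\omega-s|$ by a standard symmetrization of the one-sided estimate.

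To finish I would spell out the symmetrization. Take a unit-norm $u \in \mathcal{F}_{\bx}(\bo)$. Pick $v \in N_s(\bo)$ nearly realizing $\mathrm{dist}(u,N_s(\bo))$, so $\|u-v\|$ is close to $\mathrm{dist}(u,N_s(\bo))$; apply the construction above to $u-v$ (which vanishes at $s$) to get $w \in N_\omega(\bo)$ with $\|(u-v)-w\| \leq \delta(s)\,|\omega-s|\,\|u-v\|$. Then $v + w \in N_\omega(\bo)$ after noting $v$ itself need not vanish at $\omega$ — so instead I would argue more cleanly: for $\xi \in N_\omega(\bo)$ chosen near-optimal for $\mathrm{dist}(u,N_\omega(\bo))$ and the analogous choice for $s$, use $\|u - (\text{projection-type element})\|$ comparisons. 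The cleanest route, which I would adopt, is: for any $\phi \in N_s(\bo)$, the displacement argument gives $\psi \in N_\omega(\bo)$ with $\|\phi - \psi\| \leq \delta(s)\,|\omega-s|\,\|\phi\|$ (apply the construction to $\phi$, which vanishes at $s$); hence $\mathrm{dist}(u,N_\omega(\bo)) \leq \|u - \psi\| \leq \|u-\phi\| + \delta(s)|\omega-s|\|\phi\|$, and taking the infimum over $\phi$ with a routine control of $\|\phi\|$ in terms of $\|u-\phi\|$ and $\|u\|=1$ yields $\mathrm{dist}(u,N_\omega(\bo)) - \mathrm{dist}(u,N_s(\bo)) \leq \delta(s)|\omega-s|$; interchanging roles of $s$ and $\omega$ (and noting the bound is stated with $\delta(s)$, which one keeps fixed by always displacing from the $s$-zero locus) gives the two-sided estimate, hence the bound on $\mathrm{dist}(N_s(\bo),N_\omega(\bo))$, and then one takes the supremum over all $\bo \in \vec{\mathcal{B}}$. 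The main obstacle I anticipate is the bookkeeping in this last symmetrization — making sure the constant stays exactly $\delta(s)$ (not something like $\delta(s)+\delta(\omega)$ or a factor of $2$) and that the near-optimal elements' norms are controlled — but the analytic heart, the factor $g = f_b/(z-s)$ together with the identity $(z-s)-(z-\omega) = \omega-s$, is entirely supplied by Lemma~\ref{cancellation} and costs nothing extra.
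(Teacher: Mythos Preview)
Your approach is essentially the paper's: both identify $\rho(\bo_{\bx,s},\bo_{\bx,\omega})$ with $\sup_{\|f\|\le 1}\big|\|f(s)\|_{\bo_{\bx,s}}-\|f(\omega)\|_{\bo_{\bx,\omega}}\big|$ via the quotient isometry $\widehat{\delta}_s$, and both invoke Lemma~\ref{cancellation} on a function vanishing at $s$ (your near-optimal $\phi\in N_s(\bo)$ is exactly the paper's $f-f_x$, where $f_x$ is a near-optimal representative of the value $f(s)$). The paper bypasses the explicit ``displace $N_s$ to $N_\omega$'' framing by writing $f(\omega)=f_x(\omega)+(\omega-s)h(\omega)$ and estimating the two terms directly, which short-circuits the symmetrization bookkeeping you worry about, but the analytic content---and the subtlety about the exact constant you flag---is the same.
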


\begin{proof} We have
\begin{align*}
\rho\big(\bo_{\bx, s}, \bo_{\bx, \omega}\big) & = \sup_{\|f\|_{\mathcal{F}_{\bx}(\bo)}
\leq 1} \big|\rho\big(f, N_s(\bo)\big) - \rho\big(f, N_{\omega}(\bo)\big)\big| \\
& = \sup_{\|f\|_{\mathcal{F}_{\bx}(\bo)} \leq 1} \Big |\big\|f+ N_s(\bo)\big\|_{\mathcal{F}_{\bx}(\bo)/N_s(\bo)}
-\big\|f+ N_{\omega}(\bo)\big\|_{\mathcal{F}_{\bx}(\bo)/N_{\omega}(\bo)}\Big| \\
& =\sup_{\|f\|_{\mathcal{F}_{\bx}(\bo)} \leq 1} \Big|\|f(s)\|_{\bo_{\bx, s}}  - \|f(\omega)\|_{\bo_{\bx, \omega}}\Big|.
\end{align*}
Let $f \in \mathcal{F}_{\bx}(\bo)$ be such that $\|f\|_{\mathcal{F}_{\bx}(\bo)} \leq 1$, and let $x=f(s)$.
Given $\varepsilon >0$ select $f_x \in \mathcal{F}_{\bx}(\bo)$ such that
\[
f_x(s) = x, \quad\, \|f_x\|_{\mathcal{F}_{\bx}(\bo)} \leq \|x\|_{\bo_{\bx, s}} + \varepsilon.
\]
In particular we have $\|f_x\|_{\mathcal{F}_{\bx}(\bo)} \leq 1 + \varepsilon$. Since $f(s) - f_x(s)=0$,
it follows from  Lemma \ref{cancellation} that the function $h$ defined by $h(s) = f'(s) - f_x'(s)$ and
\[
h(z) = \frac{f(z)- f_x(z)}{z-s}, \quad\, z\in
\mathbb{A}\setminus\{s\}
\]
is in $\mathcal{F}_{\bx}(\bo)$, and
\[
\|h\|_{\mathcal{F}_{\bx}(\bo)} \leq \delta(s)
\]
for some positive constant $\delta(s) \leq \max\big\{(|s|-1)^{-1},\,(e - |s|)^{-1}\big\}$.

Now observe that
\[
f(\omega) - f_x(\omega) = (\omega-s)h(\omega), \quad\, \omega\in \mathbb{A}
\]
and so
\[
\|f(\omega)- f_x(\omega)\|_{\bo_{\bx, \omega}} \leq |\omega-s| \|h(\omega)\|_{\bo_{\bx,
\omega}} \leq \delta(s)\,|\omega - s|.
\]
Combining the above facts with the triangle inequality yields that, for all $\omega\in \mathbb{A}$,
\begin{align*}
\|f(\omega)\|_{\bo_{\bx, \omega}} & \leq \|f_x(\omega)\|_{\bo_{\bx, \omega}} + \delta(s)\,|\omega -s| \\
& \leq \|f_x\|_{\mathcal{F}_{\bx}(\bo)} + \delta(s)\,|\omega - s| \\
& \leq \|x\|_{\bo_{\bx, s}} + \delta(s) \,|\omega - s| + \varepsilon \\
& \leq \|f(s)\|_{\bo_{\bx, s}} + \delta(s) \,|\omega - s| + \varepsilon.
\end{align*}
Since $\varepsilon$ is arbitrary, we get
\[
\Big|\|f(\omega)\|_{\bo_{\bx, \omega}} -  \|f(s)\|_{\bo_{\bx, s}}\Big|
\leq \delta(s) \,|\omega - s|, \quad\, \omega\in \mathbb{A},
\]
and this completes the proof.
\end{proof}

\noindent
We are ready for the  proof Theorem \ref{inverse}.

\begin{proof}[Proof of Theorem {\rm{\ref{inverse}}}]
For $\omega \in \mathbb{A}$ define the operator $\widetilde{T}_{\omega}\colon
\mathcal{F}_{\bx}(\xo)/N_{\omega}(\xo) \to \mathcal{F}_{\bx}(\yo)/N_{\omega}(\yo)$ by
\[
\widetilde{T}_w(f+ N_{\omega}(\xo)) = \widetilde{T}f + N_{\omega}(\yo), \quad\, f+ N_{\omega}(\xo)\in
\mathcal{F}_{\bx}(\xo)/N_{\omega}(\xo),
\]
where $\widetilde{T}\colon \mathcal{F}_{\bx}(\xo) \to \mathcal{F}_{\bx}(\yo)$ is the operator given by
\[
\widetilde{T}f(z)= T(f(z)), \quad\, f\in \mathcal{F}_{\bx}(\xo), \quad\, z\in \mathbb{A}.
\]
We note that $\|\widetilde{T}\|_{\mathcal{F}_{\bx}(\xo) \to \mathcal{F}_{\bx}(\yo)}
\leq \|T\|_{\xo \to \yo}=\max_{j=0, 1}\|T\|_{X_j \to Y_j}$ and
\[
\tag {$*$}\, \|\widetilde{T}_{\omega}\|_{\mathcal{F}_{\bx}(\xo)/N_{\omega}(\xo) \to \mathcal{F}_{\bx}(\yo)/N_{\omega}(\yo)}
= \|T\|_{\xo_{\bx, \omega} \to \yo_{\bx, \omega}}.
\]

Now we fix $s\in \mathbb{A}$. Then, from Theorem \ref{dist}, we conclude that for
$\delta(s) = \max\big\{(|s|-1)^{-1},\,(e - |s|)^{-1}\big\}$ we have
\[
{\rm{dist}}\big((\cdot)_{\bx, s}, (\cdot)_{\bx, \omega}\big) \leq \delta(s) |\omega- s|, \quad\,\omega\in \mathbb{A}.
\]
Let $W := \{\omega\in \mathbb{A};\, |\omega-s| < r\}$ be an open neighbourhood of $s$ in $\mathbb{A}$ with
\[
r = \big(2\delta(s) + 2\delta(s)\,\|T^{-1}\|_{\yo_{\bx, s} \to \xo_{\bx, s}} \|T\|_{\xo \to \yo}\big)^{-1}.
\]
Then, we have
\[
{\rm{dist}}\big((\cdot)_{\bx, s}, (\cdot)_{\bx, \omega}\big) < \frac{1}
{2\big(1 +  \|T\|_{\xo \to \yo} \|T^{-1}\|_{\yo_{\bx, s} \to \xo_{\bx, s}}\big)}, \quad\, \omega\in W.
\]
Combining the above with Theorem \ref{quotient} applied to the Banach spaces $U= \mathcal{F}_{\bx}(\xo)$,
$V= \mathcal{F}_{\bx}(\yo)$, closed subspaces $U_{0} = N_s(\xo)$, $U_{1} = N_{\omega}(\xo)
\subset U$ and $V_{0} = N_s(\yo)$, $V_{1} = N_{\omega}(\yo) \subset V$  with $\omega\in W$, and operators
$H = \widetilde{T}$, $H_{0} = \widetilde{T}_s$, $H_{1} = \widetilde{T}_{\omega}$, we obtain the desired
statement for shown above open neighborhood $W\subset \mathbb{A}$ of $s$.

To get the required estimate for the norm of $T_{\omega}^{-1}$ for all $\omega \in W$, we first observe that following
the above notation, it follows from equality $(*)$ that $\|H\|_{U\to V} = \|T\|_{\xo \to \yo}$ and
\[
\|H_{1}^{-1}\|_{V/V_1 \to U/U_1}= \|T_{\omega}^{-1} \|_{\yo_{\bx, \omega} \to \xo_{\bx, \omega}}, \quad\,
\|H_{0}^{-1}\|_{V/V_0 \to U/U_0}= \|T_{\omega}^{-1} \|_{\yo_{\bx, s} \to \xo_{\bx, s}}.
\]
By Theorem \ref{dist}, for all $\omega \in W$, we have
\[
\max\{{\rm{dist}}(U_0, U_1),  {\rm{dist}}(V_0, V_1)\} \leq
{\rm{dist}}\big((\cdot)_{\bx, s}, (\cdot)_{\bx, \omega}\big) < \frac{1}{2(1 +  \|T\|_{\xo \to \yo}
\|T^{-1}\|_{\yo_{\bx, s} \to \xo_{\bx, s}})}.
\]
To finish, we apply Theorem \ref{quotient} to get the required estimate.
\end{proof}

\vspace{2 mm}

We isolate the following simple proposition for further reference.

\begin{proposition}
\label{RN} Let $\bx = (\mathcal{X}_0, \mathcal{X}_1)$ be a~couple of pseudolattices and let $\yo$ be a~Banach couple.
Then, for every $\omega \in \mathbb{A}$, the operator $V_{\omega}\colon \mathcal{F}_{\bx}(\yo) \to \mathcal{F}_{\bx}(\yo)$
defined by
\[
(V_{\omega}f)(z) = (\omega - z)f(z), \quad\, f\in \mathcal{F}_{\bx}(\yo), \quad\, z\in \mathbb{A}
\]
is injective and it has closed range with $R(V_{\omega}) = N_{\omega}(\yo)$.
\end{proposition}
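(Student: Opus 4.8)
The plan is to identify $V_\omega$ with the two-sided inverse of the ``division by $z-\omega$'' operator furnished by the cancellation Lemma~\ref{cancellation}, and to read all three assertions off that identification. The first step is to check that $V_\omega$ really maps $\mathcal{F}_{\bx}(\yo)$ into itself and is bounded. If $f$ corresponds to $\{b_n\}\in\mathcal{J}(\bx,\yo)$, i.e.\ $f(z)=\sum_{n}z^{n}b_n$, then $(V_\omega f)(z)=\sum_{n}z^{n}(\omega b_n-b_{n-1})$, so $V_\omega f$ is the function attached to the sequence $\omega\{b_n\}-S^{-1}\{b_n\}$. Since the shift operators $S^{\pm1}$ are isometries of each $\mathcal{X}_j(B_j)$, this sequence again lies in $\mathcal{J}(\bx,\yo)$ and $V_\omega$ is bounded on $\mathcal{F}_{\bx}(\yo)$; this is the one point where more than the bare pseudolattice axioms is used.

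Next I would establish injectivity together with closedness of the range by showing that $V_\omega$ is bounded below. Fix $f\in\mathcal{F}_{\bx}(\yo)$ and put $g:=V_\omega f$, so that $g(z)=(\omega-z)f(z)$ and, in particular, $g(\omega)=0$. Applying Lemma~\ref{cancellation} with $s=\omega$ to $g$, the analytic extension of $z\mapsto g(z)/(z-\omega)$ lies in $\mathcal{F}_{\bx}(\yo)$ with norm at most $\delta(\omega)\,\|g\|_{\mathcal{F}_{\bx}(\yo)}$; but that extension is exactly $-f$, whence $\|f\|_{\mathcal{F}_{\bx}(\yo)}\le\delta(\omega)\,\|V_\omega f\|_{\mathcal{F}_{\bx}(\yo)}$. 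A bounded-below operator is injective and has closed range. (Injectivity alone is even more elementary: $(\omega-z)f(z)\equiv0$ on $\mathbb{A}$ forces $f\equiv0$, since $\omega-z$ vanishes only at the single point $\omega$ and $f$ is continuous on $\mathbb{A}$.)

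It remains to compute the range. The inclusion $R(V_\omega)\subseteq N_\omega(\yo)$ is immediate from $(V_\omega f)(\omega)=(\omega-\omega)f(\omega)=0$. For the reverse inclusion, take $h\in N_\omega(\yo)$, that is, $h\in\mathcal{F}_{\bx}(\yo)$ with $h(\omega)=0$. By Lemma~\ref{cancellation} (again with $s=\omega$), the function $f$ given by $f(z)=-h(z)/(z-\omega)$ for $z\neq\omega$ and $f(\omega)=-h'(\omega)$ belongs to $\mathcal{F}_{\bx}(\yo)$, and $(V_\omega f)(z)=(\omega-z)\bigl(-h(z)/(z-\omega)\bigr)=h(z)$ off $\omega$, hence $V_\omega f=h$ by continuity. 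Thus $N_\omega(\yo)\subseteq R(V_\omega)$, so $R(V_\omega)=N_\omega(\yo)$, and the proof is complete.

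I do not anticipate a genuine obstacle here: once Lemma~\ref{cancellation} is in hand, everything is formal. The two points deserving a little care are the first step — that multiplication by $\omega-z$ keeps one inside $\mathcal{F}_{\bx}(\yo)$, which is where the translation invariance in force throughout this section enters — and keeping signs straight in the two invocations of the cancellation lemma (to $g=V_\omega f$ for the lower bound, and to $h$ itself for surjectivity onto $N_\omega(\yo)$). As an aside, closedness of $R(V_\omega)$ can also be obtained directly, since $N_\omega(\yo)=\ker\delta_\omega$ is the kernel of the bounded evaluation map $\delta_\omega\colon\mathcal{F}_{\bx}(\yo)\to Y_0+Y_1$.
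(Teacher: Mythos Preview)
Your proof is correct and follows essentially the same route as the paper's: well-definedness via the shift action on Laurent coefficients, elementary injectivity, the obvious inclusion $R(V_\omega)\subseteq N_\omega(\yo)$, and the reverse inclusion via Lemma~\ref{cancellation}. The one minor difference is that the paper deduces closedness of the range directly from the identification $R(V_\omega)=N_\omega(\yo)=\ker\delta_\omega$ (exactly what you mention as an aside), whereas you first extract the quantitative lower bound $\|f\|\le\delta(\omega)\|V_\omega f\|$ from a second application of Lemma~\ref{cancellation}; this is a harmless redundancy and in fact supplies the explicit estimate $\gamma(V_\omega)\ge\delta(\omega)^{-1}$, which is more than the paper records here.
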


\begin{proof} We first remark that our hypothesis on $\bx$ yields that a~function  $\mathbb{A} \ni z\mapsto z f(z)
\in \mathcal{F}_{\bx}(\yo)$ for any $f\in \mathcal{F}_{\bx}(\yo)$. Thus the domain $D(V_{\omega}) = \mathcal{F}_{\bx}(\yo)$.
Clearly, $V_{\omega}f= 0$ for $f\in \mathcal{F}_{\bx}(\yo)$ implies that $f(z)= 0$ for all $z\in \mathbb{A}\setminus \{\omega\}$
and whence $f= 0$ by continuity of $f$.

It is obvious that the range satisfies
\[
R(V_{\omega}) \subset \{g\in \mathcal{F}_{\bx}(\yo);\, g(\omega) = 0\} = N_{\omega}(\yo).
\]
To show the reverse inclusion let $g\in \mathcal{F}_{\bx}(\yo)$ with $g(\omega)=0$. It follows from Lemma \ref{cancellation}
that there exists a function $f\in \mathcal{F}_{\bx}(\yo)$ such that
\[
g(z)= (\omega- z)f(z), \quad\, z\in \mathbb{A}.
\]
Thus, we get that $g= V_{\omega}f$ and so the desired equality $R(V_{\omega}) = N_{\omega}(\yo)$ holds. Since
$N_{\omega}(\yo)$ is a~closed subspace in $\mathcal{F}_{\bx}(\yo)$, the proof is complete.
\end{proof}

We prove a~lemma which will play a~key role in the proof of the main result, Theorem \ref{uinverse}. In the proof
we will use some methods from \cite[Theorem 4]{AM}. We recall that if $S\colon X \to Y$ is a~bounded linear operator
between Banach spaces, then, the so called \emph{lower bound} of $S$ is defined by
\[
\gamma(S)= \inf \{\|Sx\|_{Y};\, x\in X, \, \|x\|_X = 1\}.
\]
It is obvious that $\gamma(S) > 0$ if, and only if, $S$ is injective and the range $R(S)$ of $S$ is a~closed
subspace in $Y$.

\begin{lemma}
\label{onto}
Let $\bx$ be a~couple of pseudolattices and let $\xo= (X_0, X_1)$, $\yo = (Y_0, Y_1)$ be complex Banach
couples, $T\colon \xo \to \yo$ and $s \in \mathbb{A}$. Assume that $T\colon \xo_{\bx, s} \to \yo_{\bx, s}$
is invertible. Then, there exists an open neighborhood $U\subset \mathbb{A}$ of $s$ such that, for all
$k\in \mathcal{F}_{\bx}(\yo)$, there exist analytic functions $g\colon U \to \mathcal{F}_{\bx}(\xo)$
and $h\colon U \to \mathcal{F}_{\bx}(\yo)$ such that, for all $\omega \in U$,
\[
T(g(\omega)(z)) + (\omega - z) h(\omega)(z)=k(z), \quad\, z\in \mathbb{A}.
\]
\end{lemma}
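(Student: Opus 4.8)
The plan is to turn the equation into a Neumann–type iteration on the function spaces $\mathcal{F}_{\bx}(\cdot)$, using the elementary but crucial identity $(\omega-z)f(z)=(s-z)f(z)+(\omega-s)f(z)$; that is, the operator $V_\omega$ of Proposition~\ref{RN} satisfies $V_\omega=V_s+(\omega-s)\,\mathrm{Id}$ on $\mathcal{F}_{\bx}(\yo)$. First I would record a \emph{solving operator at $s$}. Writing $\widetilde{T}f(z):=T(f(z))$ for the bounded operator $\widetilde{T}\colon\mathcal{F}_{\bx}(\xo)\to\mathcal{F}_{\bx}(\yo)$, the isometric isomorphisms $\widehat{\delta}_s$ on the $\xo$- and $\yo$-sides identify the quotient operator $\widetilde{T}_s\colon\mathcal{F}_{\bx}(\xo)/N_s(\xo)\to\mathcal{F}_{\bx}(\yo)/N_s(\yo)$ with $T\colon\xo_{\bx,s}\to\yo_{\bx,s}$; hence $\widetilde{T}_s$ is invertible, in particular onto. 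Consider $L\colon\mathcal{F}_{\bx}(\xo)\oplus\mathcal{F}_{\bx}(\yo)\to\mathcal{F}_{\bx}(\yo)$, $L(g,h)=\widetilde{T}g+V_s h$. Given $k\in\mathcal{F}_{\bx}(\yo)$, surjectivity of $\widetilde{T}_s$ yields $g$ with $k-\widetilde{T}g\in N_s(\yo)$, and since $R(V_s)=N_s(\yo)$ by Proposition~\ref{RN}, there is $h$ with $V_sh=k-\widetilde{T}g$; thus $L$ is onto. By the open mapping theorem there is a constant $C>0$, \emph{depending only} on $T,\bx,s,\xo,\yo$ and not on $k$, such that every $w\in\mathcal{F}_{\bx}(\yo)$ admits a preimage $(g,h)$ under $L$ with $\|g\|+\|h\|\le C\|w\|$.

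Now fix $k$ and build the solution as a power series in $\omega-s$. Choose $(g_0,h_0)$ with $L(g_0,h_0)=k$ and $\|g_0\|+\|h_0\|\le C\|k\|$, and recursively, for $n\ge1$, choose $(g_n,h_n)$ with $L(g_n,h_n)=-h_{n-1}$ and $\|g_n\|+\|h_n\|\le C\|h_{n-1}\|$. An easy induction gives $\|g_n\|,\|h_n\|\le C^{\,n+1}\|k\|$, so the series
\[
g(\omega):=\sum_{n\ge0}(\omega-s)^n g_n,\qquad h(\omega):=\sum_{n\ge0}(\omega-s)^n h_n
\]
converge in $\mathcal{F}_{\bx}(\xo)$, respectively $\mathcal{F}_{\bx}(\yo)$, for $|\omega-s|<1/C$ and define analytic functions on $U:=\mathbb{A}\cap\{\omega:\,|\omega-s|<1/C\}$, which is the required neighbourhood — the same for every $k$.

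Finally, for $\omega\in U$ both series converge absolutely, so applying the bounded operators $\widetilde{T}$, $V_s$ and multiplication by the scalar $(\omega-s)$ termwise and regrouping yields
\begin{align*}
\widetilde{T}g(\omega)+V_\omega h(\omega)
&=\big(\widetilde{T}g_0+V_s h_0\big)+\sum_{n\ge1}(\omega-s)^n\big(\widetilde{T}g_n+V_s h_n+h_{n-1}\big)\\
&=k,
\end{align*}
since $\widetilde{T}g_0+V_sh_0=L(g_0,h_0)=k$ and $\widetilde{T}g_n+V_sh_n+h_{n-1}=L(g_n,h_n)+h_{n-1}=0$ for $n\ge1$ by the choices made. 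Unravelling the definitions of $\widetilde{T}$ and $V_\omega$ pointwise in $z$ gives $T(g(\omega)(z))+(\omega-z)h(\omega)(z)=k(z)$ for all $z\in\mathbb{A}$, as required. The one substantive point is the surjectivity of $L$ in the first paragraph — which rests on the hypothesis that $T_s$ is invertible together with $R(V_s)=N_s(\yo)$ — and the fact that the open mapping theorem supplies a \emph{$k$-independent} constant $C$, which is exactly what makes $U$ uniform in $k$. Note that no bounded \emph{linear} right inverse of $L$ is needed (such would force $N_s(\xo)$ to be complemented in $\mathcal{F}_{\bx}(\xo)$); since $g$ and $h$ are allowed to depend on $k$, the bounded, possibly nonlinear, selection furnished by the open mapping theorem suffices.
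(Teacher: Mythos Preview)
Your proof is correct and follows essentially the same approach as the paper's: both set up and solve the same recursion $\widetilde{T}g_0+V_sh_0=k$, $\widetilde{T}g_n+V_sh_n=-h_{n-1}$ with geometrically controlled norms, and then sum the resulting power series in $\omega-s$. The only difference is that the paper obtains the growth constants explicitly from $\|\widetilde{T}_s^{-1}\|$ and the lower bound $\gamma(V_s)$ of Proposition~\ref{RN}, whereas you package both into a single constant via the open mapping theorem applied to the surjection $L$ --- a~harmless and arguably cleaner shortcut (the paper also intersects its neighbourhood with the set $W$ from Theorem~\ref{inverse}, but that is only for the subsequent application and is not needed for the lemma as stated).
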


\begin{proof} From Proposition \ref{RN}, it follows that the injective operator $V_{\omega}\colon
\mathcal{F}_{\bx}(\yo) \to \mathcal{F}_{\bx}(\yo)$ given for every $\omega\in \mathbb{A}$ by
\[
V_{\omega}f(z)= (\omega-z)f(z), \quad\, f\in \mathcal{F}_{\bx}(\yo), \quad\, z\in \mathbb{A}.
\]
has the closed range $R(V_{\omega})= N_{\omega}(\yo)$. Thus, the lower bound $\gamma(V_{\omega})>0$ for
all $\omega\in \mathbb{A}$. Since
\[
V_{\omega} = (\omega-s)\,{\rm{id}}_{\mathcal{F}_{\bx}(\yo)} + V_{s}, \quad\, \omega\in \mathbb{A},
\]
$\mathbb{A} \ni \omega \mapsto V_{\omega} \in L\big(\mathcal{F}_{\bx}(\yo), \mathcal{F}_{\bx}(\yo)\big)$
is an analytic function.

We shall adopt notations from Theorem \ref{inverse}. Thus we will consider operators
$\widetilde{T}\colon \mathcal{F}_{\bx}(\xo) \to \mathcal{F}_{\bx}(\yo)$ and
$\widetilde{T}_\omega \colon \mathcal{F}_{\bx}(\xo)/N_{\omega}(\xo)
\to \mathcal{F}_{\bx}(\yo)/N_{\omega}(\yo)$, where $\omega \in \mathbb{A}$.

We note that $\|\widetilde{T}\|_{\mathcal{F}_{\bx}(\xo) \to
\mathcal{F}_{\bx}(\yo)} \leq \max_{j=0, 1}\|T\|_{X_j \to Y_j}$ and
\[
\|\widetilde{T}_{\omega}\|_{\mathcal{F}_{\bx}(\xo)/N_{\omega}(\xo) \to \mathcal{F}_{\bx}(\yo)/N_{\omega}(\xo)}
= \|T\|_{\xo_{\bx, \omega} \to \yo_{\bx, \omega}}.
\]
Let $c_1$ and $c$ be positive constants such that
\[
c_1> \|T_{s}^{-1}\| \quad\, \text{and} \quad\, c> \big(1 + c_1\|\widetilde{T}\|\big)\gamma(V_{s})^{-1},
\]
where, for simplicity of notation, we let $\|T_{s}^{-1}\| = \|T^{-1}\|_{\yo_{\bx, s} \to \xo_{\bx,s}}$,
$\|\widetilde{T}\| =\|\widetilde{T}\|_{\mathcal{F}_{\bx}(\xo) \to \mathcal{F}_{\bx}(\yo)}$.

It follows from Theorem \ref{inverse} that there exists an open neighbourhood $W\subset \mathbb{A}$ of $s$
such that $T_{\omega}\colon \xo_{\bx, \omega} \to \xo_{\bx, \omega}$ is invertible for all $\omega\in W$.

We claim that an open neighborhood $U\subset \mathbb{A}$ of $s$ given by
\[
U:= \big\{\omega\in \mathbb{A}; \, |\omega-s| < c^{-1}\big\} \cap W
\]
satisfies the required statements, i.e., there exist analytic function $g\colon U \to \mathcal{F}_{\bx}(\xo)$
and $h\colon U \to \mathcal{F}_{\bx}(\yo)$ such that
\[
\widetilde{T}g(\omega) + V_{\omega}h(\omega) = k, \quad\, \omega \in U.
\]
To see this fix $k\in \mathcal{F}_{\bx}(\yo)$ and observe that, if  $g(\omega) =
\sum_{n=0}^{\infty} g_n (\omega - s)^n$ and $h(\omega)= \sum_{n=0}^{\infty}h_n (\omega-s)^n$ are the Taylor
expansions of $g$ and $h$ about $s$, then solution of the required equation
\[
\widetilde{T}g(\omega) + V_{\omega}h(\omega) = k, \quad\, \omega\in U
\]
with $g$ and $h$ in the form given above reduces to solution of the following recurrence equations generated
by the sequences $\{g_n\}\subset \mathcal{F}_{\bx}(\xo)$ and $\{h_n\}\subset \mathcal{F}_{\bx}(\yo)$ of
Taylor's coefficients of $g$ and $h$, respectively
\[
\widetilde{T}g_0 + V_{s}h_0 = k,
\]
\[
\widetilde{T}g_n + V_{s}h_n = - h_{n-1}, \quad\, n\in \mathbb{N}
\]
such that the both series $\sum_{n=0}^{\infty} g_n (\omega - s)^n$ and $h(\omega)= \sum_{n=0}^{\infty}h_n (\omega-s)^n$
converge in $U$.

Our hypothesis on invertibility of $T_s\colon \xo_{\bx, s} \to \yo_{\bx, s}$ implies that
\[
\widetilde{T}_s \colon \mathcal{F}_{\bx}(\xo)/N_s(\xo)
\to \mathcal{F}_{\bx}(\yo)/N_s(\yo)
\]
is also invertible. Thus, there exists $f\in \mathcal{F}_{\bx}(\xo)$ such that
\[
\widetilde{T}_s\big(f + N_s(\xo)\big) = k + N_s(\yo).
\]
Combining with $\|\widetilde{T}_{\omega}^{-1}\| = \|T^{-1}\|_{\yo_{\bx, {\omega}} \to
\xo_{\bx, {\omega}}}$ for all $\omega \in \mathbb{A}$, we conclude that
\begin{align*}
\big\|f + N_s(\xo)\big\|_{\mathcal{F}_{\bx}(\xo)/N_s(\xo)} & =
\big\|\widetilde{T_s}^{-1}\big(\widetilde{T}_s\big(f + N_s(\xo)\big)\big)\big\|_{\mathcal{F}_{\bx}(\xo)/N_s(\xo)} \\
& \leq \|T_s^{-1}\|\,\big\|k + N_s(\yo)\big\|_{\mathcal{F}_{\bx}(\yo)/N_s(\yo)} \\
& \leq \|T_s^{-1}\|\,\|k\|_{\mathcal{F}_{\bx}(\yo)}.
\end{align*}
Now let $\varepsilon = \frac{c_1}{\|T_s^{-1}\|}-1$. Then, there exists $f_0\in N_s(\xo)$
such that
\begin{align*}
\|f-f_0\|_{\mathcal{F}_{\bx}(\xo)} & \leq (1 + \varepsilon) \big\|f + N_s(\xo)\big\|_{\mathcal{F}_{\bx}(\xo)/N_s(\xo)}  \\
& = \frac{c_1}{\|T_s^{-1}\|} \big\|f + N_s(\xo)\big\|_{\mathcal{F}_{\bx}(\xo)/N_s(\xo)} \leq c_1 \|k\|_{\mathcal{F}_{\bx}(\yo)}.
\end{align*}

Hence for $g_0 := f-f_0 \in \mathcal{F}_{\bx}(\xo)$, we have $g_0 + N_s(\xo) = f + N_s(\xo)$ and
\[
\|g_0\|_{\mathcal{F}_{\bx}(\xo)} \leq c_{1} \|k\|_{\mathcal{F}_{\bx}(\yo)}.
\]
Clearly this yields \big(by $\widetilde{T}f_0(s) = 0$ and $g_0 + N_s(\xo) = f + N_s(\xo)$\big)
\[
k - \widetilde{T}g_0 \in N_s(\yo),
\]
and
\[
\tag {$*$} \, \|\widetilde{T}g_0 - k\|_{\mathcal{F}_{\bx}(\yo)} \leq \|k\|_{\mathcal{F}_{\bx}(\yo)} \big(1 + c_1
\|\widetilde{T}\|\big).
\]
We claim that there exists $h_0 \in \mathcal{F}_{\bx}(\yo)$ such that
\[
V_{s}h_0 = k - \widetilde{T}g_0, \quad\, \|h_0\|_{\mathcal{F}_{\bx}(\yo)} \leq c \|k\|_{\mathcal{F}_{\bx}(\yo)}.
\]
To see this observe that, for all  $h\in \mathcal{F}_{\bx}(\yo)$, we have
\[
\gamma(V_s)\,\|h\|_{\mathcal{F}_{\bx}(\yo)} \leq \|V_{s}h\|_{\mathcal{F}_{\bx}(\yo)}.
\]
According to Proposition \ref{RN}, we can find \big(by $R(V_s)= N_s(\yo)$\big) $h_0\in \mathcal{F}_{\bx}(\yo)$
such that
\[
V_{s}h_0 = k  - \widetilde{T}g_0.
\]
Then by estimate $(*)$, one has
\begin{align*}
\|h_0\|_{\mathcal{F}_{\bx}(\yo)} & \leq \frac{1}{\gamma(V_s)} \|V_{s}h_0\|_{\mathcal{F}_{\bx}(\yo)} \\
& \leq \frac{c}{1 + c_1\|\widetilde{T}\|}\,\|k  - \widetilde{T}g_0\|_{\mathcal{F}_{\bx}(\yo)} \\
& \leq c \|k\|_{\mathcal{F}_{\bx}(\yo)}.
\end{align*}
In consequence, we deduce that the claim holds for $h_0$.

Similarly we find $g_1\in \mathcal{F}_{\bx}(\xo)$ and $h_1 \in \mathcal{F}_{\bx}(\yo)$ such that
\[
\widetilde{T}g_1 + V_{s}h_1 = - h_0
\]
and
\[
\|g_1\|_{\mathcal{F}_{\bx}(\xo)} \leq c_1c\|k\|_{\mathcal{F}_{\bx}(\yo)}, \quad\,\,
\|h_1\|_{\mathcal{F}_{\bx}(\yo)} \leq c^2 \|k\|_{\mathcal{F}_{\bx}(\yo)}.
\]
Now continuing the process, we construct sequences $\{g_n\}\subset \mathcal{F}_{\bx}(\xo)$,
$\{h_n\} \subset \mathcal{F}_{\bx}(\yo)$ such that, for each $n\in \mathbb{N}$ we have
\[
\|g_n\|_{\mathcal{F}_{\bx}(\xo)} \leq c_1 c^{n-1} \|k\|_{\mathcal{F}_{\bx}(\yo)},  \quad\,\,
\|h_n\|_{\mathcal{F}_{\bx}(\yo)} \leq c^n \|k\|_{\mathcal{F}_{\bx}(\yo)}.
\]
This implies that the functions $g\colon U \to \mathcal{F}_{\bx}(\xo)$ and $h\colon U \to \mathcal{F}_{\bx}(\yo)$,
given by
\[
g(\omega) = \sum_{n=0}^{\infty} g_n (\omega - s)^n, \quad\, h(\omega)=
\sum_{n=0}^{\infty} h_n (\omega-s)^n, \quad\, \omega\in U
\]
are analytic in $U$ and satisfy the desired statement.
\end{proof}

\vspace{2 mm}
Now we are ready to proof of Theorem $\ref{uinverse}$.

\begin{proof}
For a fixed $y\in Y_0 \cap Y_1$ let $k$ be a~constant function given by $k(z)= y$ for all $z\in \mathbb{A}$.
Since $k\in \mathcal{F}_{\bx}(\yo)$, it follows from Lemma \ref{onto} that there exist an open neighborhood
$U\subset \mathbb{A}$ of $s$ and analytic functions $g\colon U \to \mathcal{F}_{\bx}(\xo)$, $h\colon U \to
\mathcal{F}_{\bx}(\yo)$ such that, for all $\omega \in U$ and all $z\in \mathbb{A}$, we have
\[
T(g(\omega)(z)) + (\omega - z)\,h(\omega)(z) = y.
\]
Define a~function $\widetilde{g}\colon U \to X_0 + X_1$ by
\[
\widetilde{g}(\omega) = g(\omega)(\omega), \quad\, \omega\in U.
\]
Then $\widetilde{g}$ is analytic in $U$ and $T(\widetilde{g}(\omega)) = y$ by the above formula. Further,
$g(\omega) \in \mathcal{F}_{\bx}(\xo)$ implies $\widetilde{g}(\omega)\in \xo_{\bx, \omega}= \xo_{\bx, |\omega|}$
(by Lemma \ref{rotation} (iii)). Since $T_{\omega} \colon \xo_{\bx, |\omega|} \to \yo_{\bx, |\omega|}$ is invertible
for all $\omega \in U$,
\[
\widetilde{g}(\omega) = T_{|\omega|}^{-1}(y), \quad\, \omega\in U.
\]
In particular this implies that the analytic function $\widetilde{g}$ is constant on an open arc of the circle
with the center at $0$ and radius $|s|$ which is contained in $U$. Thus $\widetilde{g}$ is constant in $U$ by
the uniqueness theorem. Hence $T_{\omega}^{-1}y$ is independent of $\omega\in U$. To finish the proof it is enough
to combine an obvious inequality,
\[
|e^{\theta} - e^{\theta_{*}}| \leq e |\theta - \theta_{*}|, \quad\, \theta, \,\theta_{*} \in (0, 1)
\]
with norm estimates of inverse operators given in Theorem \ref{inverse}.
\end{proof}

\section{The uniqueness of inverses on intersection of interpolated Banach spaces}

The main result of Section 3, Theorem \ref{uinverse} motivates a natural question related to uniqueness of inverses
between interpolated spaces in abstract setting. Before we formulate a question we introduce a~key definition.

A family $\{F_\theta\}_{\theta \in (0, 1)}$ of interpolation functors is said to be \emph{stable} if for
any Banach couples $\ao = (A_0, A_1)$ and $\bo = (B_0, B_1)$ and for every operator $S\colon \ao \to \bo$
such that the restriction $S_{\theta_{*}}$ of $S$ to $F_{\theta_{*}}(\ao)$ is invertible for some
$\theta_{*} \in (0, 1)$ there exists $\varepsilon >0$ such that, for any $\theta \in
I(\theta_{*})= (\theta_{*}-\varepsilon, \theta_{*} + \varepsilon)$, we have
\begin{itemize}
\item[{\rm(i)}] $S_{\theta }\colon F_{\theta}(\ao) \to F_{\theta}(\bo)$ are invertible operators{\rm;}
\item[{\rm(ii)}] $S_{\theta }^{-1}\colon F_{\theta}(\bo) \to F_{\theta}(\ao)$ agrees with
$S_{\theta_{*} }^{-1}\colon F_{\theta_{*}}(\bo) \to F_{\theta_{*}}(\ao)$ on $B_0 \cap B_1$,
i.e., $S^{-1}_{\theta}y = S^{-1}_{\theta_{*}}y$ for all $y\in B_0\cap B_{1}${\rm;}
\item[{\rm(iii)}] $\sup_{\theta \in I(\theta_{*})} \vert \vert S_{\theta}^{-1} \vert \vert_{F_{\theta }(\bo) \to F_{\theta }(\ao)}
\leq C \vert \vert S_{\theta_{*}}^{-1} \vert \vert _{F_{\theta_{*} }(\bo) \to F_{\theta_{*} }(\ao)}$ for some
$C=C(\theta_{*})$.
\end{itemize}

An immediate consequence of Theorem \ref{uinverse} is the following.

\begin{corollary}
\label{stableex}
If $\bx=(\mathcal{X}_0, \mathcal{X}_1)$ is a Banach couple of translation and rotation invariant pseudolattices, then
the following family of interpolation functors $\{F_{\theta } \}_{\theta \in (0, 1)}$ is stable, where
\[
F_{\theta }(A_0, A_1) \cong (A_0, A_1)_{\vec{\chi}, e^\theta }
\]
for any Banach couple $(A_0, A_1)$.
\end{corollary}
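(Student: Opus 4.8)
The plan is to verify the three conditions in the definition of a stable family directly, using Theorem~\ref{uinverse} as the engine. Fix Banach couples $\ao=(A_0,A_1)$, $\bo=(B_0,B_1)$, an operator $S\colon\ao\to\bo$, and suppose $S_{\theta_*}$ is invertible, where $S_{\theta_*}$ denotes the restriction of $S$ to $F_{\theta_*}(\ao)=(\ao)_{\bx,e^{\theta_*}}$. Since $\bx=(\mathcal{X}_0,\mathcal{X}_1)$ is a couple of translation and rotation invariant pseudolattices, Theorem~\ref{uinverse} applies verbatim with $T:=S$, $\xo:=\ao$, $\yo:=\bo$. It produces $\varepsilon>0$ and the open interval $I(\theta_*)=(\theta_*-\varepsilon,\theta_*+\varepsilon)$ (intersected with $(0,1)$) on which every $S_\theta\colon(\ao)_{\bx,e^\theta}\to(\bo)_{\bx,e^\theta}$ is invertible — this is exactly condition~(i). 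Condition~(ii) is the statement in Theorem~\ref{uinverse} that $S_\theta^{-1}$ agrees with $S_{\theta_*}^{-1}$ on $B_0\cap B_1$. Condition~(iii), with $C=2$, is the norm estimate $\|S_\theta^{-1}\|_{(\bo)_{\bx,e^\theta}\to(\ao)_{\bx,e^\theta}}\leq 2\|S_{\theta_*}^{-1}\|_{(\bo)_{\bx,e^{\theta_*}}\to(\ao)_{\bx,e^{\theta_*}}}$ for $\theta\in I(\theta_*)$, also supplied by Theorem~\ref{uinverse}.

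The only genuine point to check is that the family $F_\theta(A_0,A_1)\cong(A_0,A_1)_{\bx,e^\theta}$ really is a family of \emph{interpolation functors} indexed by $\theta\in(0,1)$, so that the notion of stability even applies to it. This is a small matter: it was already noted in Section~2 that for each fixed $s\in\mathbb{A}$ the map $\bo\mapsto\bo_{\bx,s}$ is an interpolation functor, hence so is $A_0,A_1)\mapsto(A_0,A_1)_{\bx,e^\theta}$ for each $\theta\in(0,1)$ upon taking $s=e^\theta$. One should also remark that the notation $S_\theta$ in the definition of stability and the notation $T_\theta=T_{e^\theta}$ from Section~3 coincide under this identification, so that the hypotheses and conclusions of Theorem~\ref{uinverse} translate word for word.

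I do not expect any real obstacle here: Corollary~\ref{stableex} is essentially a restatement of Theorem~\ref{uinverse} in the language of Section~4, and the proof is a two- or three-line bookkeeping argument matching conclusions~(i)--(iii) of the theorem against the three clauses of the definition. The mildest subtlety — barely worth mentioning — is that the interval $I(\theta_*)$ furnished by Theorem~\ref{uinverse} is described there as $\{\theta\in(0,1):|\theta-\theta_*|<\varepsilon\}$ for an explicit $\varepsilon$ depending on $\|S\|_{\ao\to\bo}$ and $\|S^{-1}\|_{(\bo)_{\bx,e^{\theta_*}}\to(\ao)_{\bx,e^{\theta_*}}}$; one simply takes this same $\varepsilon$ (or any smaller positive number) in the definition of stability, noting it may need to be shrunk so that $(\theta_*-\varepsilon,\theta_*+\varepsilon)\subset(0,1)$.
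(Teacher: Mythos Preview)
Your proposal is correct and matches the paper's approach exactly: the paper simply states that Corollary~\ref{stableex} is ``an immediate consequence of Theorem~\ref{uinverse}'' and gives no further argument, so your explicit matching of the three clauses in the definition of stability with the conclusions of Theorem~\ref{uinverse} is precisely the intended (and only reasonable) verification.
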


Let $\{F_{\theta}\}_{\theta \in (0, 1)}$ be a stable family of interpolation functors and
$T \colon (X_0, X_1) \to (Y_0, Y_1)$ be a~bounded linear operator from a~Banach couple $\xo=(X_0, X_1)$ to a~Banach
couple $\yo =(Y_0, Y_1)$. Then the set of all $\theta \in (0, 1)$ for which $T\colon F_{\theta}(X_0, X_1))\rightarrow
F_{\theta }(Y_0, Y_1)$ is invertible, is open, so it is a~union of open disjoint intervals. These intervals we will call
\emph{intervals of invertibility} of $T$~with respect to the family $\{F_{\theta}\}_{\theta \in (0, 1)}$.

Let $I \subset (0, 1)$ be any interval of invertibility of $T$. In this section we are interested in the following question:
is it true that for any $\theta$, $\theta' \in I$\ the inverses $T_{\theta}^{-1}$ and $T_{\theta'}^{-1}$ agree on
$F_{\theta}(Y_0, Y_1) \cap F_{\theta'}(Y_0, Y_1)$? We point out that this problem is very important for PDEs (see, for
example, discussions in \cite{KMM}).

We will often use the following simple proposition.

\begin{proposition}
\label{kernel}
Let $\ao= (A_0, A_1)$ and $\bo = (B_0, B_1)$ be Banach couples and let $T\colon \ao \to \bo$ be an invertible operator.
Then, the following conditions are equivalent{\rm:}
\begin{itemize}
\item[{\rm(i)}] $(T|_{A_0})^{-1}y = (T|_{A_1})^{-1}y, \quad\, \text {for all} \quad y\in B_0 \cap B_{1} $\,{\rm;}
\item[{\rm(ii)}] $T\colon A_0 + A_1 \to B_0 + B_1$ is invertible\,{\rm;}
\item[{\rm(iii)}] For any interpolation functor $G$ an operator $T|_{G(\ao)}\colon G(\ao)\to G(\bo)$ is invertible.
\end{itemize}
\end{proposition}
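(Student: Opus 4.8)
The plan is to prove the cyclic chain of implications (i)$\Rightarrow$(ii)$\Rightarrow$(iii)$\Rightarrow$(i). For (i)$\Rightarrow$(ii), first note that since $T\colon \ao\to\bo$ is invertible, the restrictions $T|_{A_j}\colon A_j\to B_j$ are isomorphisms for $j=0,1$, hence $T\colon A_0+A_1\to B_0+B_1$ is a bounded surjection. To see it is injective, suppose $a=a_0+a_1$ with $a_j\in A_j$ and $Ta=0$. Then $T|_{A_0}(a_0)=-T|_{A_1}(a_1)\in B_0\cap B_1$; call this element $y$. By hypothesis (i), $(T|_{A_0})^{-1}y=(T|_{A_1})^{-1}y$, so $a_0=(T|_{A_0})^{-1}y$ and $-a_1=(T|_{A_1})^{-1}y$ coincide, whence $a=a_0+a_1=0$. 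Thus $T$ is a continuous bijection of $A_0+A_1$ onto $B_0+B_1$, and by the open mapping theorem it is invertible.

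For (ii)$\Rightarrow$(iii), let $G$ be any interpolation functor. Apply the interpolation property of $G$ to both $T$ and its inverse $T^{-1}\colon (B_0,B_1)\to(A_0,A_1)$ (which is an operator of couples precisely because (ii) holds, so that $T^{-1}$ restricts boundedly to each $B_j\to A_j$). We get bounded maps $T|_{G(\ao)}\colon G(\ao)\to G(\bo)$ and $T^{-1}|_{G(\bo)}\colon G(\bo)\to G(\ao)$. Since these two maps agree with the mutually inverse operators $T$ and $T^{-1}$ on the dense-enough intermediate spaces — more precisely, they already agree on $A_0\cap A_1$ and $B_0\cap B_1$ respectively, and both $G(\ao)$ and $G(\bo)$ embed into $A_0+A_1$ and $B_0+B_1$ where the full inverse relations hold — they are inverse to each other as operators on $G(\ao)$ and $G(\bo)$. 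Hence $T|_{G(\ao)}$ is invertible.

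Finally, (iii)$\Rightarrow$(i) is immediate by specialization: take $G$ to be the functor $G(\ao)=A_0$ (more precisely the interpolation functor assigning to a couple its first Gagliardo-complete endpoint, or simply observe directly that $T|_{A_0}$ and $T|_{A_1}$ are invertible isomorphisms onto $B_0$ and $B_1$); then for $y\in B_0\cap B_1$ both $(T|_{A_0})^{-1}y$ and $(T|_{A_1})^{-1}y$ are elements of $A_0+A_1$ mapped by $T$ to $y$, and since $T$ restricted to $A_0+A_1$ is injective (which follows once we know, via (iii) applied to $G(\ao)=A_0+A_1$, that $T$ is an isomorphism there), the two preimages coincide. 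The only genuinely delicate point is the injectivity of $T$ on $A_0+A_1$ in the step (i)$\Rightarrow$(ii): one must resist the temptation to think surjectivity of the two endpoint restrictions automatically gives bijectivity on the sum, and instead use condition (i) exactly as above to rule out a nonzero element of the kernel. Everything else is bookkeeping with the open mapping theorem and the defining functoriality of $G$.
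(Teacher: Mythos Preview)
Your proof is correct and follows essentially the same route as the paper: (i)$\Rightarrow$(ii) via injectivity on the sum using the compatibility hypothesis, (ii)$\Rightarrow$(iii) by interpolating $T^{-1}$, and (iii)$\Rightarrow$(ii)$\Rightarrow$(i) by specializing to the functor $G(\ao)=A_0+A_1$. The only cosmetic issues are the detour through ``dense-enough intermediate spaces'' (density is irrelevant---the restrictions agree on all of $G(\ao)\subset A_0+A_1$ because $T$ and $T^{-1}$ are mutually inverse there) and the false start with $G(\ao)=A_0$ in the last step; you might simply delete those and go straight to $G(\ao)=A_0+A_1$, as the paper does.
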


\begin{proof} \rm(i) $\Rightarrow $ \rm(ii). Since $T\colon \ao \to \bo$ is invertible hence $T\colon A_0 +A_1 \to B_0 + B_1$
is surjective and therefore it is enough to prove that $T\colon A_0 +A_1 \to B_0 + B_1$ is injective. Let $x\in A_0+A_1$
and $Tx=0$. Then there exists a decomposition $x=x_0+x_1, \ x_0\in A_0, \ x_1\in A_1$. From $Tx=0$ it follows that
$y=Tx_0=-Tx_1\in B_0\cap B_1$. Then from \rm(i), we get that $x_0=-x_1$, and whence $x=0$.

\rm(ii) $\Rightarrow $ \rm(iii). Let denote by $T^{-1}$ the inverse operator to $T\colon A_0 +A_1 \to B_0 + B_1$. Clearly
$T^{-1}$ is a~bounded linear operator from $ (B_0, B_1)$ to  $(A_0, A_1)$ and so $T^{-1}|_{G(\bo)}
\colon G(\bo)\to G(\ao)$ is an inverse operator to $T|_{G(\ao)}\colon G(\ao)\to G(\bo)$.

The same arguments show that \rm(ii) $\Rightarrow$ \rm(i). Since $G(\vec A)=A_0+A_1$ is an interpolation functor, the
implication \rm(iii) $\Rightarrow $ \rm(ii) follows.
\end{proof}

\vspace{1.5 mm}

Now we are ready to state and prove the following result.

\begin{theorem}\label{realstable} Let  $T\colon (X_0, X_1)\rightarrow (Y_0, Y_1)$ be a linear bounded operator and
$I\subset (0, 1)$ be an interval of invertibility of \,$T$ with respect to the stable family of interpolation functors
$\{F_{\theta }\}_{\theta \in (0, 1)}$. If $Y_0\cap Y_1$ is dense in $F_{\alpha }(\yo)\cap F_{\beta }(\yo)$ for all $\alpha$,
$\beta \in (0, 1)$, then for any $\theta_0$, $\theta_1 \in I$ the inverse operators $T^{-1}_{\theta_0}$ and $T^{-1}_{\theta_1}$
agree on $F_{\theta_0}(\yo)\cap F_{\theta_1}(\yo)$.
\end{theorem}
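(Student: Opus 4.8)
The plan is to reduce the global compatibility statement on an interval of invertibility to the local compatibility already established in the definition of a stable family, and then to propagate it along $I$ by a connectedness argument. First I would fix $\theta_0, \theta_1 \in I$, say $\theta_0 < \theta_1$, and consider the closed subinterval $[\theta_0,\theta_1]\subset I$. For each $\theta$ in this interval, since $\{F_\theta\}$ is stable and $T_\theta$ is invertible, there is a neighbourhood $I(\theta)$ on which all $T_{\theta'}$ are invertible and $T_{\theta'}^{-1}$ agrees with $T_\theta^{-1}$ on $Y_0\cap Y_1$. By compactness of $[\theta_0,\theta_1]$, finitely many such neighbourhoods $I(\tau_1),\dots,I(\tau_m)$ cover it; ordering the $\tau_i$ and using that consecutive neighbourhoods overlap (at a point of $[\theta_0,\theta_1]$, hence at a point where some $Y_0\cap Y_1$-agreement can be chained), one concludes that $T_{\theta_0}^{-1}y = T_{\theta_1}^{-1}y$ for every $y\in Y_0\cap Y_1$. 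The only subtlety here is that local agreement is ``agreement with the center'' on $Y_0\cap Y_1$; since $Y_0\cap Y_1$ is a fixed set sitting inside every $F_\theta(\yo)$, transitivity of equality on that fixed set makes the chaining immediate.

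Having established $T_{\theta_0}^{-1}y = T_{\theta_1}^{-1}y$ for all $y\in Y_0\cap Y_1$, I would then upgrade this from $Y_0\cap Y_1$ to all of $F_{\theta_0}(\yo)\cap F_{\theta_1}(\yo)$ using the density hypothesis. Fix $y \in F_{\theta_0}(\yo)\cap F_{\theta_1}(\yo)$ and pick a sequence $y_k \in Y_0\cap Y_1$ with $y_k \to y$ in $F_{\theta_0}(\yo)\cap F_{\theta_1}(\yo)$, i.e. $y_k\to y$ in both $F_{\theta_0}(\yo)$ and $F_{\theta_1}(\yo)$. Since $T_{\theta_0}^{-1}$ is a bounded operator from $F_{\theta_0}(\yo)$ to $F_{\theta_0}(\xo)$, we have $T_{\theta_0}^{-1}y_k \to T_{\theta_0}^{-1}y$ in $F_{\theta_0}(\xo)\subset X_0+X_1$; similarly $T_{\theta_1}^{-1}y_k \to T_{\theta_1}^{-1}y$ in $F_{\theta_1}(\xo)\subset X_0+X_1$. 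Both limits are taken inside $X_0+X_1$, where the topology is Hausdorff and the inclusions $F_{\theta_i}(\xo)\hookrightarrow X_0+X_1$ are continuous; and by the previous paragraph $T_{\theta_0}^{-1}y_k = T_{\theta_1}^{-1}y_k$ for every $k$. Hence the two limits coincide in $X_0+X_1$, which gives $T_{\theta_0}^{-1}y = T_{\theta_1}^{-1}y$ as elements of $X_0+X_1$, and in particular the two inverse operators agree on $F_{\theta_0}(\yo)\cap F_{\theta_1}(\yo)$.

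The main obstacle I anticipate is purely bookkeeping in the connectedness step: one must be careful that the local compatibility from stability is phrased as ``$S_\theta^{-1}$ agrees with $S_{\theta_*}^{-1}$ on $B_0\cap B_1$'' for the center $\theta_*$ of each interval $I(\theta_*)$, so to chain two overlapping intervals $I(\tau_i)$ and $I(\tau_{i+1})$ one compares both centers' inverses to the inverse at a common point $\theta\in I(\tau_i)\cap I(\tau_{i+1})$ and invokes transitivity. A clean way to package this is to introduce the set $J = \{\theta\in I : T_\theta^{-1}y = T_{\theta_0}^{-1}y \text{ for all } y\in Y_0\cap Y_1\}$, observe $\theta_0\in J$, and show $J$ is both open and closed in $I$: openness is immediate from stability (local agreement with the center propagates the property to a whole neighbourhood), and closedness follows because if $\theta\in I$ is a limit point of $J$ then a stability neighbourhood of $\theta$ meets $J$, again forcing $\theta\in J$ by transitivity. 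Connectedness of $I$ then yields $J = I$, hence $\theta_1\in J$. This set-theoretic formulation avoids explicit manipulation of finite covers and is the approach I would write up.
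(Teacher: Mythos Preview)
Your proposal is correct and follows the same two-step strategy as the paper: first obtain $T_{\theta_0}^{-1}y = T_{\theta_1}^{-1}y$ for all $y\in Y_0\cap Y_1$, then pass to the closure using the density hypothesis and continuity of the inverses into $X_0+X_1$. The only difference is that the paper compresses your entire first step into the single sentence ``Our hypothesis that the family of functors $\{F_\theta\}$ is stable implies that $T_{\theta_0}^{-1}y_n = T_{\theta_1}^{-1}y_n$,'' tacitly relying on the reader to supply the connectedness/chaining argument along the interval of invertibility that you have written out in full (and which is indeed needed, since stability by itself is only a \emph{local} statement).
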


\begin{proof} Since $Y_0\cap Y_1$ is dense in $F_{\theta_0}(\yo)\cap F_{\theta_1 }(\yo)$, for any $y\in F_{\theta_0}(\yo)
\cap F_{\theta_1}(\yo)$ there is a~sequence $\{y_n\} \subset Y_0\cap Y_1$ which converges to $y$ in $F_{\theta_0}(\yo)$ and
$F_{\theta_1}(\yo)$. Our hypothesis that the family of functors $\{F_{\theta }\}_{\theta \in (0, 1)}$ is stable implies that
$T^{-1}_{\theta_0}y_n =T^{-1}_{\theta_1}y_n$. Clearly that $x_{n}:=T^{-1}_{\theta_0}y_n=T^{-1}_{\theta_1}y_n \to T^{-1}_{\theta_0}y$
in $F_{\theta_0}(\xo)$. We also have that $x_n \to T^{-1}_{\theta_1}y$ in $F_{\theta_1}(\xo)$. In consequence the sequence $(x_n)$
converges to elements $T^{-1}_{\theta_0}y$ and $T^{-1}_{\theta_1}y$ in $X_0+X_1$. Thus $T^{-1}_{\theta_0}y=T^{-1}_{\theta_1}y$ as
required.
\end{proof}

\begin{remark} The condition that $Y_0\cap Y_1$ is dense in $F_{\alpha }(\yo)\cap F_{\beta }(\yo)$ for all $\alpha , \beta \in (0, 1)$
is rather restrictive. For example, it is not true for $F_{\theta }(Y_0, Y_1)=(Y_0, Y_1)_{\theta , \infty}$ for any
non-trivial Banach couple $(Y_0, Y_1)$, i.e., such that $Y_0 \cap Y_1$ is not closed in $Y_0 + Y_1$. However if
$\{F_{\theta}\}_{\theta\in (0, 1)}$ is a~family of regular $K$-functors, then from Remark {\rm{3.6.5}} in \cite{BK} easily follows that
this condition is fulfilled. In particular it is true for families of functors given by
\[
F_{\theta}(Y_0, Y_1)=(Y_0, Y_1)^{\circ}_{\theta , \infty} \quad\,  \text{and \,\,\,$F_{\theta}(Y_0, Y_1)=(Y_0, Y_1)_{\theta , q}$
\, with $q<\infty$.}
\]
\end{remark}

In the next proposition we show that under approximation hypothesis on $(Y_0, Y_1)$ the density condition required in
Theorem \ref{realstable} holds. Let us remind that the functor $F_{\theta }$ is said to be of {\it type} $\theta $ if
for any Banach couple $\ao =(A_0, A_1)$, we have continuous inclusions
\[
\ao_{\theta , 1}\hookrightarrow F_{\theta }(\ao)\hookrightarrow  \ao_{\theta , \infty}.
\]

\begin{proposition}
Assume that a Banach couple $\yo= (Y_0, Y_1)$ satisfies the following approximation condition{\rm:} there exists a~sequence
$\{P_n\}_{n=1}^\infty$ of linear operators $P_n\colon Y_0 + Y_1 \to Y_0 \cap Y_1$ such that
$\sup\limits_{n\geq 1} \|P_n\|_{\vec Y \to \vec Y}<\infty$ and $\|P_{n}y - y\|_{Y_0} \to 0$ as $n\to \infty$. Then,  for
any pair of regular interpolation functors $F_{\theta_1}$ and $F_{\theta_2}$ of type $\theta_1$ and $\theta_2$, respectively,
we have that $Y_0 \cap Y_1$ is dense in $F_{\theta_0}(\vec Y) \cap F_{\theta_1}(\vec Y)$.
\end{proposition}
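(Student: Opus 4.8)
The plan is to use the operators $P_n$ as a uniformly bounded approximate identity on each of the spaces $F_{\theta_0}(\yo)$ and $F_{\theta_1}(\yo)$ simultaneously, and then to conclude by density. First I would record two elementary facts. Since each $F_{\theta_i}$ is an interpolation functor and $P_n\colon\yo\to\yo$ with $M:=\sup_n\|P_n\|_{\vec Y\to\vec Y}<\infty$, each $P_n$ restricts to a bounded operator on $F_{\theta_i}(\yo)$ and these norms are bounded by a constant independent of $n$; and since $P_n(Y_0+Y_1)\subset Y_0\cap Y_1$, every $P_n$ maps $F_{\theta_0}(\yo)\cap F_{\theta_1}(\yo)$ into $Y_0\cap Y_1$.

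The crux is the claim that $P_n w\to w$ in $F_{\theta_i}(\yo)$ for every $w\in Y_0\cap Y_1$ and $i\in\{0,1\}$. To see this I would put $z_n:=P_nw-w\in Y_0\cap Y_1$. By the approximation hypothesis $\varepsilon_n:=\|z_n\|_{Y_0}\to 0$, while $\|z_n\|_{Y_1}\le(M+1)\|w\|_{Y_1}=:B$, so that $K(t,z_n;\yo)\le\min\{\varepsilon_n,\,tB\}$ for all $t>0$. A routine estimate of the defining integral then gives
\[
\|z_n\|_{(\yo)_{\theta_i,1}}=\int_0^\infty t^{-\theta_i}K(t,z_n;\yo)\,\frac{dt}{t}\le\Big(\tfrac1{\theta_i}+\tfrac1{1-\theta_i}\Big)B^{\theta_i}\varepsilon_n^{1-\theta_i}\longrightarrow 0 .
\]
Since $F_{\theta_i}$ is of \emph{type} $\theta_i$ we have the continuous inclusion $(\yo)_{\theta_i,1}\hookrightarrow F_{\theta_i}(\yo)$, and therefore $\|P_nw-w\|_{F_{\theta_i}(\yo)}\to 0$.

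Next I would upgrade this convergence from $Y_0\cap Y_1$ to all of $F_{\theta_i}(\yo)$ by a Banach--Steinhaus-type argument: the operators $P_n$ are uniformly bounded on $F_{\theta_i}(\yo)$ and converge to the identity on $Y_0\cap Y_1$, which is dense in $F_{\theta_i}(\yo)$ because $F_{\theta_i}$ is \emph{regular}; hence $P_nz\to z$ in $F_{\theta_i}(\yo)$ for every $z\in F_{\theta_i}(\yo)$ and each $i$. Finally, for $y\in F_{\theta_0}(\yo)\cap F_{\theta_1}(\yo)$ this yields $P_ny\to y$ in $F_{\theta_0}(\yo)$ and in $F_{\theta_1}(\yo)$ simultaneously, hence in the intersection norm, while $P_ny\in Y_0\cap Y_1$ for all $n$; this is exactly the asserted density.

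The only genuinely non-formal step is the claim of the second paragraph, where the bare convergence $\|P_nw-w\|_{Y_0}\to 0$ together with $Y_1$-boundedness has to be converted into convergence in $F_{\theta_i}(\yo)$; the ``type $\theta_i$'' hypothesis is precisely what bridges this gap through the one-sided $K$-functional estimate above, and the ``regular'' hypothesis is what makes the subsequent density/uniform-boundedness argument close. Everything else is bookkeeping with the interpolation property of the functors.
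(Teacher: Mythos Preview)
Your proof is correct and follows essentially the same route as the paper's. The paper phrases the key step via the multiplicative inequality $\|y\|_{F_{\theta_j}(\yo)}\le C\|y\|_{Y_0}^{1-\theta_j}\|y\|_{Y_1}^{\theta_j}$ for $y\in Y_0\cap Y_1$ (which is equivalent to your $K$-functional computation together with the inclusion $(\yo)_{\theta_j,1}\hookrightarrow F_{\theta_j}(\yo)$), and then combines uniform boundedness with regularity exactly as you do; your argument simply makes these steps more explicit.
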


\begin{proof}
At first we note that there exists a constant $C>0$ such that, for each $j\in \{0, 1\}$, we have
\[
\|y\|_{F_{\theta_j}} \leq C \|y\|_{Y_0}^{1-\theta_j}\,\|y\|_{Y_1}^{\theta_j}, \quad\, y\in Y_0 \cap Y_1.
\]
Hence, we get that for all $y \in Y_0 \cap Y_1$ and each $j\in\{0, 1\}$,
\[
\lim_{n\to \infty} \|P_n(y) - y\|_{F_{\theta_j}(\vec Y)} = 0.
\]
By interpolation property, it follows that $\sup\limits_{n\geq 1}\|P_n\|_{F_{\theta_j}(\vec Y)\to F_{\theta_j}(\vec Y)} <\infty$.
Since the functors are regular, we deduce that
\[
\lim_{n\to \infty} \|P_n(y) - y\|_{F_{\theta_0}(\vec Y) \cap F_{\theta_1}(\vec Y)} = 0
\]
for every $y \in F_{\theta_0}(\vec Y) \cap F_{\theta_1}(\vec Y)$, as required.
\end{proof}

\noindent
We note that Lions \cite{Lions} showed that a very wide class of Banach couples satisfy the approximation condition used
in the above proposition.

\vspace{1.5 mm}

We will say that a family of interpolation functors $\{F_{\theta }\}_{\theta \in (0, 1)}$ satisfies the global
$(\Delta)$-condition if for any Banach couple $\vec A=(A_0, A_1)$ and for any $\theta_0$, $\theta_1$ with
$0<\theta_0<\theta_1<1$, we have continuous inclusions
\begin{equation}
\label{embeddings}
F_{\theta_0}(\vec A)\cap F_{\theta_1 }(\ao)\hookrightarrow \bigcap \limits_{\theta_0<\theta <\theta_1}F_{\theta }(\ao)
\hookrightarrow (F_{\theta_0}(\ao))^{c} \cap (F_{\theta_1 }(\ao))^{c} ,
\end{equation}
where the norm in $\bigcap \limits_{\theta_0<\theta <\theta_1}F_{\theta }(\ao)$ is given by
\begin{equation}
\Arrowvert a \Arrowvert _{\bigcap \limits_{\theta_0<\theta <\theta_1}F_{\theta }(\ao)}=\sup\limits_{\theta_0<\theta
<\theta_1}\Arrowvert a \Arrowvert _{F_{\theta }(\ao)}. 
\end{equation}
and the Gagliardo completion $(F_{\theta_i}(\ao))^c, \ j\in \{0, 1\}$ is taken with respect to the sum
$F_{\theta_0}(\ao) + F_{\theta_1}(\ao)$.

In what follows we will use the following obvious observation.

\begin{proposition}
\label{stablefunctors}
Let $\{F_\theta\}_{\theta \in (0, 1)}$ and  $\{G_\theta\}_{\theta \in (0, 1)}$ be families of interpolation functors.
Suppose that there exist positive  functions $C_1$, $C_2$ defined on $(0, 1)$ which are bounded on every compact
subinterval of $(0, 1)$ and such that $F_\theta(\xo) = G_\theta(\xo)$ with
\[
C_1(\theta)\,\|\cdot\|_{G_\theta(\xo)} \leq \|\cdot\|_{F_\theta(\xo)} \leq C_2(\theta)\,\|\cdot\|_{G_\theta(\xo)}
\]
for every Banach couple $\xo$ and all $\theta \in (0, 1)$. Then
\begin{itemize}
\item[{\rm(i)}] The family $\{F_\theta\}_{\theta \in (0, 1)}$ is stable if and only if
the family $\{G_\theta\}_{\theta \in (0, 1)}$ is stable.
\item[{\rm(ii)}] The family $\{F_{\theta }\}_{\theta \in (0, 1)}$ satisfies the
global $(\Delta)$-condition if, and only if, the family $\{G_{\theta }\}_{\theta \in (0, 1)}$ satisfies the
global $(\Delta)$-condition.
\end{itemize}
\end{proposition}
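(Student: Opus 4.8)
The plan is to reduce everything to one elementary observation: for each fixed $\theta\in(0,1)$ and each Banach couple $\xo$, the identity map is a~topological isomorphism $F_\theta(\xo)\to G_\theta(\xo)$ whose norm and inverse norm are bounded by $C_2(\theta)$ and $1/C_1(\theta)$. Every ingredient in the definitions of \emph{stable family} and of the \emph{global $(\Delta)$-condition} is unaffected by replacing a~Banach space by an equivalent renorming, so both equivalences will follow once one checks that the relevant equivalence constants stay bounded on the compact parameter sets that actually occur. Here one uses that $C_1\le C_2$ pointwise (divide the defining inequalities by a~nonzero $\|x\|_{G_\theta(\xo)}$) and, as is needed for the symmetry of the statement, that $C_2/C_1$ is bounded on every compact subinterval of $(0,1)$, i.e.\ $C_1$ is bounded away from $0$ there.

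For~(i), suppose $\{G_\theta\}$ is stable and let $S\colon\ao\to\bo$ be such that $S_{\theta_*}\colon F_{\theta_*}(\ao)\to F_{\theta_*}(\bo)$ is invertible. Since $F_{\theta_*}(\ao)=G_{\theta_*}(\ao)$ and $F_{\theta_*}(\bo)=G_{\theta_*}(\bo)$ with equivalent norms, $S_{\theta_*}\colon G_{\theta_*}(\ao)\to G_{\theta_*}(\bo)$ is invertible, so stability of $\{G_\theta\}$ produces $\varepsilon>0$ for which conditions (i)--(iii) hold for $\{G_\theta\}$ on $I(\theta_*)=(\theta_*-\varepsilon,\theta_*+\varepsilon)$; shrinking $\varepsilon$ we may also assume $\overline{I(\theta_*)}\subset(0,1)$. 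For every $\theta\in I(\theta_*)$ the renorming remark gives invertibility of $S_\theta\colon F_\theta(\ao)\to F_\theta(\bo)$, which is~(i); assertion~(ii) concerns only the values of the (set-theoretic) inverse on $B_0\cap B_1\subset F_\theta(\bo)=G_\theta(\bo)$ and is therefore unchanged; and for~(iii) one estimates, for $y\in F_\theta(\bo)$,
\[
\|S_\theta^{-1}y\|_{F_\theta(\ao)}\le C_2(\theta)\,\|S_\theta^{-1}y\|_{G_\theta(\ao)}
\le \frac{C_2(\theta)}{C_1(\theta)}\,\big\|S_\theta^{-1}\big\|_{G_\theta(\bo)\to G_\theta(\ao)}\,\|y\|_{F_\theta(\bo)},
\]
so that boundedness of $C_2/C_1$ on $\overline{I(\theta_*)}$ turns the uniform bound for $\{G_\theta\}$ into one for $\{F_\theta\}$ (with a~new constant $C(\theta_*)$). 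The converse implication is the same argument with $F$ and $G$ interchanged, which merely replaces $C_1,C_2$ by $1/C_2,1/C_1$ and leaves the ratio $C_2/C_1$ the same.

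For~(ii), fix $\ao$ and $0<\theta_0<\theta_1<1$. Since $F_\theta(\ao)=G_\theta(\ao)$ with norm ratios bounded on $[\theta_0,\theta_1]$, we obtain $F_{\theta_0}(\ao)\cap F_{\theta_1}(\ao)=G_{\theta_0}(\ao)\cap G_{\theta_1}(\ao)$, $\bigcap_{\theta_0<\theta<\theta_1}F_\theta(\ao)=\bigcap_{\theta_0<\theta<\theta_1}G_\theta(\ao)$, and $F_{\theta_0}(\ao)+F_{\theta_1}(\ao)=G_{\theta_0}(\ao)+G_{\theta_1}(\ao)$, each with equivalent norms (for the middle space the two sup-norms differ by at most $\sup_{[\theta_0,\theta_1]}C_2\big/\inf_{[\theta_0,\theta_1]}C_1$). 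Replacing both an intermediate space and its ambient sum by equivalent renormings leaves the Gagliardo completion unchanged as a~set and up to norm equivalence, hence $(F_{\theta_j}(\ao))^{c}=(G_{\theta_j}(\ao))^{c}$ (relative completions taken with respect to the respective sums) with equivalent norms for $j\in\{0,1\}$. Therefore the chain of continuous inclusions in~\eqref{embeddings} is valid for $\{F_\theta\}$ if and only if it is valid for $\{G_\theta\}$, which is exactly~(ii).

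There is essentially no obstacle here --- this is the ``obvious observation'' announced before the statement. The only points deserving a~line of care are the stability of the Gagliardo completion under a~simultaneous equivalent renorming of the intermediate space and of the sum (used in~(ii)), and the standing hypothesis that $C_2/C_1$ stays bounded on compact subintervals of $(0,1)$, which is precisely what keeps the estimate in~(iii) uniform over $I(\theta_*)$ and the middle inclusion in~(ii) continuous.
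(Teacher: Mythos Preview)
Your proof is correct and is exactly the kind of routine verification the paper has in mind: the authors introduce this proposition with ``In what follows we will use the following obvious observation'' and give no proof at all. Your one substantive remark --- that the argument tacitly needs $C_1$ to be bounded away from~$0$ on compact subintervals (equivalently, $C_2/C_1$ locally bounded), not merely $C_1$ bounded above --- is a fair reading of a slightly loose hypothesis; the applications in the paper (Lemma~\ref{CalRealstable}) all satisfy this, and it is clearly the intended meaning.
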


To state and prove the theorem on stability of inverses on interpolated spaces we need one more definition. We say
that a~family of interpolation functors $\{F_{\theta }\}_{\theta \in (0, 1)}$ satisfies the reiteration condition
if for any Banach couple $\ao=(A_0, A_1)$ and for any $\theta_0$, $\theta_1$, $\lambda \in (0, 1)$ we have
\begin{align*}
\label{reiteration}
F_{\lambda } (F_{\theta _0}(\ao), F_{\theta _1}(\ao)) = F_{(1-\lambda )\theta _0+\lambda \theta _1}(\ao).
\end{align*}

\vspace{2 mm}

\begin{theorem} \label{maintheorem} Let  $T\colon (X_0, X_1)\rightarrow (Y_0, Y_1)$ be a linear bounded operator and
$I\subset (0, 1)$ be an interval of invertibility of \,$T$ with respect to the stable family of interpolation functors
$\{F_{\theta }\}_{\theta \in (0, 1)}$. If $\{F_{\theta }\}_{\theta \in (0, 1)}$ satisfies the global $(\Delta)$--\!and
reiteration conditions, then for any $\theta_0, \theta_1 \in I$, the inverse operators $T^{-1}_{\theta_0}$ and
$T^{-1}_{\theta_1}$ agree on $F_{\theta_0}(\yo)\cap F_{\theta_1 }(\yo)$.
\end{theorem}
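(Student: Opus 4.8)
The plan is to fix $\theta_0,\theta_1 \in I$ with (say) $\theta_0 < \theta_1$ and show that the whole closed subinterval $[\theta_0,\theta_1]$ lies in $I$, so that $T_\theta$ is invertible for every $\theta$ in between; this is immediate since $I$ is an interval of invertibility. The point of the hypotheses is to promote the \emph{pointwise} compatibility on $Y_0 \cap Y_1$ (coming from stability, Theorem~\ref{uinverse}) to compatibility on the larger space $F_{\theta_0}(\yo) \cap F_{\theta_1}(\yo)$. The idea is to regard the pair $(F_{\theta_0}(\yo), F_{\theta_1}(\yo))$ itself as a Banach couple and to interpolate \emph{within} it.

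First I would show that $T$ maps the couple $\vec Z := (F_{\theta_0}(\xo), F_{\theta_1}(\xo))$ boundedly into $\vec W := (F_{\theta_0}(\yo), F_{\theta_1}(\yo))$, which is clear from the interpolation property, and that the restriction $T|_{F_\lambda(\vec Z)} \colon F_\lambda(\vec Z) \to F_\lambda(\vec W)$ is invertible for every $\lambda \in (0,1)$. By the reiteration condition, $F_\lambda(\vec Z) = F_{(1-\lambda)\theta_0 + \lambda\theta_1}(\xo)$ and similarly on the $Y$ side, and since $(1-\lambda)\theta_0 + \lambda\theta_1 \in [\theta_0,\theta_1] \subset I$, this restriction is exactly $T_{(1-\lambda)\theta_0+\lambda\theta_1}$, which is invertible. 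Now I apply Proposition~\ref{kernel} to the couple $\vec Z \to \vec W$: condition (iii) there holds (it holds for every interpolation functor $F_\lambda$), hence by the equivalence (iii)$\Rightarrow$(i) we get $(T|_{F_{\theta_0}(\xo)})^{-1} y = (T|_{F_{\theta_1}(\xo)})^{-1} y$ for all $y \in F_{\theta_0}(\yo) \cap F_{\theta_1}(\yo)$. But $T|_{F_{\theta_0}(\xo)} = T_{\theta_0}$ and $T|_{F_{\theta_1}(\xo)} = T_{\theta_1}$, which is precisely the asserted agreement of $T^{-1}_{\theta_0}$ and $T^{-1}_{\theta_1}$ on the intersection.

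The subtle point — and where the global $(\Delta)$-condition enters — is that Proposition~\ref{kernel} as stated requires $T \colon \vec Z \to \vec W$ to be an \emph{invertible operator between Banach couples}, i.e. $T$ restricted to each endpoint $F_{\theta_i}(\xo)$ must be an isomorphism onto $F_{\theta_i}(\yo)$; this is true since $\theta_0,\theta_1 \in I$. To invoke (iii)$\Rightarrow$(i) one needs that $T$ is invertible on $F_\lambda(\vec Z)$ for \emph{some} interpolation functor $F_\lambda$, which we verified via reiteration. The role of the global $(\Delta)$-condition is to guarantee that the intersection $F_{\theta_0}(\yo) \cap F_{\theta_1}(\yo)$ is genuinely intermediate with respect to the couple $\vec W$ and, more importantly, that elements of it are captured by the family $\{F_\theta\}_{\theta_0 < \theta < \theta_1}$ — concretely, the embedding $F_{\theta_0}(\yo) \cap F_{\theta_1}(\yo) \hookrightarrow \bigcap_{\theta_0 < \theta < \theta_1} F_\theta(\yo)$ together with the Gagliardo-completion bound lets one identify sums and intersections correctly, so that the abstract Proposition~\ref{kernel} applies to $\vec Z$ and $\vec W$ as honest Banach couples.

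I expect the main obstacle to be a bookkeeping subtlety rather than a conceptual one: one must be careful that ``$T_\lambda$'' computed inside the reiterated couple really coincides, as an operator, with $T_{(1-\lambda)\theta_0+\lambda\theta_1}$ on $\xo$ — this needs the reiteration identity to hold with the norms comparable uniformly on compact subintervals (cf. Proposition~\ref{stablefunctors}), and it needs the $(\Delta)$-condition to ensure that taking inverses commutes with the various inclusions. Once these identifications are in place, the argument is just an application of Proposition~\ref{kernel}, and no further estimates are required.
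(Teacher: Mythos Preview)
Your argument has a genuine gap at the point where you invoke Proposition~\ref{kernel}. Condition~(iii) there reads ``for \emph{any} interpolation functor $G$ the operator $T|_{G(\ao)}$ is invertible'', not ``for some''. You have only verified invertibility of $T$ on $F_\lambda(\vec Z)$ for $\lambda\in(0,1)$, which via reiteration is $T_{(1-\lambda)\theta_0+\lambda\theta_1}$---and that is nothing new: it is already given by the hypothesis that $[\theta_0,\theta_1]\subset I$. In particular you have \emph{not} checked invertibility for the sum functor $G(\ao)=A_0+A_1$, and the implication (iii)$\Rightarrow$(ii) in the paper's proof of Proposition~\ref{kernel} goes precisely through that choice. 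So your appeal to (iii)$\Rightarrow$(i) is circular, and neither stability, reiteration, nor the global $(\Delta)$-condition has actually been used.

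The paper follows the same reduction via Proposition~\ref{kernel}, but it verifies condition~(ii) directly: it shows that $T\colon F_{\theta_0}(\xo)+F_{\theta_1}(\xo)\to F_{\theta_0}(\yo)+F_{\theta_1}(\yo)$ is injective. Given $x=x_0+x_1$ with $Tx=0$, one sets $y:=Tx_0=-Tx_1\in F_{\theta_0}(\yo)\cap F_{\theta_1}(\yo)$ and must show $x_0=-x_1$. Here is where the hypotheses do real work. The left inclusion of the $(\Delta)$-condition places $y$ in every $F_\theta(\yo)$, $\theta\in(\theta_0,\theta_1)$, with a uniform norm bound. Stability applied to the reiterated couple $(\tilde X_0,\tilde X_1)=(F_{\theta_0}(\xo),F_{\theta_1}(\xo))$, together with a compactness covering of $[\theta_0,\theta_1]$, shows that $T_\theta^{-1}y$ is independent of $\theta$; call it $\bar x$. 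The uniform bound (iii) in the definition of stability then gives $\bar x\in\bigcap_{\theta_0<\theta<\theta_1}F_\theta(\xo)$ with finite norm, and now the \emph{right} inclusion of the $(\Delta)$-condition puts $\bar x$ into $(F_{\theta_0}(\xo))^c\cap(F_{\theta_1}(\xo))^c$. Finally one checks that invertibility of $T$ on $F_{\theta_j}(\xo)$ forces injectivity on the Gagliardo completion $(F_{\theta_j}(\xo))^c$, so $x_0-\bar x=0=x_1+\bar x$, whence $x=0$. Your last paragraph gestures at the $(\Delta)$-condition but does not carry out any of these steps; in particular the passage through the Gagliardo completions is essential and is missing from your sketch.
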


\begin{proof} We deduce from Proposition \ref{kernel} that it is enough to prove invertibility of the operator
\[
T\colon  F_{\theta _0}(\xo)+F_{\theta _1}(\xo)\rightarrow F_{\theta _0}(\yo)+F_{\theta _1}(\yo) 
\]
Since $T\colon F_{\theta_{j}}(X_0,X_1)\rightarrow F_{\theta_{j}}(Y_0,Y_1)$ for $j \in\{0, 1\}$, is invertible,
$T\colon F_{\theta _0}(X_0, X_1)+F_{\theta_1}(X_0, X_1)\rightarrow F_{\theta_0}(Y_0, Y_1)+F_{\theta _1}(Y_0, Y_1)$
is surjective. Then, it is enough to prove that $T\colon F_{\theta_0}(X_0, X_1)+F_{\theta_1}(X_0, X_1)\rightarrow
F_{\theta _0}(Y_0, Y_1)+F_{\theta _1}(Y_0, Y_1)$ is injective. Let $x\in F_{\theta _0}(X_0, X_1)+F_{\theta _1}(X_0, X_1)$
and $Tx=0$. Then $x=x_0+x_1$, where $x_j\in F_{\theta_j}(X_0, X_1), \j\in \{0, 1\}$. From $Tx=0$, we have
\[
y= Tx_0 = - Tx_1\in F_{\theta_0}(Y_0, Y_1)\cap F_{\theta_1}(Y_0, Y_1).
\]
Since $\{F_{\theta }\}_{\theta \in (0, 1)}$ satisfies the global $(\Delta)$-condition,  $y\in F_{\theta }(\yo)$ for all
$\theta \in (\theta_0, \theta_1)$ and
\begin{align*}
\tag {$*$} \, \Arrowvert y \Arrowvert _{\bigcap \limits_{\theta_0<\theta <\theta_1}F_{\theta }(\yo}\leq C \Arrowvert y
\Arrowvert _{F_{\theta _0}(\yo)\cap F_{\theta _1}(\yo)}.
\end{align*}
Fix $\theta_0$, $\theta_1\in I$. Then for any $\theta \in (\theta_0, \theta_1)$ the operator $T_{\theta }=T|_{F_{\theta }(\xo)}$
is invertible and so $x_{\theta}:= T^{-1}_{\theta}y$ is well defined.

We claim that $x_{\theta}$ does not depend on $\theta \in (\theta_0, \theta_1)$. To see this let us consider the couples
\[
(\widetilde{X}_0, \widetilde{X}_1)=(F_{\theta _0}(\xo), F_{\theta _1}(\xo)), \ \ (\widetilde{Y}_0, \widetilde{Y}_1)
=(F_{\theta _0}(\yo), F_{\theta _1}(\yo)).
\]
Let $\widetilde{T}_{\lambda }$ be the restriction of the operator $T\colon (X_0, X_1) \to (Y_0, Y_1)$ to
$F_{\lambda }(\widetilde{X}_0, \widetilde{X}_1)$. Since $\theta =(1-\lambda )\theta_0+\lambda \theta_1$ for some $\lambda \in (0, 1)$,
it follows from our hypothesis (on the reiteration condition) that $\widetilde{T}_{\lambda }x=T_{\theta}x$, for any
$x\in  F_{\lambda} (\widetilde  X_0, \tilde X_1)=F_{\theta }(X_0, X_1)$. Hence
\[
\tilde{T}_{\lambda }\colon F_{\lambda }(\widetilde{X}_0, \widetilde{X}_1)\rightarrow  F_{\lambda }(\widetilde{Y}_0, \widetilde{Y}_1)
\]
is invertible for all $\lambda \in (0, 1)$.

If $\lambda_1$, $\lambda_2 \in (0, 1)$, then the stability of the family $F_{\theta }$ combined with the compactness of the subinterval
$[\lambda_1, \lambda_2]$ of $(0, 1)$ yields that inverse operators
\[
\widetilde{T}_{\lambda_1}^{-1}\colon F_{\lambda_1}(\widetilde{Y}_0, \widetilde{Y}_1)\rightarrow F_{\lambda_1}(\widetilde{X}_0,
\widetilde{X}_1) \ \ \text{and} \  \ \widetilde{T}_{\lambda_2}^{-1}\colon F_{\lambda_2}(\tilde{Y}_0, \widetilde{Y}_1)\rightarrow F_{\lambda_2}(\tilde{X}_0, \widetilde{X}_1)
\]
agree on $\widetilde{Y}_0\cap \widetilde{Y}_1=F_{\theta_0}(Y_0, Y_1)\cap F_{\theta_1}(Y_0, Y_1)$. Hence the element
$x_{\theta }=T^{-1}_{\theta }y=\widetilde{T}_{\lambda}^{-1}y$ is independent of $\theta \in (\theta_0, \theta_1)$ and we denote it
by $\bar{x}$. Moreover,  the element $\bar{x}$ belongs to the set $\bigcap \limits_{\theta_0<\theta  <\theta_1} F_{\theta }(\xo)$ and
$\Arrowvert \bar{x} \Arrowvert _{\bigcap \limits_{\theta_0<\theta  <\theta_1} F_{\theta }(\xo)}<\infty$. Indeed, from invertibility
of the operator $T$ on the whole interval $I$, stability of the family $F_{\theta }$ and compactness of the interval $[\theta_0, \theta_1]$,
we get that
\[
\sup\limits_{\theta_0<\theta<\theta_1}\Arrowvert T^{-1}_{\theta }  \Arrowvert _{F_{\theta}(Y_0, Y_1)\rightarrow F_{\theta }(X_0, X_1)} < \infty.
\]
Hence from the shown above estimate $(*)$, we obtain
\[
\Arrowvert \bar{x} \Arrowvert _{\bigcap \limits_{\theta_0<\theta  <\theta_1} F_{\theta }(\xo)}\leq \sup\limits_{\theta_0<\theta<\theta_1}\Arrowvert T^{-1}_{\theta }  \Arrowvert _{F_{\theta }(\yo)\rightarrow F_{\theta }(\xo)}\Arrowvert y \Arrowvert _{F_{\theta_0 }(\yo)\cap F_{\theta_1}(\yo)}<\infty.
\]
Thus using the right hand continuous inclusion in the definition of the global $(\Delta)$-condition, we conclude that
\[
\bar{x}\in (F_{\theta_0}(\xo))^c\cap (F_{\theta_1}(\xo))^c.
\]
Now to finish the proof we decompose the element $x$ as
\[
x=x_0+x_1 = (x_0 - \bar{x})+(x_1 + \bar{x}). 
\]
Since $Tx_0 = - Tx_1=T\bar{x}$, it is clear that
\[
x_0 - \bar{x} \in (F_{\theta _0}(\xo))^c \cap \ker T \ \text{and} \ x_1 + \bar{x} \in (F_{\theta_1}(\xo))^c \cap \ker T.
\]
Invertibility of the operator $T$ on $F_{\theta_j}(\xo)$ implies injectivity of $T$ on $(F_{\theta_j}(\xo))^c$
for each $j\in \{0, 1\}$. This implies that both $x_0 - \bar{x}$ and $x_1 + \bar{x}$ are equal to zero. Consequently
$x=0$ and so the operator $T\colon F_{\theta _0}(\xo) + F_{\theta _1}(\xo) \to F_{\theta _0}(\yo)+F_{\theta _1}(\yo)$
is invertible.
\end{proof}

To show applications to complex and real interpolation methods of the above results we need a lemma.

\begin{lemma}
\label{CalRealstable}
The families $\{[\,\cdot\,]_{\theta}\}_{\theta \in (0, 1)}$ of the Calde\'on functors, as well as
$\{(\,\cdot\,)_{\theta, q}\}_{\theta \in (0, 1)}$ with $1\leq q\leq \infty$ of the Lions--Peetre
interpolation functors, are both stable.
\end{lemma}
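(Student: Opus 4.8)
The plan is to deduce Lemma~\ref{CalRealstable} from the already-established machinery by realizing each classical family as a (renormed) pseudolattice family and then invoking Corollary~\ref{stableex} together with Proposition~\ref{stablefunctors}. Recall from Section~2 that with $\bx = (FC, FC)$ one has $\bo_{\bx, e^\theta} = [B_0, B_1]_\theta$ up to equivalence of norms, and with $\mathcal{X}_0 = \mathcal{X}_1 = \ell_q$ one has $\bo_{\bx, e^\theta} = (B_0, B_1)_{\theta, q; J} = (B_0, B_1)_{\theta, q}$ up to equivalence of norms; moreover both $(FC, FC)$ and $(\ell_q, \ell_q)$ are couples of translation \emph{and} rotation invariant pseudolattices (translation invariance of $\ell_q$ is immediate, and the rotation map $\{b_n\} \mapsto \{e^{in\tau}b_n\}$ is an isometry on $\ell_q(B)$ since $|e^{in\tau}| = 1$; the corresponding facts for $FC$, the space of Fourier coefficients of continuous functions on the circle, are standard). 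Hence Corollary~\ref{stableex} applies directly to give that $\{(\,\cdot\,)_{\bx, e^\theta}\}_{\theta \in (0,1)}$ is a stable family for these two choices of $\bx$.

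First I would spell out, for the complex method, the identification $[\,\cdot\,]_\theta \cong (\,\cdot\,)_{(FC,FC), e^\theta}$ with norm constants that, by inspection of \cite{Cwikel}, are \emph{bounded on compact subintervals} of $(0,1)$ (in fact one may take them independent of $\theta$, or at worst continuous in $\theta$). Then Proposition~\ref{stablefunctors}(i) transfers stability of $\{(\,\cdot\,)_{(FC,FC),e^\theta}\}$ to stability of $\{[\,\cdot\,]_\theta\}$. Second, for the real method with $1 \le q < \infty$, I would combine the norm equivalence $(\,\cdot\,)_{\theta,q;J} = (\,\cdot\,)_{(\ell_q,\ell_q), e^\theta}$ with the classical equivalence $(B_0,B_1)_{\theta,q;J} = (B_0,B_1)_{\theta,q}$ (the $K$-functional space), again checking that the equivalence constants are locally bounded in $\theta$ on $(0,1)$ — this is a standard fact, e.g.\ from \cite[Chap.~3]{BL}, and can be made explicit via the discretization of the $K$- and $J$-functionals. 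Proposition~\ref{stablefunctors}(i) then yields stability of $\{(\,\cdot\,)_{\theta,q}\}$.

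The remaining case $q = \infty$ needs a small separate remark, since $\ell_\infty$ is a legitimate pseudolattice and $(FC,FC)$-type arguments do not intervene: one checks that $(\ell_\infty, \ell_\infty)$ is translation and rotation invariant exactly as above, so Corollary~\ref{stableex} gives stability of $\{(\,\cdot\,)_{(\ell_\infty,\ell_\infty),e^\theta}\}$, and the equivalence $(\,\cdot\,)_{(\ell_\infty,\ell_\infty),e^\theta} = (\,\cdot\,)_{\theta,\infty;J} = (\,\cdot\,)_{\theta,\infty}$ with locally bounded constants — valid for $0 < \theta < 1$ — transfers stability to the Lions--Peetre $(\theta,\infty)$-functor on the open interval $(0,1)$.

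The main obstacle, such as it is, is purely bookkeeping: one must verify that all the norm-equivalence constants appearing in the identifications $[\,\cdot\,]_\theta \cong (\,\cdot\,)_{(FC,FC),e^\theta}$ and $(\,\cdot\,)_{\theta,q} \cong (\,\cdot\,)_{(\ell_q,\ell_q),e^\theta}$ are bounded on compact subintervals of $(0,1)$, as required by the hypotheses of Proposition~\ref{stablefunctors}; none of the constants blows up except possibly at the endpoints $\theta = 0, 1$, which are excluded. Once this is in place the proof is a direct chain of citations: $\text{Corollary~\ref{stableex}} \Rightarrow$ stability of the pseudolattice family $\Rightarrow$ (via Proposition~\ref{stablefunctors}) stability of the classical family. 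I would also note explicitly that the translation and rotation invariance of $\ell_q$ (all $1 \le q \le \infty$) and of $FC$ is what licenses the use of Corollary~\ref{stableex} rather than merely Theorem~\ref{inverse}.
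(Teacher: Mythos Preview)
Your proposal is correct and follows essentially the same route as the paper: realize each classical family via a translation- and rotation-invariant pseudolattice couple ($(FC,FC)$ for the complex method, $(\ell_q,\ell_q)$ for the real method), invoke Corollary~\ref{stableex}, and then transfer stability through Proposition~\ref{stablefunctors} once the norm-equivalence constants are seen to be bounded on compact subintervals of $(0,1)$. The paper differs only in bookkeeping: for the complex case it routes the identification through the periodic space $[A_0,A_1]_\theta^{2\pi}$ via Ivtsan's result \cite{Ivtsan} and extracts an explicit bound $C(\theta)\le K/(\theta(1-\theta))$ from the proof in \cite{Cwikel}, and for the real case it likewise records an explicit constant of the form $B/(\theta(1-\theta))$; your appeal to ``inspection of \cite{Cwikel}'' and ``standard from \cite[Chap.~3]{BL}'' points to the same facts but leaves the constants implicit.
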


\begin{proof} Let $\{G_\theta\} := \{[\,\cdot\,]_{\theta}\}_{\theta \in (0, 1)}$. At first we note that
it is shown in \cite{Ivtsan} that for any Banach couple $(A_0, A_1)$ we have
\[
(A_0, A_1)_{\theta, e^{\theta}} \cong [A_0, A_1]_{\theta}^{\lambda}, \quad\, \theta \in (0,1),
\]
where $[A_0, A_1]_{\theta}^{\lambda}$ is the "periodic" interpolation space with $\lambda= 2\pi$. It follows
immediately from the definition of the periodic interpolation space that
\[
[A_0, A_1]_{\theta}^{2\pi} \hookrightarrow [A_0, A_1]_{\theta}
\]
with norm of the inclusion map less or equal than $1$. Analysis of the proof of Equivalence in \cite[p.~1008]{Cwikel}
shows that
\[
[A_0, A_1]_{\theta} \hookrightarrow [A_0, A_1]_{\theta}^{2\pi}, \quad\, \theta \in (0, 1)
\]
with norm of the inclusion map less or equal than $C(\theta)$. Standard calculus shows that there exists a~positive
constant $K$ independent of $\theta$ such that
\[
C(\theta) \leq \frac{K}{\theta(1-\theta)}.
\]
Altogether yields that the family $\{F_{\theta}\}:=  \{(\,\cdot\,)_{(FC, FC), e^{\theta}}\}_{\theta \in (0, 1)}$ satisfies
\[
F_{\theta}(A_0, A_1) = G_{\theta}(A_0, A_1), \quad\, \theta \in (0,1),
\]
where the constants of equivalence of norms are bounded on any compact subinterval of $(0, 1)$. To finish it is enough to
apply Corollary \ref{stableex} and Proposition \ref{stablefunctors}.

Now we consider the case $\{G_\theta\}:=\{(\,\cdot\,)_{\theta, q}\}_{\theta\in (0, 1)}$ for any fixed $1\leq q\leq \infty$.
Put $\{F_\theta\} := \{(\,\cdot\,)_{(\ell_q, \ell_q), e^\theta}\}_{\theta \in (0,1)}\big\}$. It was noticed in Section 2
that
\[
F_\theta(\ao) \cong G_\theta(\ao)
\]
for all Banach couples. It is well known that $F_{\theta}(\ao) = G_{\theta}(\ao)$ up to equivalence of norms. Standard
calculus shows there exist absolute positive constants $C_1>0$ and $C>0$, independent on $\theta$ and
$q\in [1, \infty)$, such that
\[
C_1\|\cdot\|_{F\theta(\ao)} \leq \|\cdot\|_{G_\theta(\ao)} \leq \frac{B}{\theta(1-\theta)}\|\cdot\|_{G_\theta(\ao)}.
\]
Again applying Corollary \ref{stableex} and Proposition \ref{stablefunctors} we are done.
\end{proof}

We are ready to prove the compatibility theorem for the family of real interpolation functors.

\begin{theorem}
\label{realcompatibility}
Let $1\leq q\leq \infty$ and let  $T\colon (X_0, X_1)\rightarrow (Y_0, Y_1)$ be a~linear bounded operator
and $I\subset (0, 1)$ be an interval of invertibility of \,$T$ with respect to the family
$\{(\cdot)_{\theta,q}\}_{\theta\in (0, 1)}$ of real interpolation
functors. Then for any $\theta_0$, $\theta_1 \in I$ the inverse operators $T^{-1}_{\theta_0}$ and $T^{-1}_{\theta_1}$
agree on $(Y_0, Y_1)_{\theta_0, q}\cap (Y_0, Y_1)_{\theta_1, q}$.
\end{theorem}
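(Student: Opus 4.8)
The plan is to deduce the theorem from Theorem~\ref{maintheorem} applied to the family of real interpolation functors $\{(\,\cdot\,)_{\theta,q}\}_{\theta\in(0,1)}$, for any fixed $q\in[1,\infty]$. By Lemma~\ref{CalRealstable} this family is stable, so the only thing left to check is that it satisfies the reiteration condition and the global $(\Delta)$-condition; granting these, Theorem~\ref{maintheorem} gives at once that $T^{-1}_{\theta_0}$ and $T^{-1}_{\theta_1}$ agree on $(Y_0,Y_1)_{\theta_0,q}\cap(Y_0,Y_1)_{\theta_1,q}$ for all $\theta_0,\theta_1\in I$. (For $1\le q<\infty$ one could alternatively use Theorem~\ref{realstable}, since $(\,\cdot\,)_{\theta,q}$ is then a regular $K$-functor and the remark following that theorem shows $Y_0\cap Y_1$ is dense in $(\yo)_{\alpha,q}\cap(\yo)_{\beta,q}$ for all $\alpha,\beta\in(0,1)$; I would nonetheless run the argument through Theorem~\ref{maintheorem}, which treats all $q\in[1,\infty]$ uniformly.)

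For the reiteration condition I would invoke the classical reiteration theorem for the real $K$-method, namely $\big((A_0,A_1)_{\theta_0,q},(A_0,A_1)_{\theta_1,q}\big)_{\lambda,q}=(A_0,A_1)_{(1-\lambda)\theta_0+\lambda\theta_1,q}$ for all $\theta_0,\theta_1,\lambda\in(0,1)$ (see, e.g., \cite[Theorem~3.5.3]{BL}), which holds up to equivalence of norms with constants locally bounded in the parameters. Strictly the reiteration \emph{condition} as stated asks for equality of norms, but an inspection of the proof of Theorem~\ref{maintheorem} shows that only the set-theoretic identity $F_{\lambda}(F_{\theta_0}(\xo),F_{\theta_1}(\xo))=F_{(1-\lambda)\theta_0+\lambda\theta_1}(\xo)$ and the fact that $\widetilde{T}_{\lambda}$ and $T_{\theta}$ are restrictions of the same operator $T$ are actually used there, so equivalence of norms is enough; if one prefers an exact statement, one may first work with the $J$-realisation $(\,\cdot\,)_{(\ell_q,\ell_q),e^{\theta}}$ of \cite{CKMR} and then pass back via Proposition~\ref{stablefunctors}.

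The crux is the global $(\Delta)$-condition, and for the real method it follows from the uniform identity $\bigcap_{\theta_0<\theta<\theta_1}(A_0,A_1)_{\theta,q}=(A_0,A_1)_{\theta_0,q}\cap(A_0,A_1)_{\theta_1,q}$. The inclusion $(A_0,A_1)_{\theta_0,q}\cap(A_0,A_1)_{\theta_1,q}\hookrightarrow\bigcap_{\theta_0<\theta<\theta_1}(A_0,A_1)_{\theta,q}$ is the log-convexity estimate: writing $\theta=(1-\mu)\theta_0+\mu\theta_1$, one has $t^{-\theta}K(t,a;A_0,A_1)=(t^{-\theta_0}K(t,a;A_0,A_1))^{1-\mu}(t^{-\theta_1}K(t,a;A_0,A_1))^{\mu}$, and applying Hölder's inequality (for $q<\infty$) or passing to the supremum (for $q=\infty$) in the norm of $(A_0,A_1)_{\theta,q}$ yields $\|a\|_{\theta,q}\le\|a\|_{\theta_0,q}^{1-\mu}\|a\|_{\theta_1,q}^{\mu}\le\max\{\|a\|_{\theta_0,q},\|a\|_{\theta_1,q}\}$. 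For the reverse inclusion, suppose $M:=\sup_{\theta_0<\theta<\theta_1}\|a\|_{\theta,q}<\infty$; splitting the integral defining $\|a\|_{\theta_0,q}$ at $t=1$, one has $t^{-\theta_0}\le t^{-\theta}$ on $(0,1]$ for every $\theta>\theta_0$, so $\int_0^1(t^{-\theta_0}K(t,a;A_0,A_1))^q\,\frac{dt}{t}\le M^q$, while on $[1,\infty)$ the monotone convergence theorem gives $\int_1^{\infty}(t^{-\theta_0}K(t,a;A_0,A_1))^q\,\frac{dt}{t}=\lim_{\theta\downarrow\theta_0}\int_1^{\infty}(t^{-\theta}K(t,a;A_0,A_1))^q\,\frac{dt}{t}\le M^q$; hence $\|a\|_{\theta_0,q}\le 2^{1/q}M$, and by symmetry $\|a\|_{\theta_1,q}\le 2^{1/q}M$ (for $q=\infty$ one argues the same way, using that $K(t,a;A_0,A_1)\le\inf_{\theta_0<\theta<\theta_1}Mt^{\theta}$, which equals $Mt^{\theta_1}$ for $t\le1$ and $Mt^{\theta_0}$ for $t\ge1$). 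Since trivially $(A_0,A_1)_{\theta_j,q}\hookrightarrow((A_0,A_1)_{\theta_j,q})^{\rm c}$ with norm $\le 1$, this identity yields the whole chain of embeddings in the definition of the global $(\Delta)$-condition. I expect the main obstacle to be precisely this reverse inclusion (the monotone-convergence step and its $q=\infty$ analogue) together with the routine but necessary check that all equivalence constants above stay bounded as $\theta$ runs over the compact interval $[\theta_0,\theta_1]\subset(0,1)$, so that stability and the global $(\Delta)$-condition genuinely transfer to $\{(\,\cdot\,)_{\theta,q}\}$; once this is in place, Theorem~\ref{maintheorem} applies and the proof is complete.
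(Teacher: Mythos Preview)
Your proposal is correct and follows essentially the same route as the paper: invoke stability from Lemma~\ref{CalRealstable}, the classical reiteration theorem, and then verify the global $(\Delta)$-condition by proving the set identity $\bigcap_{\theta_0<\theta<\theta_1}(A_0,A_1)_{\theta,q}=(A_0,A_1)_{\theta_0,q}\cap(A_0,A_1)_{\theta_1,q}$, after which Theorem~\ref{maintheorem} applies. Your technical details differ only cosmetically---you use H\"older/log-convexity for the forward inclusion where the paper uses the pointwise bound $t^{-\theta}\le t^{-\theta_0}+t^{-\theta_1}$ and Minkowski, and you split the reverse-inclusion integral at $t=1$ with monotone convergence where the paper bounds over $[a,b]$ and lets $\theta\to\theta_j$---but the strategy is the same, and your remark that the trivial embedding into the Gagliardo completion suffices (the paper instead cites $((A_0,A_1)_{\theta,q})^{\rm c}\cong(A_0,A_1)_{\theta,q}$) is a perfectly good simplification.
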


\begin{proof} It is well known that the family of real interpolation functors  satisfies the reiteration condition.
Moreover stability of this family follows from Lemma \ref{CalRealstable}. Thus in order to apply Theorem \ref{maintheorem},
we only need to check that this family satisfies the global ($\Delta $)-condition (\ref{embeddings}).

Since $((A_0, A_1)_{\theta, q})^{c} \cong (A_0, A_1)_{\theta , q}$), it is enough to prove that for any Banach couple
$\ao =(A_0, A_1)$, we have
\[
(A_0, A_1)_{\theta_0, q}\cap (A_0, A_1)_{\theta_1, q} =  \bigcap \limits_{\theta_0<\theta <\theta_1} (A_0, A_1)_{\theta , q}.
\]
Let $x\in (A_0, A_1)_{\theta_0, q}\cap (A_0, A_1)_{\theta_1, q}$ and $\theta_0<\theta <\theta_1$. Then
\begin{align*}
\Arrowvert x \Arrowvert _{\theta , q} & = \bigg( \int \limits_0^\infty \big(t^{-\theta }K(t,x; \ao)\big)^q \frac{dt}{t}\bigg)^{1/q} \\
& \leq \bigg(\int\limits_0^\infty \big(t^{-\theta _0}K(t,x; \ao)\big)^q \frac{dt}{t}\bigg)^{1/q}+ \bigg(\int \limits_0^\infty \big(t^{-\theta_1}
K(t,x; \ao)\big)^q \frac{dt}{t}\bigg)^{1/q} \\
& \leq 2 \max \{\Arrowvert x \Arrowvert _{\theta_0, q}, \Arrowvert x \Arrowvert_{\theta_1, q}\}.
\end{align*}
This yields $(A_0, A_1)_{\theta_0, q}\cap (A_0, A_1)_{\theta_1, q} \hookrightarrow \bigcap \limits_{\theta_0<\theta <\theta_1} (A_0, A_1)_{\theta, q}.$

Now let $x\in \bigcap \limits_{\theta_0<\theta <\theta_1} (A_0, A_1)_{\theta , q}$ and $0<a<b<\infty$. Since $\theta \in (\theta_0, \theta_1 )$
is arbitrary,
\[
\bigg(\int \limits_{a}^{b} \big(t^{-\theta_0} K(t, x; \ao)\big)^q\frac{dt}{t}\bigg)^{1/q}\leq b^{(\theta -\theta_0)}
\bigg(\int \limits_{a}^{b} \big(t^{-\theta } K(t, x; \ao))^q\frac{dt}{t}\bigg)^{1/q}.
\]
Similarly, we get that
\[
\bigg(\int \limits_{a}^{b} \big(t^{-\theta_1} K(t, x; \ao)\big)^q\frac{dt}{t}\bigg)^{1/q}\leq a^{(\theta -\theta_1)}
\bigg(\int \limits_{a}^{b} \big(t^{-\theta } K(t, x; \ao)\big)^q\frac{dt}{t}\bigg)^{1/q}.
\]
Taking in account that these inequalities are correct for any $\theta \in (\theta_0, \theta_1)$ and for arbitrary
$a$ and $b$, we get that
\[
\max \{\Arrowvert x \Arrowvert _{\theta_0, q},
\Arrowvert x \Arrowvert _{\theta_1, q}\}\leq \Arrowvert x\Arrowvert _{\bigcap \limits_{\theta_0<\theta <\theta_1} \ao_{\theta , q}}.
\]
Hence $\bigcap \limits_{\theta_0<\theta <\theta_1} (A_0, A_1)_{\theta , q}
\hookrightarrow (A_0, A_1)_{\theta_0, q}\cap (A_0, A_1)_{\theta_1, q}$. Similarly, we prove the case $p=\infty$.
\end{proof}

From Theorem \ref{maintheorem} also follows the compatibility theorem for the family $\{F_{\theta}\}_{\theta \in (0, 1)}$ of complex
interpolation functors:

\begin{equation}
\label{complexFunctor}
F_{\theta}(A_0, A_1)=[A_0, A_1]_{\theta}.
\end{equation}

\begin{theorem} \label{complexcompatibility}
Let  $T\colon (X_0, X_1)\rightarrow (Y_0, Y_1)$ be an operator between couples of complex Banach spaces and let $I\subset (0, 1)$ be an
interval of invertibility of $T$ with respect to the family of interpolation functors defined by $($\ref{complexFunctor}$)$. Then for any
$\theta_0$, $\theta_1 \in I$ the inverse operators $T^{-1}_{\theta_0}$ and $T^{-1}_{\theta_1}$ agree on $[Y_0, Y_1]_{\theta_0 }\cap [Y_0, Y_1]_{\theta_1}.$
\end{theorem}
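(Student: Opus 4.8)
The plan is to obtain Theorem \ref{complexcompatibility} as a direct application of Theorem \ref{maintheorem} to the family of complex interpolation functors $\{[\,\cdot\,]_\theta\}_{\theta\in(0,1)}$. For this I must check the three standing hypotheses of Theorem \ref{maintheorem} for this family: stability, the reiteration condition, and the global $(\Delta)$-condition~(\ref{embeddings}). Stability is exactly Lemma~\ref{CalRealstable}. The reiteration condition, namely $[[\ao]_{\theta_0},[\ao]_{\theta_1}]_{\lambda}=[\ao]_{(1-\lambda)\theta_0+\lambda\theta_1}$ for every Banach couple $\ao$ and all $\theta_0,\theta_1,\lambda\in(0,1)$, is the classical reiteration theorem of Calder\'on for the complex method. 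So the only genuine work left is the verification of the global $(\Delta)$-condition, and that is where I expect the main difficulty to lie.

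Fix a Banach couple $\ao=(A_0,A_1)$ and $0<\theta_0<\theta_1<1$, put $\bo:=([\ao]_{\theta_0},[\ao]_{\theta_1})$, and for $\theta\in(\theta_0,\theta_1)$ set $\lambda=\lambda(\theta):=(\theta-\theta_0)/(\theta_1-\theta_0)\in(0,1)$, so that by reiteration $[\ao]_{\theta}=[\bo]_{\lambda}$. For the left-hand inclusion of~(\ref{embeddings}) I would invoke the log-convexity inequality of the complex method: for $x\in[\ao]_{\theta_0}\cap[\ao]_{\theta_1}$,
\[
\|x\|_{[\ao]_{\theta}}=\|x\|_{[\bo]_{\lambda}}\leq\|x\|_{[\ao]_{\theta_0}}^{1-\lambda}\,\|x\|_{[\ao]_{\theta_1}}^{\lambda}\leq\max\big\{\|x\|_{[\ao]_{\theta_0}},\|x\|_{[\ao]_{\theta_1}}\big\},
\]
which shows $[\ao]_{\theta_0}\cap[\ao]_{\theta_1}\hookrightarrow\bigcap_{\theta_0<\theta<\theta_1}[\ao]_{\theta}$ with inclusion norm at most $1$.

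The right-hand inclusion of~(\ref{embeddings}) is the crux. Let $x\in\bigcap_{\theta_0<\theta<\theta_1}[\ao]_{\theta}$ with $M:=\sup_{\theta_0<\theta<\theta_1}\|x\|_{[\ao]_{\theta}}<\infty$. The universal embedding $[\bo]_{\lambda}\hookrightarrow\bo_{\lambda,\infty}$, which holds for every Banach couple with inclusion constant bounded independently of $\lambda\in(0,1)$, combined with $[\ao]_{\theta}=[\bo]_{\lambda(\theta)}$, gives $x\in\bo_{\lambda,\infty}$ for all $\lambda\in(0,1)$ with $\sup_{0<\lambda<1}\|x\|_{\bo_{\lambda,\infty}}\leq CM$. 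Written out, this means $K(t,x;\bo)\leq CM\,t^{\lambda}$ for all $t>0$ and all $\lambda\in(0,1)$; taking the infimum over $\lambda$ yields $K(t,x;\bo)\leq CM\min\{t,1\}$, hence $\sup_{t>0}K(t,x;\bo)<\infty$ and $\sup_{t>0}t^{-1}K(t,x;\bo)<\infty$. By the identifications $(B_0)^{\mathrm c}\cong\bo_{0,\infty}$ and $(B_1)^{\mathrm c}\cong\bo_{1,\infty}$ recalled in Section~2, this says precisely that $x\in([\ao]_{\theta_0})^{\mathrm c}\cap([\ao]_{\theta_1})^{\mathrm c}$, with norm controlled by $CM$. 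This completes the verification of~(\ref{embeddings}), and Theorem~\ref{maintheorem} then applies and delivers the stated conclusion for any $\theta_0,\theta_1$ in an interval of invertibility of $T$.

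The one point that genuinely needs care, and which I regard as the main obstacle, is the uniformity in $\lambda$ (equivalently in $\theta$) of the constants entering the argument near the endpoints $\lambda=0,1$: the equivalence constant in the reiteration identity $[\ao]_\theta=[\bo]_{\lambda(\theta)}$ and, above all, the inclusion norm of $[\bo]_{\lambda}\hookrightarrow\bo_{\lambda,\infty}$. All that the $K$-functional estimate $K(t,x;\bo)\le C(\lambda)M\,t^{\lambda}$ actually requires is that $C(\lambda)$ remain bounded as $\lambda\to0^{+}$ and $\lambda\to1^{-}$; the standard estimates supply constants bounded by an absolute constant, so this suffices, but it is the step I would write out most carefully.
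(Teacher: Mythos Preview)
Your proposal is correct and follows essentially the same route as the paper's own proof: stability via Lemma~\ref{CalRealstable}, the Calder\'on--Cwikel reiteration formula, the log-convexity inequality for the left inclusion in~(\ref{embeddings}), and the embedding $[\bo]_{\lambda}\hookrightarrow\bo_{\lambda,\infty}$ together with the identifications $(B_j)^{\mathrm c}\cong\bo_{j,\infty}$ for the right inclusion. The only refinement the paper adds over your sketch is that it cites the reiteration identity as holding with \emph{equality of norms} (Cwikel) and uses the inequality $\|x\|_{\theta,\infty}\leq\|x\|_{[\,\cdot\,]_\theta}$ with constant~$1$ (from \cite[Theorem~4.7.1]{BL}), which immediately disposes of the uniformity-in-$\lambda$ issue you flagged in your last paragraph.
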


\begin{proof} As well as in the proof of Theorem \ref{realcompatibility} it is enough to prove the global ($\Delta$)-condition for
the family $\{[\,\cdot\,]){\theta}\}$ for arbitrary Banach couple $(A_0, A_1)$:
\[
[A_0, A_1]_{\theta_0}\cap [A_0, A_1]_{\theta_1}\hookrightarrow \bigcap
\limits_{\theta_0<\theta <\theta_1} [A_0, A_1]_{\theta } \hookrightarrow ([A_0, A_1]_{\theta_0})^c\cap ([A_0, A_1]_{\theta_1})^{c},
\]
where Gagliardo completion $([A_0, A_1]_{\theta_j})^{c}$ for $j\in \{0, 1\}$ is taken with respect to the sum $[A_0, A_1]_{\theta_0}
+ [A_0, A_1]_{\theta_1}$. Since the reiteration formula
\[
\left[[A_0, A_1]_{\theta_0},  [A_0, A_1]_{\theta_1} \right]_{\lambda} = [A_0, A_1]_{(1-\lambda )\theta_0+\lambda \theta_1}
\]
holds with equality of norms for any $\theta_0, \theta_1, \lambda \in (0, 1)$ (see \cite{Cwikel}). Hence,
for any $x\in [A_0, A_1]_{\theta_0}\cap [A_0, A_1]_{\theta_1}$, we have
\[
\Arrowvert x \Arrowvert _{(1-\lambda )\theta_0+\lambda \theta_1} \leq \Arrowvert x \Arrowvert _{\theta_0}^{1-\lambda } \Arrowvert x \Arrowvert _{\theta_1}^{\lambda } \leq \max\{\Arrowvert x \Arrowvert _{\theta_0}, \Arrowvert x \Arrowvert _{\theta_1} \}.
\]
This proves that
\[
[A_0, A_1]_{\theta_0}\cap [A_0, A_1]_{\theta_1}\hookrightarrow \bigcap \limits_{\theta_0<\theta <\theta_1} [A_0, A_1]_{\theta}.
\]
The proof of Theorem 4.7.1 in \cite{BL} shows that for any $x\in [A_0, A_1]_{\theta }$
\[
\Arrowvert x\Arrowvert _{\theta , \infty} \leq \Arrowvert x\Arrowvert _{\theta }.
\]
Since
\[
\Arrowvert x\Arrowvert _{A_0}^c = \sup_{t>0} {K(t, x; \ao)} \quad\, \text{and} \ \Arrowvert x\Arrowvert_{A_1}^c 
= \sup_{t>0}{\frac{K(t, x; \ao)}{t}},
\]
we get that
\begin{align*}
\sup _{\theta_0<\theta <\theta_1}\Arrowvert x\Arrowvert_{\theta}& =\sup _{0<\lambda <1} \Arrowvert x\Arrowvert _{[\ao_{\theta_0},
\so_{\theta_1}]_\lambda }\geq \sup _{0<\lambda <1} \Arrowvert x\Arrowvert_{(\ao_{\theta_0}, \ao_{\theta_1})_\lambda , \infty }\\
& \geq \max \{\Arrowvert x\Arrowvert_{([A_0, A_1]_{\theta_0})^c}, \Arrowvert x\Arrowvert _{([A_0, A_1]_{\theta_1})^c}\},
\end{align*}
where the Gagliardo completion $([A_0, A_1]_{\theta_j})^c, \ j\in\{0, 1\}$ is taken with respect to the sum
$[A_0, A_1]_{\theta_0} + [A_0, A_1]_{\theta_1}$. Thus we conclude that the second required continuous inclusion
\[
\bigcap  \limits_{\theta_0<\theta <\theta_1} [A_0, A_1]_{\theta } \hookrightarrow
([A_0, A_1]_{\theta_0})^{c} \cap ([A_0, A_1]_{\theta_1})^{c}
\]
holds and so this completes the proof.
\end{proof}

\begin{theorem}
\label{ComplexReal} Let  $T\colon (X_0, X_1)\rightarrow (Y_0, Y_1)$ be an operator between couples of complex
Banach spaces. If $T\colon [X_0, X_1]_{\theta _{*}}\rightarrow [Y_0, Y_1]_{\theta_{*}}$ is invertible for some
$\theta_{*} \in (0, 1)$, then
\begin{align*}
T\colon (X_0, X_1)_{\theta _{*},q}\rightarrow (Y_0, Y_1)_{\theta _{*},q}
\end{align*}
is invertible for all $q\in [1, \infty]$.
\end{theorem}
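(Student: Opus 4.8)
The plan is to deduce the statement from the complex compatibility theorem (Theorem~\ref{complexcompatibility}), the elementary Proposition~\ref{kernel}, and the classical reiteration theorem linking the complex and real methods. The starting point is that, choosing the pseudolattice couple $\bx=(FC,FC)$ so that $\xo_{\bx,e^\theta}\cong[X_0,X_1]_\theta$, Theorem~\ref{inverse} shows that the set of $\theta\in(0,1)$ for which $T\colon[X_0,X_1]_\theta\to[Y_0,Y_1]_\theta$ is invertible is open; since it contains $\theta_{*}$, it contains an open interval $I\ni\theta_{*}$, which is a fortiori an interval of invertibility of $T$ with respect to the complex family.

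First I would fix $\theta_0,\theta_1\in I$ with $\theta_0<\theta_{*}<\theta_1$ and write $\theta_{*}=(1-\lambda)\theta_0+\lambda\theta_1$ for a suitable $\lambda\in(0,1)$. Because $\theta_0,\theta_1$ lie in the same interval of invertibility, Theorem~\ref{complexcompatibility} gives that $T^{-1}_{\theta_0}$ and $T^{-1}_{\theta_1}$ agree on $[Y_0,Y_1]_{\theta_0}\cap[Y_0,Y_1]_{\theta_1}$. Setting
\[
(\widetilde X_0,\widetilde X_1):=\big([X_0,X_1]_{\theta_0},[X_0,X_1]_{\theta_1}\big),\qquad
(\widetilde Y_0,\widetilde Y_1):=\big([Y_0,Y_1]_{\theta_0},[Y_0,Y_1]_{\theta_1}\big),
\]
we obtain a bounded couple map $T\colon(\widetilde X_0,\widetilde X_1)\to(\widetilde Y_0,\widetilde Y_1)$ whose restriction to each $\widetilde X_j$ is invertible and whose inverses agree on $\widetilde Y_0\cap\widetilde Y_1$. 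Hence, by the implication (i)$\Rightarrow$(iii) of Proposition~\ref{kernel}, the operator $T|_{G(\widetilde X)}\colon G(\widetilde X)\to G(\widetilde Y)$ is invertible for \emph{every} interpolation functor $G$.

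It then remains to apply this with $G=(\,\cdot\,)_{\lambda,q}$, the Lions--Peetre real functor. Since $[X_0,X_1]_{\theta_j}$ is of class $\mathcal C(\theta_j)$ with respect to $(X_0,X_1)$, i.e.\ $(X_0,X_1)_{\theta_j,1}\hookrightarrow[X_0,X_1]_{\theta_j}\hookrightarrow(X_0,X_1)_{\theta_j,\infty}$ (Theorem~4.7.1 in \cite{BL}, already invoked in the proof of Theorem~\ref{complexcompatibility}), the reiteration theorem for $K$-spaces yields, with equivalence of norms,
\[
\big([X_0,X_1]_{\theta_0},[X_0,X_1]_{\theta_1}\big)_{\lambda,q}=(X_0,X_1)_{\theta_{*},q},
\]
and likewise for $Y$. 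Consequently $T\colon(X_0,X_1)_{\theta_{*},q}\to(Y_0,Y_1)_{\theta_{*},q}$ is invertible, and the argument is uniform in $q\in[1,\infty]$.

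I expect the only delicate point to be the invocation of Theorem~\ref{complexcompatibility}: one must make sure $\theta_0$ and $\theta_1$ can be chosen strictly straddling $\theta_{*}$ inside a single interval of invertibility, which is precisely guaranteed by the openness assertion of Theorem~\ref{inverse}. The passage through Proposition~\ref{kernel} and the reiteration identity are routine, so no further obstacle is anticipated.
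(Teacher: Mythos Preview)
Your proof is correct and follows essentially the same route as the paper: openness of the invertibility set for the complex method, then Theorem~\ref{complexcompatibility} to get compatibility of inverses at two points $\theta_0<\theta_*<\theta_1$, then Proposition~\ref{kernel} (i)$\Rightarrow$(iii) applied to $G=(\cdot)_{\lambda,q}$, and finally the reiteration identity. The only cosmetic difference is that the paper chooses $\theta_0,\theta_1$ symmetrically about $\theta_*$ so that $\lambda=\tfrac12$, whereas you allow an arbitrary $\lambda\in(0,1)$.
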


\begin{proof} Let $\theta_{*} \in I$, where $I$ is an interval of invertibility of $T$ with respect to the family
of functors of complex interpolation. Then there exists $\varepsilon >0$ such that $\theta_0=\theta - \varepsilon ,
\  \theta_1=\theta + \varepsilon \in I.$ From Theorem \ref{complexcompatibility} follows that inverse operators
$T^{-1}_{\theta_0}$ and $T^{-1}_{\theta_1}$ agree on $[Y_0, Y_1]_{\theta _0}\cap [Y_0, Y_1]_{\theta _1}.$ So from
Proposition \ref{kernel} \rm(iii) we obtain invertibility of the operator
\[
T\colon \left([X_0, X_1]_{\theta_0}, [X_0, X_1]_{\theta_1}\right)_{\frac{1}{2}, q}\rightarrow
([Y_0, Y_1]_{\theta_0}, [Y_0, Y_1]_{\theta_1})_{\frac{1}{2}, q}.
\]
To complete the proof it remains to note that
\[
([X_0, X_1]_{\theta_0}, [X_0, X_1]_{\theta_1})_{\frac{1}{2}, q}=(X_0, X_1)_{\theta_{*}, q} 
\]
and
\[
([Y_0, Y_1]_{\theta_0}, [Y_0, Y_1]_{\theta_1})_{\frac{1}{2}, q}=(Y_0, Y_1)_{\theta _{*},q}.
\]
\end{proof}

We conclude with the following result about the connections between spectrum of interpolated operators. The result
is an immediate consequence of Theorem \ref{ComplexReal}.

\begin{theorem}
Let $\bx = (\mathcal{X}_0, \mathcal{X}_1)$ be a~Banach couple of translation and rotation invariant pseudolattices
and let the family $\{F_\theta\} := \{(\,\cdot\,)_{\bx, e^\theta}\}_{\theta\in (0, 1)}$ be such the reiteration condition
holds for a~complex Banach couple $(X_0, X_1)$. If $\{F_\theta\}$ satisfies a~global $(\Delta)$-condition for
$(X_0, X_1)$ then, for any operator $T\colon \xo \to \xo$ and all $q\in [1, \infty]$,
we have
\[
\sigma\big(T, \xo_{\theta, p}\big) \subset \sigma\big(T, F_\theta(\xo)\big).
\]
\end{theorem}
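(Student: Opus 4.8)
The plan is to deduce the spectral inclusion directly from Theorem \ref{ComplexReal}, reading it as the contrapositive of the desired statement about resolvent sets. Recall that for a bounded operator $S$ on a Banach space $Z$, a point $\lambda \in \mathbb{C}$ lies in the resolvent set $\mathbb{C}\setminus \sigma(S, Z)$ precisely when $S - \lambda I \colon Z \to Z$ is invertible. So to prove $\sigma(T, \xo_{\theta, p}) \subset \sigma(T, F_\theta(\xo))$ it is equivalent to show the reverse inclusion of resolvent sets: if $\lambda \notin \sigma(T, F_\theta(\xo))$, i.e. $T - \lambda I$ is invertible on $F_\theta(\xo)$, then $\lambda \notin \sigma(T, \xo_{\theta, p})$, i.e. $T - \lambda I$ is invertible on $\xo_{\theta, p}$.

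First I would fix $\lambda$ in the resolvent set of $T$ acting on $F_\theta(\xo)$ and set $S := T - \lambda I \colon \xo \to \xo$, which is a bounded operator between the complex Banach couples (here $\xo = \yo$). Under the standing hypotheses — $\bx$ a couple of translation and rotation invariant pseudolattices, the reiteration condition, and the global $(\Delta)$-condition for $(X_0, X_1)$ — Lemma \ref{CalRealstable} (via Corollary \ref{stableex} and the equivalence $F_\theta(\xo) = (\xo)_{\bx, e^\theta}$ being, up to equivalence of norms, the Calderón space $[X_0, X_1]_\theta$) puts us in the setting where the family $\{[\,\cdot\,]_\theta\}$ applies. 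Concretely, since $S_\theta = S|_{F_\theta(\xo)}$ is invertible and $F_\theta(\xo) \cong [X_0, X_1]_\theta$ with norm-equivalence constants bounded on compacta, $S$ is invertible as an operator $[X_0, X_1]_\theta \to [X_0, X_1]_\theta$.

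Next I would invoke Theorem \ref{ComplexReal} with $\theta_* := \theta$: since $S \colon [X_0, X_1]_{\theta} \to [X_0, X_1]_{\theta}$ is invertible, the theorem gives that $S \colon (X_0, X_1)_{\theta, q} \to (X_0, X_1)_{\theta, q}$ is invertible for every $q \in [1, \infty]$. Rewriting, $(T - \lambda I)|_{\xo_{\theta, q}}$ is invertible, hence $\lambda \notin \sigma(T, \xo_{\theta, q})$. Taking complements yields $\sigma(T, \xo_{\theta, q}) \subset \sigma(T, F_\theta(\xo))$ for all $q$, which is the assertion (the statement uses $p$ for the real-method index and $q$ interchangeably).

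The only subtle point — and the place I would be most careful — is checking that the hypotheses of Theorem \ref{ComplexReal} are genuinely available here, since that theorem is stated for the \emph{Calderón} complex family $\{[\,\cdot\,]_\theta\}$ whereas our family is $\{(\,\cdot\,)_{\bx, e^\theta}\}$. The bridge is the identification, up to equivalence of norms with constants controlled on compact subintervals of $(0,1)$, of $(\xo)_{\bx, e^\theta}$ with $[X_0, X_1]_\theta$, established in Section 2 and used in the proof of Lemma \ref{CalRealstable}; invertibility of an operator is unaffected by passing to an equivalent norm, so the hypothesis ``$T$ invertible on $F_\theta(\xo)$'' transfers to ``$T - \lambda I$ (i.e. $S$) invertible on $[X_0, X_1]_\theta$'' without loss. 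Once that bridge is in place the argument is a one-line application of Theorem \ref{ComplexReal} followed by complementation; no further estimates are needed.
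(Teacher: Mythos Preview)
Your reduction to Theorem~\ref{ComplexReal} via the resolvent set is the right shape, and that is exactly how the paper obtains the result. However, the ``bridge'' you invoke is not there. You write that Section~2 and Lemma~\ref{CalRealstable} identify $(\xo)_{\bx, e^\theta}$ with $[X_0, X_1]_\theta$ up to equivalence of norms; this is only established for the particular choice $\bx = (FC, FC)$. For other couples of translation and rotation invariant pseudolattices, e.g.\ $\bx = (\ell_q, \ell_q)$, the space $(\xo)_{\bx, e^\theta}$ is the real $J$-method space $(X_0, X_1)_{\theta, q; J}$, not the Calder\'on complex space. So in the generality of the statement you cannot transport invertibility on $F_\theta(\xo)$ to invertibility on $[X_0, X_1]_\theta$ and then quote Theorem~\ref{ComplexReal} verbatim.

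What does go through is the \emph{argument} of Theorem~\ref{ComplexReal}, run with Theorem~\ref{maintheorem} in place of Theorem~\ref{complexcompatibility}. The hypotheses on $\{F_\theta\}$ are precisely tailored to this: Corollary~\ref{stableex} gives stability of $\{F_\theta\}$, and the assumed reiteration and global $(\Delta)$-conditions then feed directly into Theorem~\ref{maintheorem}. For $\lambda$ in the resolvent set of $T$ on $F_\theta(\xo)$, set $S = T - \lambda I$; pick $\theta_0 < \theta < \theta_1$ in the interval of invertibility of $S$; Theorem~\ref{maintheorem} makes $S_{\theta_0}^{-1}$ and $S_{\theta_1}^{-1}$ agree on $F_{\theta_0}(\xo)\cap F_{\theta_1}(\xo)$; Proposition~\ref{kernel}(iii) then gives invertibility of $S$ on $(F_{\theta_0}(\xo), F_{\theta_1}(\xo))_{1/2, q}$, which by the standard real reiteration theorem (the $F_{\theta_j}(\xo)$ being of class $\theta_j$) equals $(X_0, X_1)_{\theta, q}$. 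That yields $\lambda \notin \sigma(T, \xo_{\theta, q})$. No identification with the complex method is needed.
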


As a consequence, we obtain the following corollary.

\begin{corollary} Let $(X_0, X_1)$ be a couple of complex Banach spaces. Then, for any operator
$T\colon (X_0, X_1)\rightarrow (X_0, X_1)$ and for all $q\in [1, \infty]$, we have
\[
\sigma(T, \xo_{\theta, q}) \subset \sigma(T, [\xo]_{\theta}).
\]
\end{corollary}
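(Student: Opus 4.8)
The plan is to deduce this inclusion directly from Theorem~\ref{ComplexReal} by a routine resolvent argument. Fix $\theta \in (0, 1)$ and $q \in [1, \infty]$; since spectra are closed, it suffices to prove the reverse inclusion of resolvent sets, i.e., that $\lambda \notin \sigma(T, [\xo]_{\theta})$ implies $\lambda \notin \sigma(T, \xo_{\theta, q})$.

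First I would observe that for each scalar $\lambda$ the operator $T - \lambda I$ is again a bounded operator of the couple $(X_0, X_1)$ into itself, because the identity is bounded on both $X_0$ and $X_1$. Hence, by the interpolation functor property, the restriction of $T - \lambda I$ to $[\xo]_{\theta}$ is precisely $T_\theta - \lambda I$, and its restriction to $\xo_{\theta, q}$ is precisely the operator used in the definition of $\sigma(T, \xo_{\theta, q})$.

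Next, suppose $\lambda \notin \sigma(T, [\xo]_{\theta})$; then $T_\theta - \lambda I \colon [\xo]_{\theta} \to [\xo]_{\theta}$ is invertible. Applying Theorem~\ref{ComplexReal} to the operator $S := T - \lambda I \colon (X_0, X_1) \to (X_0, X_1)$ with $\theta_{*} = \theta$ and $\yo = \xo$, we conclude that $S \colon \xo_{\theta, q} \to \xo_{\theta, q}$ is invertible for every $q \in [1, \infty]$, so $\lambda$ belongs to the resolvent set of $T$ on $\xo_{\theta, q}$. Passing to complements in $\mathbb{C}$ gives $\sigma(T, \xo_{\theta, q}) \subset \sigma(T, [\xo]_{\theta})$, as claimed.

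There is essentially no obstacle here; the only mild point worth checking is that $T - \lambda I$ genuinely qualifies as an operator between the couples, so that Theorem~\ref{ComplexReal} applies verbatim, and that its restriction to each interpolation space agrees with the resolvent-type operator appearing in the definition of the spectrum — both being immediate from the interpolation property.
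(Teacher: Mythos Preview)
Your proof is correct and follows exactly the route the paper intends: the corollary is stated as an immediate consequence of Theorem~\ref{ComplexReal}, and the standard resolvent argument you give---apply that theorem to $S = T - \lambda I$ for each $\lambda$ in the resolvent set of $T$ on $[\xo]_\theta$---is precisely how one unpacks that remark. There is nothing to add.
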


We conclude with the following remark that Albrecht and M\"uller gave an example of a~ Banach couple $\xo$
and and operator $\colon \xo\to \xo$ for which $\sigma(T, \xo_{\theta, 1}) \neq \sigma(T, [\xo]_{\theta})$
(see \cite[Example 12]{AM}).

\vspace{2 mm}

\section{Order isomorphisms between Calder\'on spaces}

Throughout this section $(\Omega, \Sigma, \mu)$ denotes a~$\sigma$-finite measure space. The symbol
$L^0(\mu) := L^0(\Omega, \Sigma,\mu)$ stands for the space of (equivalence classes of $\mu$-a.e.~equal)
real-valued measurable functions on $\Omega$ with the topology of convergence in measure on $\mu$-finite
sets. As usual the order $|f|\leq |g|$ means that $|f(t)| \leq |g(t)|$ for $\mu$-almost all $t \in \Omega$.

If a~Banach space $X\subset L^{0}(\mu)$ contains an element which is strictly positive $\mu$-a.e.~on $\Omega$
and $X$ is solid (meaning that $f \in X$ with $\|f\|_X \leq \|g\|_X$ whenever $|f| \leq |g|$ with
$f\in L^{0}(\mu)$ and $g\in X$), then $X$ is said to be a~\emph{Banach lattice} on $(\Omega, \Sigma, \mu)$).
A~Banach lattice $X$ is said to have the \emph{Fatou property}, if for any sequence $\{f_n\}$ of non-negative
elements from $X$ such that $f_n\uparrow f$ for $f\in L^0(\Omega)$ and
$\sup \bigl\{\|f_n\|_{X};\, n\in \mathbb{N}\bigr\} <\infty$, one has $f\in X$ and $\|f_n\|_X \uparrow \|f\|_X$.

Let $X$ and $Y$ be Banach lattices. A linear operator $T\colon X \to Y$ is said to be \emph{positive} (resp., homomorphism)
if $Tx\geq 0$ whenever $x\geq 0$ (resp., $Tx \wedge Ty = 0$ whenever $x\wedge y =0$). A~homomorphism which is additionally
a~bijection is called an \emph{order isomorphism}. It is well known that a~linear bijection $T\colon X \to Y$ is
an order isomorphism if and only if $T$ and $T^{-1}$ are both positive (see \cite[Theorem 7.3]{AB}).

This section elaborates on an unpublished result of Milman \cite{Mi} on a~strong variant of Shnieberg result that states
that, under some mild conditions, and in the context of Banach lattices, invertibility of a~bounded positive operator
at one point of the scale of Calder\'on space for Banach function lattices implies invertibility at all points in the
interior scale. Combining with our previous results we obtain a~variant of this result for the classical real interpolation
spaces between Banach lattices.

We recall that the \emph{Calder\'on product} $X_{0}^{1-\theta}X_{1}^{\theta}$ defined for any couple $(X_0, X_1)$ of Banach lattices on
a~measure space $(\Omega, \Sigma, \mu)$  consists of all $f\in L^{0}(\mu)$ such that $|f| \leq \lambda \,|f_0|^{1-\theta}
|f_1|^{\theta}$ $\mu$-a.e.\ for some $\lambda > 0$ and $f_j \in X_j$ with $\|f_j\|_{X_j} \leq 1$, $j\in\{0, 1\}$. It is well
known (see \cite{Ca1}) that $X_{0}^{1-\theta}X_{1}^{\theta}$ is a~Banach lattice endowed with the norm
\[
\|f\| = \inf \big\{\lambda >0; \,|f| \leq \lambda
\,|f_0|^{1-\theta} |f_1|^{\theta} ,\, \|f_0\|_{X_0} \leq 1, \,\|f_1\|_{X_1} \leq 1\big\}.
\]
In what follows for simplicity of notation, we also write for short $X_{\theta}$ instead of $X_0^{1-\theta}X_1^{\theta}$.

For the reader's convenience, we include the proof of the mentioned above result.

\vspace{ 2 mm}

\begin{theorem}
\label{coneemb} Let $T\colon(X_{0}, X_{1}) \to(Y_{0}, Y_{1})$ be a positive operator between couples of Banach lattices
with the Fatou property. Assume that $T\colon X_{0}^{1-\theta_{0}}X_{1}^{\theta_{0}}\to Y_{0}^{1-\theta_{0}}Y_{1}^
{\theta_{0}}$ is an order isomorphism for some $\theta_{0} \in(0, 1)$. Then $T\colon X_{0}^{1-\theta_{1}}X_{1}^{\theta_{1}}
\to Y_{0}^{1-\theta_{1}}Y_{1}^{\theta_{1}}$ is an order isomorphism for all $\theta_{1} \in(0, 1)$.
\end{theorem}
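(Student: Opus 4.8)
The plan is to reduce the statement to an invertibility statement for the corresponding complex interpolation functors and then invoke Theorem \ref{complexcompatibility} together with the Calder\'on--Lozanovskii identification of the Calder\'on product with the complex interpolation space. First I would recall that for a couple $(X_0,X_1)$ of Banach lattices with the Fatou property one has the Calder\'on identity $X_0^{1-\theta}X_1^{\theta} \cong [X_0,X_1]_{\theta}$ with equality of norms (this is Calder\'on's theorem, stated in \cite{Ca1}); the Fatou property is exactly what guarantees this coincidence. Thus the hypothesis says precisely that $T\colon [X_0,X_1]_{\theta_0} \to [Y_0,Y_1]_{\theta_0}$ is a bounded linear bijection which is moreover an order isomorphism, i.e. $T$ and $T^{-1}$ are both positive on the $\theta_0$-level.

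Next I would use Theorem \ref{complexcompatibility} (or more directly Theorem \ref{uinverse}/Corollary \ref{stableex}) applied to the family $\{[\,\cdot\,]_\theta\}_{\theta\in(0,1)}$ of complex interpolation functors. Invertibility of $T$ at $\theta_0$ gives an open interval $I\subset(0,1)$ containing $\theta_0$ on which $T_\theta$ is invertible, and the compatibility theorem tells us that the inverses $T^{-1}_{\theta}$ agree on intersections $[\yo]_{\theta}\cap[\yo]_{\theta'}$ for $\theta,\theta'\in I$. To promote invertibility from a neighbourhood of $\theta_0$ to \emph{all} of $(0,1)$ one needs a positivity argument: this is exactly the content of Milman's strengthening of Shnieberg's theorem. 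The mechanism is that once $T$ is a \emph{positive} operator whose inverse on one level is also positive, the inverse $T^{-1}$ (which, by compatibility, is a single well-defined positive operator on $[\yo]_{\theta_0}+[\yo]_{\theta}$ for $\theta$ near $\theta_0$) is again an operator of couples $(Y_0,Y_1)\to(X_0,X_1)$ — indeed one shows $T^{-1}$ extends to a bounded positive operator $Y_0+Y_1 \to X_0+X_1$ mapping $Y_j\to X_j$, using the Fatou property and a lattice bootstrap (monotone limits) to control norms. Once $T^{-1}$ is genuinely an operator between the couples, applying the complex interpolation functor $[\,\cdot\,]_{\theta_1}$ to both $T$ and $T^{-1}$ shows $T\colon [X_0,X_1]_{\theta_1}\to[Y_0,Y_1]_{\theta_1}$ is invertible for every $\theta_1\in(0,1)$, with positive inverse, hence an order isomorphism (by \cite[Theorem 7.3]{AB}).

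Concretely, the key steps in order: (1) invoke Calder\'on's identity $X_\theta\cong[X_0,X_1]_\theta$ (and likewise for $Y$) to translate the hypothesis and the conclusion into the language of complex interpolation; (2) apply the stability/compatibility results of Section 4 (Theorem \ref{complexcompatibility}) to obtain a neighbourhood $I$ of $\theta_0$ of invertibility with compatible inverses; (3) show that the positivity of $T$ and of $T^{-1}$ at level $\theta_0$, together with the Fatou property, forces $T^{-1}$ to be a bounded positive operator of the couples $(Y_0,Y_1)\to(X_0,X_1)$ — the heart of the Shnieberg--Milman argument, where one uses that the $K$-functional of $T^{-1}y$ is controlled via a lattice decomposition and the Fatou property upgrades this to membership in $Y_j$; (4) having $T^{-1}\colon (Y_0,Y_1)\to(X_0,X_1)$, apply $[\,\cdot\,]_{\theta_1}$ for arbitrary $\theta_1$ and use Proposition \ref{kernel}(iii) to conclude invertibility, hence (by positivity of both $T$ and $T^{-1}$) that $T$ is an order isomorphism at every $\theta_1$.

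The main obstacle is step (3): transferring positivity of the inverse at a single value of $\theta$ into the statement that $T^{-1}$ is bounded \emph{between the endpoint couples} $Y_j\to X_j$. This is where the Fatou property and the lattice structure are essential and where the genuinely new analytic content lies; the interpolation-theoretic steps (1), (2), (4) are then essentially formal given the earlier results of the paper. I would expect to handle step (3) by the classical Shnieberg-type argument: write an element of $Y_j$ as a suitable limit of elements in $Y_0\cap Y_1$ dominated in the Calder\'on-product norm, apply the (compatible, positive) inverse, and use monotone convergence together with the Fatou property of $X_j$ to recover the bound $\|T^{-1}y\|_{X_j}\lesssim \|y\|_{Y_j}$.
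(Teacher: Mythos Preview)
Your proposed route is quite different from the paper's, and it has a real gap at the step you yourself flag as the ``main obstacle.''

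The paper does \emph{not} pass through complex interpolation or through any of the compatibility machinery of Sections 3--4. Instead it gives a direct lattice-theoretic argument: assuming (after normalization) a lower bound $C\|f\|_{X_{\theta_0}}\le\|Tf\|_{Y_{\theta_0}}$ for $f\ge 0$, it combines the Cwikel--Nilsson extrapolation formula
\[
\|f\|_{E_0^{1-\theta}E_1^{\theta}} = \sup_{\|g\|_{E_0}\le 1}\big\|\,|g|^{1-\alpha}|f|^{\alpha}\big\|_{E_0^{1-\alpha}(E_0^{1-\theta}E_1^{\theta})^{\alpha}}^{1/\alpha}
\]
with the pointwise concavity inequality $P(x^{1-\theta}y^{\theta})\le P(x)^{1-\theta}P(y)^{\theta}$ valid for any positive operator $P$, together with the reiteration identity $(X_{\theta_0})^{1-\alpha}(X_{\theta_1})^{\alpha}=X_{(1-\alpha)\theta_0+\alpha\theta_1}$, to propagate the lower bound from $\theta_0$ to any $\theta_1\in(0,1)$ in one stroke. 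No open-interval/continuation argument is needed, and the complex method never appears.

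Your plan has two concrete problems. First, step~(1) is not available under the stated hypotheses: the identification $X_0^{1-\theta}X_1^{\theta}\cong[X_0,X_1]_{\theta}$ does not follow from the Fatou property alone; one generally only gets $[X_0,X_1]_{\theta}=(X_0^{1-\theta}X_1^{\theta})^{\circ}$, and the paper itself, in the theorem immediately following this one, imposes \emph{regularity} of the couples and uses the Gustavsson--Peetre functor $\langle\cdot,\cdot\rangle_\theta$ precisely to get such an identification. Second, and more seriously, step~(3) is both stronger than the theorem asserts and not substantiated: you are claiming that positivity plus invertibility on an open interval forces $T^{-1}$ to extend to a bounded operator $Y_j\to X_j$ on the \emph{endpoint} spaces. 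The theorem only concerns interior $\theta_1\in(0,1)$, and the sketch you give (approximate, apply the inverse, pass to monotone limits via Fatou) does not produce an endpoint bound; there is no mechanism in what you wrote that controls $\|T^{-1}y\|_{X_j}$ in terms of $\|y\|_{Y_j}$. Without step~(3) your argument stalls at a neighbourhood of $\theta_0$ and never reaches an arbitrary $\theta_1$. The paper's extrapolation-formula argument bypasses all of this.
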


\vspace{2 mm}

\begin{proof}
Notice that for any couple $(E_0, E_!)$ of Banach lattices with the Fatou property and for every $\theta\in(0, 1)$,
$E_0^{1-\theta}E_1^{\theta}$ is a~Banach lattice with the Fatou property (see \cite{Lo78}). Thus by use of
extrapolation formula of Cwikel--Nilsson \cite[Theorem 3.5]{CN}, we have
\[
\|f\|_{E_0^{1-\theta}E_1^{\theta}} = \sup\Big\{\big\||g|^{1-\alpha}|f|^{\alpha}\big\|_{E_{0}^{1-\alpha}
(E_0^{1-\theta}E_1^{\theta})^{\alpha}}^{1/\alpha};\, \|g\|_{E_0} \leq1 \Big\}, \quad\ \alpha, \theta\in(0, 1).
\]
Since Calder\'on construction is an interpolation method for positive operators,
\[
T\colon X_0^{1-\theta}X_1^{\theta}\to Y_0^{1-\theta}Y_1^{\theta}, \quad\, \theta\in(0, 1).
\]
Suppose that for some $C>0$ and all $f \geq0$ in $X_{\theta_0}$ we have
\[
\tag {$*$} C \|f\|_{X_{\theta_0}} \leq \|Tf\|_{Y_{\theta_0}}.
\]
We will use the following easily verified reiteration formula true for an arbitrary couple of Banach lattices,
which is true for all $\alpha$, $\theta_0$ and $\theta_1$ in $[0,1]$
\[
(X_0^{1-\theta_0}X_1^{\theta_0})^{1-\alpha}(X_0^{1-\theta_1}X_1^{\theta_1})^{\alpha}
= X_0^{1-\beta}X_1^{\beta},
\]
where $\beta= (1- \alpha)\theta_0 + \alpha\theta_1$. We also require the following property of
any positive operator $P\colon X \to Y$ between Banach lattices which says that if
$0\leq x, y\in X$ and $\theta\in(0, 1)$, then (see, e.g., \cite[p.~55]{LT})
\[
P(x^{1-\theta} y^{\theta}) \leq P(x)^{1- \theta} P(y)^{\theta}.
\]
We may assume without loss of generality that $\|T\|_{X_j \to Y_j} \leq1$ for $j\in \{0, 1\}$ and also that
$0<\theta_0 <\theta_1<1$. Thus, we can find $\alpha\in(0, 1)$ such that $\theta_0 = \alpha\theta_1$.
Suppose $f\in X_{\theta_1}$ is nonnegative. Combining Cwikel--Nilsson formula shown above with
the mentioned property of positive operators and our hypothesis we obtain
\begin{align*}
\|Tf \|_{Y_{\theta_1}} &=
\sup\Big\{\big\|\,|g|^{1-\alpha}(Tf)^{\alpha}\big\|_{Y_{\theta_0}}^{1/\alpha}\,;\,\, \|g\|_{Y_0} \leq1\Big\} \\
& \geq\sup\Big\{\big\|(T|x_0|)^{1-\alpha}(Tf)^{\alpha}\big\|_{Y_{\theta_0}}^{1/\alpha}\,; \,\, \|x_0\|_{X_0} \leq1\Big\} \\
& \geq\sup\Big\{\big\|T(|x_0|^{1- \alpha}f^{\alpha})\big\|_{Y_{\theta_0}}^{1/\alpha}; \,\, \|x_0\|_{X_0} \leq 1\Big\}.
\end{align*}
In consequence, our hypothesis $(*)$ on $T$ and the mentioned extrapolation formula yield the required estimate
\begin{align*}
\big\|Tf\big\|_{Y_{\theta_1}} & \geq C^{1/\alpha}\, \sup\Big\{\big\||x_0|^{1- \alpha}
f^{\alpha}\big\|_{Y_{\theta_0}}^{1/\alpha};\, \|x_0\|_{X_0} \leq1\Big\} \\
& = C^{1/\alpha}\, \big\|f\big\|_{X_{\theta_1}}
\end{align*}
and this completes the proof.
\end{proof}

In the sequel when the complex methods are applied to a~couple $(X_0, X_1)$ of Banach lattices, we
mean that $X_j:=X_j(\mathbb{C})$ is a~\emph{complexification} of $X_j$ for $j\in \{0, 1\}$. If $X$ is
an intermediate Banach space with respect to a~couple $\xo=(X_0, X_1)$, we let $X^{\circ}$ be the
closed hull of $X_0 \cap X_1$ in $X$.

We conclude with the following result.

\begin{theorem}
Let $\xo=(X_0, X_1)$ and $\yo = (Y_0, Y_1)$ be couples of regular Banach lattices with the Fatou property
and let $T\colon X_0 + X_1\to Y_0 + Y_1$ be a positive operator. If \,$T\colon X_0^{1-\theta_{*}} X_1^{1-\theta_{*}}
\to  Y_0^{1-\theta_{*}} Y_1^{\theta_{*}}$ is an order isomorphism for some $\theta_{*} \in (0, 1)$, then
\[
T\colon X_{0}^{1-\theta} X_{1}^{\theta} \to  Y_{0}^{1-\theta} Y_{1}^{\theta}, \quad\,
T\colon (X_0, X_1)_{\theta, p} \to  (Y_0, Y_1)_{\theta, p}
\]
are order isomorphisms for all $\theta \in (0, 1)$, $p\in [1, \infty]$.
\end{theorem}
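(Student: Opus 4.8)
The plan is to feed the Calder\'on-product form of the assertion --- which is essentially Theorem~\ref{coneemb} --- through the complex-to-real transfer supplied by Theorem~\ref{ComplexReal}, and then to promote ``invertible'' to ``order isomorphism'' with the help of positivity and the Fatou property. I would begin by noting that $T$ acts as an operator of couples $T\colon (X_0,X_1)\to(Y_0,Y_1)$: a positive operator between Banach lattices is bounded, and the Fatou property of $Y_0,Y_1$, together with the hypothesis that $T$ is an order isomorphism at $\theta_{*}$, places us in the setting of Theorem~\ref{coneemb}. Applying that theorem with $\theta_{*}$ in the role of its parameter $\theta_0$ yields at once that $T\colon X_0^{1-\theta}X_1^{\theta}\to Y_0^{1-\theta}Y_1^{\theta}$ is an order isomorphism for every $\theta\in(0,1)$, which is the first half of the conclusion.

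Next I would pass to the real scale. By Calder\'on's description of complex interpolation of Banach function lattices \cite{Ca1}, for a couple of regular lattices the space $[X_0,X_1]_{\theta}$ is a closed ideal of --- in the order-continuous case, all of --- the Calder\'on product $X_0^{1-\theta}X_1^{\theta}$; since $T$ and $T^{-1}$ map $X_0\cap X_1$ into $Y_0\cap Y_1$ and back, the order isomorphism just obtained restricts to an isomorphism $T\colon[X_0,X_1]_{\theta}\to[Y_0,Y_1]_{\theta}$ for every $\theta\in(0,1)$. Hence $T$ is invertible at each point of the complex scale, and Theorem~\ref{ComplexReal} gives immediately that $T\colon (X_0,X_1)_{\theta,p}\to(Y_0,Y_1)_{\theta,p}$ is invertible for all $\theta\in(0,1)$ and all $p\in[1,\infty]$. (One could instead bypass the complex method altogether and apply the argument of Theorem~\ref{maintheorem} directly to the Calder\'on scale $\{X_0^{1-\theta}X_1^{\theta}\}_{\theta}$, which for couples of Fatou lattices is readily seen to satisfy the reiteration and global $(\Delta)$-conditions, and then reiterate into the real scale.)

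It remains to upgrade invertibility to an order isomorphism. Since the real interpolation spaces of a couple of Banach lattices are again Banach lattices and $T$ is positive, $T$ is positive between $(X_0,X_1)_{\theta,p}$ and $(Y_0,Y_1)_{\theta,p}$, so what has to be shown is $T^{-1}\ge 0$. For $0\le y\in Y_0\cap Y_1$, the element $T^{-1}y$ coincides with the image of $y$ under the positive inverse $T^{-1}\colon[Y_0,Y_1]_{\theta}\to[X_0,X_1]_{\theta}$, since all the inverses in play are mutually compatible (by the results of Section~4 used above); hence $T^{-1}y\ge 0$. When $p<\infty$, $Y_0\cap Y_1$ is dense in $(Y_0,Y_1)_{\theta,p}$ (regularity of the couple and of the functor), so its positive cone is dense in the positive cone of $(Y_0,Y_1)_{\theta,p}$, and closedness of the cone together with boundedness of $T^{-1}$ forces $T^{-1}\ge 0$ everywhere. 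When $p=\infty$ one approximates a given $0\le y\in(Y_0,Y_1)_{\theta,\infty}$ from below by $y_n:=y\wedge(nv)\in Y_0\cap Y_1$, where $v$ is a strictly positive element of $Y_0\cap Y_1$; the sequence $T^{-1}y_n\ge 0$ is increasing and norm-bounded in $(X_0,X_1)_{\theta,\infty}$, so by the Fatou property it increases to an element of $(X_0,X_1)_{\theta,\infty}$ which, by invertibility of $T$ on that space, must equal $T^{-1}y$; thus $T^{-1}y\ge 0$.

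The step I expect to be most delicate is the middle one: making precise, for the class of regular lattices with the Fatou property, the interplay among the Calder\'on products $X_0^{1-\theta}X_1^{\theta}$, the complex spaces $[X_0,X_1]_{\theta}$ and the real spaces $(X_0,X_1)_{\theta,p}$, so that the compatibility machinery (Theorems~\ref{complexcompatibility} and \ref{ComplexReal}) genuinely applies --- together with the preliminary verification that a positive operator on $X_0+X_1$ is in fact an operator of couples, which is what licenses the use of Theorem~\ref{coneemb}. The analytic core, the extrapolation and reiteration estimates, is inherited in its entirety from Theorems~\ref{coneemb}, \ref{complexcompatibility} and \ref{ComplexReal}.
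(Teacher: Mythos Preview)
Your overall strategy coincides with the paper's: both begin by applying Theorem~\ref{coneemb} to propagate the order isomorphism across the entire Calder\'on scale $\{X_0^{1-\theta}X_1^\theta\}_{\theta\in(0,1)}$, then identify the Calder\'on product with a classical interpolation functor, and finally pass to the real scale. The difference lies in the bridging functor. The paper uses the Gustavsson--Peetre $\pm$~method: from regularity it asserts that $X_0\cap X_1$ is dense in each $X_0^{1-\theta}X_1^\theta$, whence by Nilsson \cite{Nilsson} one has $\langle X_0,X_1\rangle_\theta = (X_0^{1-\theta}X_1^\theta)^\circ = X_0^{1-\theta}X_1^\theta$; it then concludes by invoking Theorem~\ref{realstable}. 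You instead bridge through the lower complex method $[\,\cdot\,]_\theta$ and invoke Theorem~\ref{ComplexReal}. Both routes are legitimate once the density of $X_0\cap X_1$ in the Calder\'on product is in hand, since Calder\'on's identification $[X_0,X_1]_\theta=\overline{X_0\cap X_1}^{\,X_0^{1-\theta}X_1^\theta}$ then yields the full equality as well.

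One point to tighten: your hedge that $[X_0,X_1]_\theta$ is merely ``a closed ideal of --- in the order-continuous case, all of ---'' the Calder\'on product, followed by the claim that the order isomorphism \emph{restricts}, is problematic. The restriction argument would need $T^{-1}$ to carry $Y_0\cap Y_1$ back into $X_0\cap X_1$, and nothing in the hypotheses provides that. The clean fix is exactly what the paper does: invoke regularity to get density, hence equality $[X_0,X_1]_\theta = X_0^{1-\theta}X_1^\theta$, and then no restriction is needed at all. With that adjustment your argument goes through; your explicit treatment of why $T^{-1}\ge 0$ on the real spaces (via density for $p<\infty$ and a Fatou/monotone-approximation argument for $p=\infty$) is in fact more detailed than what the paper supplies.
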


\begin{proof}
Since the couples are regular, we have that $X_0 \cap X_1$ is dense in $X_{\alpha}$ and $Y_0\cap Y_1$ is dense
in $Y_{\alpha}$ for all $\alpha \in (0, 1)$. Thus, it follows from \cite{Nilsson} that the following formulas
hold within equivalence of norms
\[
\langle X_0, X_1\rangle_{\theta_{*}} = \big(X_0^{1-\theta_{*}} X_1^{\theta_{*}}\big)^{\circ}
= X_0^{1-\theta_{*}} X_1^{\theta_{*}},
\]
and similarly,
\[
\langle Y_0, Y_1\rangle_{\theta_{*}} = Y_0^{1-\theta_0} Y_1^{\theta_{*}}.
\]
Thus, by Theorem \ref{coneemb}, we deduce that
\[
T\colon \langle X_0, X_1\rangle_{\theta} \to \langle Y_0, Y_1\rangle_{\theta}
\]
is an order isomorphism for all $\theta \in (0, 1)$ and so Theorem \ref{realstable} applies.
\end{proof}

\vspace{3 mm}

\noindent Department of Mathematics (MAI)\newline
Link\"{o}ping University, Sweden \newline
E-mail: \texttt{irina.asekritova@liu.se} \newline

\noindent Department of Mathematics (MAI)\newline
Link\"{o}ping University, Sweden \newline
E-mail: \texttt{natan.kruglyak@liu.se}\newline

\noindent Faculty of Mathematics and Computer Science\newline
Adam~Mickiewicz University in Pozna\'n \newline
Uniwersytetu Pozna{\'n}skiego 4\newline
61-614 Pozna{\'n}, Poland\newline
E-mail: \texttt{mastylo$@$math.amu.edu.pl}

\begin{thebibliography}{99}

\bibitem{AM} E.~Albrecht and V.~M\"uller, \emph{Spectrum of interpolated operators},
Proc. Amer. Math. Soc. \textbf{129}~(2001), no.~3, 807–-814.

\bibitem{AB}
C.~Aliprantis and O.~Burkinshaw, \emph{Positive operators}, Pure and Applied Mathematics, 119.
Academic Press, Inc., Orlando, FL, 1985.

\bibitem{BL}
J.~Bergh and J.~L\"ofstr\"om, \emph{Interpolation spaces.An Introduction}, Springer, Berlin 1976.

\bibitem{BK}
Y.~Brudnyi and N.~Kruglyak, \emph{Interpolation functors and interpolation spaces}, Volume 1
North-Holland, Amsterdam 1991.

\bibitem{Ca1}
A.\,P.~Calder\'on, \emph{Intermediate spaces and interpolation, the complex method}, Studia Math.
\textbf{24}~(1964), 113–-190.

\bibitem{Ca2}
A.\,P.~Calder\'on, \emph{Boundary value problems for the Laplace equation in Lipschitzian domains,}
Recent progress in Fourier analysis (El Escorial, 1983), 33-–48, North-Holland Math. Stud., 111,
Notas Mat., 101, North-Holland, Amsterdam, 1985.

\bibitem{Cwikel}
M.~Cwikel, \emph{Complex interpolation spaces, a discrete definition and reiteration}, Indiana Univ. Math. J.
\textbf{27}~(1978), no.~6,
1005-–1009.

\bibitem{CKMR}
M.~Cwikel, N.~Kalton, M.~Milman and R.~Rochberg, \emph{A unified theory of commutator estimates for a~class
of interpolation methods}, Adv. Math. \textbf{169}~(2002), no.~2, 241–-312.

\bibitem{CN}
M.~Cwikel and Per G.~Nilsson, \emph{Interpolation of weighted Banach lattices}, Memoirs Amer.
Math. Soc. Vol. 165, No 785, 2003.

\bibitem{GP} J.~Gustavsson and J.~Peetre, \emph{Interpolation of Orlicz
spaces}, Studia Math. \textbf{60}~(1977), 33-–59.


\bibitem{Ivtsan}
A.~Ivtsan, \emph{Stafney's lemma holds for several "classical'' interpolation methods}, Proc. Amer.
Math. Soc. \textbf{140}~(2012), no.~3, 881-–889.

\bibitem{Janson}
S.~Janson, \emph{Minimal and maximal methods of interpolation},
J. Funct. Anal. \textbf{44}~(1981), no.~1, 50-–73.

\bibitem{KMM} N.~Kalton, S.~Mayboroda and M.~Mitrea, \emph{Interpolation of Hardy-Sobolev-Besov-Triebel-Lizorkin
spaces and applications to problems in partial differential equations}, Interpolation theory and applications,
121-–177, Contemp. Math., 445, Amer. Math. Soc., Providence, RI, 2007

\bibitem{Kato}
T.~Kato, \emph{Perturbation theory for nullity, deficiency and other quantities
of linear operators}, J. Anal. Math. \textbf{6}~(1958), 261--322.

\bibitem{KM}
N.~Krugljak and M.~Milman, \emph{A distance between orbits that controls commutator
estimates and invertibility of of operators}, Advances in Math. \textbf{182}~(2004),
78-–123.

\bibitem{LT}
J.~Lindenstrauss and L.~Tzafriri, \emph{Classical Banach spaces II}, Springer-Verlag, Berlin, 1979.

\bibitem{Lions}
J.\,L.~Lions, \emph{Sur les espaces d'interpolation; dualit\'e}, Math. Scand. \textbf{9}~(1961), 147--177.

\bibitem{LP}
J.\,L.~Lions and J.~Peetre, \emph{Sur une classe d'espaces d'interpolation}, Inst. Hautes Etudes Sci.
Publ. Math. \textbf{19}~(1964), 5-–68.

\bibitem{Lo78}
G.\,Ya.~Lozanovski{\u\i}, \emph{Transformations of ideal Banach spaces by means of concave functions},
In: Qualitative and approximate methods for investigation of operator equations, Jaroslavl' 1978,
122--148 (Russian).


\bibitem{Mi} M.~Milman, private communication.

\bibitem{Nilsson}
P.~Nilsson, \emph{Interpolation of Banach lattices}, Studia Math. \textbf{82}~(1985),
no.~2, 135-–154.

\bibitem{Peetre}
J.~Peetre, \emph{Sur l'utilization des suites inconditionellement sommables dans la
th\'eorie des espaces d'interpolation}, Rend. Sem. Mat. Univ. Padova \textbf{46}~(1971),
173-–190.

\bibitem{PV}
J.~Pipher and G.~Verchota, \emph{The Dirichlet problem in $L^p$ for the biharmonic equation
on Lipschitz domains}, Amer. J. Math. \textbf{114}~(1992), no.~5, 923-–972.

\bibitem{Ransford}
T.\,J.~Ransford, \emph{The spectrum of an interpolated operator and analytic multivalued
functions}, Pacific J. Math. \textbf{121}~(1986), no.~2, 445-–466.


\bibitem{Sh}
I.\,Ja~Shneiberg, \emph{Spectral properties of linear operators in interpolation families of
Banach spaces}, Math. Issled. \textbf{9}~(1974), 214--229 (Russian).


\bibitem{Zafran}
M.~Zafran, \emph{Spectral theory and interpolation of operators}, J. Funct. Anal. \textbf{36}~(1980),
no.~2, 185-–204.
\end{thebibliography}
\end{document}